\declaretheoremstyle[headfont=\normalfont]{normalhead}
\newtheorem{lemma}{Lemma}[section]
\newtheorem{theorem}[lemma]{Theorem}
\newtheorem{proposition}[lemma]{Proposition}
\newtheorem{corollary}[lemma]{Corollary}
\newtheorem{definition}[lemma]{Definition}
\newtheorem{remark}[lemma]{Remark}
\newtheorem{maintheorem}{Theorem}
\newcommand{\R}{\mathbb{R}}
\DeclareMathOperator{\sign}{sign}
\DeclareMathOperator{\Val}{Val}
\DeclareMathOperator{\VConv}{VConv}
\DeclareMathOperator{\Conv}{Conv}
\DeclareMathOperator{\vol}{vol}
\DeclareMathOperator{\vsupp}{v-supp}
\DeclareMathOperator{\supp}{supp}
\DeclareMathOperator{\epi}{epi}
\DeclareMathOperator{\D}{\bar{D}}
\DeclareMathOperator{\GL}{GL}
\DeclareMathOperator{\CNC}{N^*}
\DeclareMathOperator{\GW}{GW}
\author{Jonas Knoerr}
\title{Smooth valuations on convex functions}
\date{}
\newcommand{\Addresses}{{
		\bigskip
		\footnotesize
		
		Jonas Knoerr, \textsc{Institute of Discrete Mathematics and Geometry, Technical University Vienna, Wiedner Hauptstrasse 8-10, 1040 Wien, Austria}\par\nopagebreak
		\textit{E-mail address}: \texttt{jonas.knoerr@tuwien.ac.at}
		
		\medskip
}}
\def\blfootnote{\xdef\@thefnmark{}\@footnotetext}
\begin{document}
\maketitle
\begin{abstract}
	We construct valuations on the space of finite-valued convex functions using integration of differential forms over the differential cycle associated to a convex function. We describe the kernel of this procedure and show that the intersection of this space of \emph{smooth} valuations with the space of all continuous dually epi-translation invariant valuations on convex functions is dense in the latter. As an application, we obtain a description of $1$-homogeneous, continuous, dually epi-translation invariant valuations that are invariant with respect to a compact subgroup operating transitively on the unit sphere.
\end{abstract}
\blfootnote{2020 \emph{Mathematics Subject Classification}. 52B45, 26B25, 53C65.\\
	\emph{Key words and phrases}. Convex function, valuation on functions, differential cycle.\\
	Partially supported by DFG grant BE 2484/5-2.}
\tableofcontents

\section{Introduction}
\subsection{General background}
	Let $V$ be a finite-dimensional real vector space of dimension $n$ and let $\mathcal{K}(V)$ denote the space of convex bodies, i.e. the set of all non-empty, compact, convex subsets in $V$, which is a complete, locally compact metric space with respect to the Hausdorff metric. A functional $\mu:\mathcal{K}(V)\rightarrow \R$ is called a valuation if
	\begin{align*}
		\mu(K\cup L)+\mu(K\cap L)=\mu(K)+\mu(L)
	\end{align*}
	whenever $K,L,K\cup L\in\mathcal{K}(V)$. Examples of valuations include the Euler characteristic, the intrinsic volumes as well as mixed volumes, and valuations thus play an prominent role in complex geometry. The theory of continuous translation invariant valuations is especially rich. Following the groundbreaking results of Alesker \cite{Alesker:IrreducibilityThm}, many algebraic structures were discovered on this space \cite{Alesker:Product_polynomial,Alesker:Fourier_transform,Bernig_Fu:Convolution}, which are closely related to Integral geometric formulas, see for example \cite{Bernig_Fu:Hermitian_integral_geometry,Bernig_Fu_Solanes:complex_space_forms,Wannerer:module_unitary_area_measures}. Many of these results rely on different descriptions of the dense subspace of \emph{smooth valuations}, which may for example be represented by integrating certain translation invariant differential forms over the conormal cycle associated to a convex body. This also lead to an extension of the theory of valuations to valuations on manifolds \cite{Alesker:Valuations_on_manifolds_1,Alesker:Valuations_on_manifolds_2,Alesker_Fu:Valuations_on_manifolds3,Bernig_Broecker:Valuations_manifolds_Rumin_cohomology}.\\
	
	These advances in geometric valuation theory sparked interest in a corresponding theory for valuations on functions. Let $X$ be a class of real valued functions. A functional $\mu:X\rightarrow  \R$ is called a valuation if
	\begin{align*}
		\mu(f\vee h)+\mu(f\wedge h)=\mu(f)+\mu(h)
	\end{align*}
	for all $f,h\in X$ such that the pointwise maximum $f\vee h$ and minimum $f\wedge h$ belong to $X$.\\
	Assuming that the functions in $X$ are defined on some set $A$, the \emph{epi-graph} of $f\in X$ is given by $\epi(f):=\{(a,t)\in A\times\R: f(a)\le t\}$. Then 
	\begin{align*}
		&\epi(f\vee h)=\epi(f)\cap \epi(h), &&\epi(f\wedge h)=\epi(f)\cup \epi(h)
	\end{align*}
	for all $f,h\in X$. In this sense, a valuation on functions is a valuation on epi-graphs.\\
	In recent years, valuations on a variety of well-known classes of functions have been studied and classified, including Sobolev-spaces \cite{Ludwig:Fisher_information_valuations,Ludwig:valuations_sobolev,Ma:valuations_sobolev}, $\mathrm{L}^p$-spaces  \cite{Ludwig:covariance_matrices_valuations,Ober:Minkowski_valuations_on_Lq_spaces,Tsang:valuations_Lp,Tsang:minkowski_val_Lp}, quasi-concave functions \cite{Bobkov_Colesanti:quermassintegrals_quasi-concave_functions,Colesanti_Lombardi:valuations_quasi-concave,Colesanti_Lombardi_Parapatits:translation_invariant_valuations_quasi-concave}, Orlicz-spaces \cite{Kone:valuations_orlicz}, functions of bounded variation \cite{Wang:semivaluations_bounded_variations} and convex functions \cite{Alesker:valuations_convex_functions_Monge-Ampere,Cavallina_Colesanti:monotone_valuations_convex_functions,Colesanti_Ludwig_Mussnig:Hessian_valuations,Colesanti_Ludwig_Mussnig:minkowski,Colesanti_Ludwig_Mussnig:Valuations_convex_functions,Colesanti_Ludwig_Mussnig:homogeneous_decomposition,Knoerr:support_of_dually_epi-translation_invariant_valuations,Mussnig:SLn_invariant_super_coercive,Mussnig:volume_polar_volume_euler}.\\
	We will be interested in valuations on the space $\Conv(V,\R):=\{f:V\rightarrow\R: f\text{ convex}\}$, which is a closed subspace of the space of continuous functions on $V$ with respect to the topology of uniform convergence on compact subsets and thus a metrizable topological space. More precisely, let $\VConv(V)$ denote the space of all continuous valuations $\mu:\Conv(V,\R)\rightarrow \R$ that are \emph{dually epi-translation invariant}, i.e. that satisfy
		\begin{align*}
			\mu(f+\lambda+c)=\mu(f)\quad \text{for all }f\in\Conv(V,\R), \lambda\in V^*, c\in\R.
		\end{align*}
	This invariance property is intimately tied to translation invariance: For a convex function $f:V\rightarrow\R$, the Legendre transform of $f$ is given by
	\begin{align*}
		f^*(y):=\sup\limits_{x\in V}\langle y,x\rangle -f (x)\quad \text{for }y\in V^*,
	\end{align*}
	which defines a convex but not necessarily finite-valued function on $V^*$. The invariance properties above correspond to the invariance of the valuation with respect to translations of the epi-graph $\epi(f^*)$ in $V^*\times\R$. As a consequence, the classical McMullen decomposition for continuous translation invariant valuations on convex bodies induces a homogeneous decomposition of $\VConv(V)$, as shown by Colesanti, Ludwig and Mussnig in \cite{Colesanti_Ludwig_Mussnig:homogeneous_decomposition} (see also \cite{Knoerr:support_of_dually_epi-translation_invariant_valuations}): If $\VConv_k(V)$ denote the space of $k$-homogeneous valuations, i.e. the space of all valuations $\mu\in\VConv(V)$ that satisfy $\mu(tf)=t^k\mu(f)$ for all $t\ge 0$, $f\in\Conv(V,\R)$, then
	\begin{align*}
		\VConv(V)=\bigoplus\limits_{k=0}^n\VConv_k(V).
	\end{align*}
	The first examples of such valuations were constructed by Alesker in  \cite{Alesker:valuations_convex_functions_Monge-Ampere} using Monge-Amp\'ere-type operators (although he used the same idea before in \cite{Alesker:valuationss_non-comm_det_and_pluripotential} to construct certain invariant valuations on convex bodies). He showed that for every $B\in C_c(\R^n)$, $A_1,\dots,A_{n-k}\in C_c(\R^n,\mathrm{Sym}^2(\R^n))$ there exists a unique continuous valuation $\mu:\Conv(\R^n,\R)\rightarrow\R$ such that
		\begin{align*}
		\mu(f)=\int_{\R^n}B(x)\det(D^2f(x)[k],A_1(x),\dots,A_{n-k}(x))dx 
		\end{align*}
		for all $f\in\Conv(\R^n,\R)\cap C^2(\R^n)$, where $\mathrm{Sym}^2(\R^n)$ denotes the space of symmetric $n\times n$ matrices, $\det:(\mathrm{Sym}^2(\R^n))^n\rightarrow\R$ is the mixed discriminant and the Hessian $D^2f$ of $f$ is taken with multiplicity $k$ in this expression.\\	
	Although he used the name $\VConv(V)$ for the larger space of valuations invariant with respect to the addition of linear functionals (but not constant functions), these examples are obviously dually epi-translation invariant. Thus the results of \cite{Alesker:valuations_convex_functions_Monge-Ampere} also apply to the smaller space considered in this article. \\
	
	Similar functionals were also considered in \cite{Colesanti_Ludwig_Mussnig:Hessian_valuations} to construct rotation invariant valuations on $\Conv(\R^n,\R)$ using certain measures on the tagent bundle of $\R^n$, called \emph{Hessian measures} (see \cite{Colesanti_Hug:Hessian_measures}). These ideas where thereafter used in \cite{Colesanti_Ludwig_Mussnig:Hadwiger_for_convex_functions} to give a complete characterization of all rotation invariant valuations in $\VConv(\R^n)$ in terms of so called singular Hessian valuations.
	
\subsection{Results of the present article}
	The examples presented in the previous section admit a very simple geometric interpretation, at least if the functions involved are sufficiently regular: For $f\in\Conv(V,\R)\cap C^2(V)$ we can consider the graph of the differential $df:V\rightarrow V^*$ as an $n$-dimensional $C^1$-submanifold of the cotangent bundle $\pi:T^*V\rightarrow V$. If $V$ is an oriented vector space, this submanifold carries a natural orientation, so we may integrate suitable differential forms over it. It is easy to see that the Hessian measures can be obtained using this construction, however the same applies to the valuations considered by Alesker.\\
	The goal of this article is a systematic study of this type of valuation, which, the author hopes to establish, play a similar role to smooth translation invariant valuations on convex bodies.\\ 
	
	Let us first discuss the construction in slightly more detail. For a non-smooth convex function, we will replace integration over the graph of $df$ by integration with respect to the \emph{differential cycle} $D(f)$ as defined by Fu \cite{Fu:Monge-Ampere_1} (see also \cite{Jerrard:some_rigidity_results_related_to_MA_functions} for an extension of this construction). This $n$-current is uniquely determined by certain properties one would expect from the graph of the differential of $f\in\Conv(V,\R)$ and depends in fact continuously on $f$ in a suitable sense, see Theorem \ref{theorem_continuity_D_on_convex_functions} below.\\
	Let $\Omega_{hc}^n(T^*V)\subset \Omega^n(T^*V)$ denote the space of differential $n$-forms with \emph{horizontally compact support}, i.e. $\tau\in \Omega^n_{hc}(T^*V)$ if and only if there exists a compact subset $K\subset V$ such that $\supp\tau\subset \pi^{-1}(K)$, where $\pi:T^*V\rightarrow V$ denotes the natural projection.	In Section \ref{section_properties_differential_cycle} we show that $f\mapsto D(f)[\tau]$ defines a continuous valuation on $\Conv(V,\R)$ and we will call such valuations \emph{smooth}.\\
	Note that such a valuation is always invariant with respect to the addition of constants but not necessarily with respect to the addition of linear functionals. Let $\VConv_k(V)^{sm}$ denote the subspace of $\VConv_k(V)$ consisting of smooth valuations. Let us equip $\VConv_k(V)$ with the topology of uniform convergence on compact subsets in $\Conv(V,\R)$ (see \cite{Knoerr:support_of_dually_epi-translation_invariant_valuations} Proposition 2.4 for a description of these subsets). The main result of this article is
	\begin{maintheorem}
		\label{maintheorem_density_smooth_valuations}
		$\VConv_k(V)^{sm}$ is sequentially dense in $\VConv_k(V)$.
	\end{maintheorem}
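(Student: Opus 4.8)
The plan is to obtain the approximating sequence by mollifying $\mu$ over the natural action of the affine group $G:=\GL(V)\ltimes V$ on $\Conv(V,\R)$, given by $(g\cdot f):=f\circ g^{-1}$, where $g=(A,v)\in G$ acts on $V$ by $x\mapsto Ax+v$, together with the induced contragredient action $(g\cdot\mu)(f):=\mu(g^{-1}\cdot f)$ on valuations. First I would check that this defines a representation of $G$ on $\VConv_k(V)$: the valuation property and continuity are immediate, $k$-homogeneity is preserved because the action commutes with the dilations $f\mapsto tf$, and dual epi-translation invariance survives because $g\cdot(f+\lambda+c)$ differs from $g\cdot f$ only by a linear functional and a constant. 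Using the description of the compact subsets of $\Conv(V,\R)$ (Proposition~2.4 of \cite{Knoerr:support_of_dually_epi-translation_invariant_valuations}) one then checks that the orbit maps $g\mapsto g\cdot\mu$ are continuous and that the action is locally equicontinuous, so $G$ acts continuously on $\VConv_k(V)$.

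The key geometric point is that this action is compatible with the differential cycle of Section~\ref{section_properties_differential_cycle}: the cotangent lift $\bar g\colon T^*V\to T^*V$, $\bar g(x,\xi)=(Ax+v,(A^*)^{-1}\xi)$, preserves the tautological one-form, and one has $D(g\cdot f)=\bar g_*D(f)$ up to orientation. Hence $g$ sends the smooth valuation $f\mapsto D(f)[\tau]$ to $f\mapsto D(f)[\bar g^*\tau]$ (up to sign), and $\bar g^*$ maps $\Omega^n_{hc}(T^*V)$ into itself, since $\supp(\bar g^*\tau)=\bar g^{-1}(\supp\tau)$ is again horizontally compact. In particular $\VConv_k(V)^{sm}$, and therefore its closure in $\VConv_k(V)$, is $G$-invariant.

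Now fix $\mu\in\VConv_k(V)$ and a left Haar measure on $G$. For $\rho\in C^\infty_c(G)$ set $\mu_\rho:=\int_G\rho(g)\,(g\cdot\mu)\,dg\in\VConv_k(V)$; this integral converges because the integrand is continuous and compactly supported in $g$ and $\VConv_k(V)$ is sequentially complete. Choosing a sequence $\rho_j\ge 0$ with $\int_G\rho_j=1$ and $\supp\rho_j$ shrinking to $\{e\}$, continuity of the orbit map together with local equicontinuity yields $\mu_{\rho_j}(f)\to\mu(f)$ uniformly for $f$ in any compact subset of $\Conv(V,\R)$, i.e. $\mu_{\rho_j}\to\mu$ in $\VConv_k(V)$. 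Since the conclusion of Theorem~\ref{maintheorem_density_smooth_valuations} only asks for sequential density, it therefore suffices to prove that each $\mu_\rho$ is smooth.

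This last step is the main obstacle, and it is the analogue in the present setting of Alesker's theorem \cite{Alesker:IrreducibilityThm} characterising smooth translation invariant valuations on convex bodies. The approach I would take is to first produce a representation of the arbitrary valuation $\mu$ by the differential cycle paired with an object of low regularity — a horizontally compactly supported current, or a form with merely measurable coefficients — using the structure theory of $\VConv_k(V)$ (its Goodey-Weil embedding, cf. Section~\ref{section_properties_differential_cycle} and \cite{Knoerr:support_of_dually_epi-translation_invariant_valuations}), and then to observe that the $G$-average of such an object against the smooth weight $\rho$, namely $\int_G\rho(g)\,\bar g^*T\,dg$, is a genuine form in $\Omega^n_{hc}(T^*V)$ representing $\mu_\rho$: along the $G$-directions $\bar g(p)$ sweeps out a neighbourhood of $p$, so after a change of variables the coefficients inherit the smoothness of $\rho$, while horizontal compactness of the support is preserved because $\supp\rho$ is compact. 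Equivalently, one must show that the Goodey-Weil data of a $G$-smooth vector of $\VConv_k(V)$ are smooth and that smooth Goodey-Weil data are realised by a horizontally compactly supported differential form; establishing this regularity transfer is the technical heart of the argument.
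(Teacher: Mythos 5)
Your reduction to showing that each mollified valuation $\mu_\rho$ is smooth is where the argument stops being a proof: the final step, which you yourself flag as ``the technical heart'', is precisely the content that has to be supplied, and the route you sketch for it does not work. There is no representation theorem expressing an \emph{arbitrary} continuous valuation $\mu\in\VConv_k(V)$ as the differential cycle paired with ``a horizontally compactly supported current or a form with measurable coefficients''; the Goodey--Weil embedding produces a distribution supported on the diagonal, not a current on $T^*V$ to be paired against $D(f)$, and no such low-regularity kernel is available in the paper or the literature it cites. Without that object, averaging over $G$ has nothing to regularize, so the claim that the $G$-smooth vectors $\mu_\rho$ lie in $\VConv_k(V)^{sm}$ (in the paper's sense of being of the form $f\mapsto D(f)[\tau]$, $\tau\in\Omega^n_{hc}(T^*V)$) is unproved. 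Note that the analogous statement for convex bodies -- that smooth vectors of the $\GL$-action are exactly the valuations given by smooth forms against the conormal cycle -- is a deep theorem resting on Alesker's Irreducibility Theorem; one cannot expect to get its functional analogue by a formal change-of-variables argument.

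The paper circumvents exactly this obstacle by never mollifying on the function side. It embeds $\VConv_k(V)$ into $\Val_k(V^*\times\R)$ via $T(\mu)[K]=\mu(h_K(\cdot,-1))$, where $T$ restricts to a Banach-space isomorphism onto valuations with vertical support in the lower half-sphere (Theorem \ref{theorem_image_VCONV->Val}); it performs the convolution smoothing there, with control on the vertical support (Proposition \ref{proposition_approximation_by_smooth_val_in_Val_with_restrictions_on_vsupp}), where the representation of smooth valuations by forms against the conormal cycle is already known (Theorem \ref{theorem_representation_smooth_val_conv_bodies_using_normal_cycle}); and the actual work is Proposition \ref{proposition_characterization_smooth_valuations_VConv}, which transfers such a representing form back through the map $Q$, using Proposition \ref{proposition_connection_normal_cycle_differential_cycle_under_restriction}, the two kernel theorems (Theorem \ref{theorem_kernel_theorem_convex_bodies} and Theorem \ref{maintheorem_kernel_theorem}), and the image description of the symplectic Rumin operator (Proposition \ref{proposition_desription_image_symp_D}), to produce $\tau\in\Omega^{n-k,k}$ with $\mu=D(\cdot)[\tau]$. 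If you want to salvage your approach, you would have to prove a statement of this strength anyway, so the detour through $\Val_k(V^*\times\R)$ is not optional decoration but the mechanism that makes the density argument go through. (Your preliminary steps -- continuity of the affine action, $D(f\circ g)=\pm\,\bar g_* D(f)$, invariance of $\VConv_k(V)^{sm}$, convergence $\mu_{\rho_j}\to\mu$ -- are fine, but they are also the easy part.)
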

	The proof of this result is based on the connection of the space of dually epi-translation invariant valuations on convex functions to translation invariant valuations on convex bodies obtained in \cite{Knoerr:support_of_dually_epi-translation_invariant_valuations}, in particular concerning the topology on both spaces. In addition, we require a description of the differential forms inducing smooth dually epi-translation invariant valuations. The key tool is the following description of the kernel of this construction of smooth valuations, which involves a certain second order differential operator $\D$ on $T^*V$, called the \emph{symplectic Rumin operator}(see Section \ref{section_kernel_theorem} for the precise definition). 
	\begin{maintheorem}
		\label{maintheorem_kernel_theorem}
		$\tau\in\Omega_{hc}^{n}(T^*V)$ satisfies $D(f)[\tau]=0$ for all $f\in\Conv(V,\R)$ if and only if
		\begin{enumerate}
			\item $\D\tau=0$,
			\item $\int_V\tau=0$, where we consider the zero section $V\hookrightarrow T^*V$ as a submanifold.
		\end{enumerate}
	\end{maintheorem}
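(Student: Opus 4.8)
\textbf{Proof proposal for Theorem \ref{maintheorem_kernel_theorem}.}

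The plan is to prove both implications by reducing the differential cycle $D(f)$ to something computable on a dense class of functions, and then using the continuity of $D$ from Theorem \ref{theorem_continuity_D_on_convex_functions} to extend. For the "if" direction, suppose $\D\tau=0$ and $\int_V\tau=0$. First I would establish the statement for $f\in\Conv(V,\R)\cap C^2(V)$, where $D(f)$ is just integration over the graph $\Gamma_{df}=\{(x,df_x):x\in V\}$ with its natural orientation as an $n$-dimensional submanifold of $T^*V$. On such a Lagrangian graph, the restriction of a form $\tau$ to $\Gamma_{df}$ depends only on certain "reduced" data of $\tau$; the point of the symplectic Rumin operator $\D$ is precisely that $\D\tau$ measures the failure of $\tau$ to have a well-defined restriction to Lagrangian submanifolds modulo exact terms. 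Concretely, I expect that $\D\tau=0$ forces $\tau|_{\Gamma_{df}}$ to be exact up to a term controlled by the restriction to the zero section, so that $\int_{\Gamma_{df}}\tau = \int_V\tau$ by a Stokes/homotopy argument (deforming $df$ to the zero section through $t\,df$ and using that $\D$ commutes with the relevant pullbacks). Then $\int_V\tau=0$ gives $D(f)[\tau]=0$ on $C^2$ functions, and density of $C^2$ functions in $\Conv(V,\R)$ together with continuity of $f\mapsto D(f)[\tau]$ closes this direction.

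For the "only if" direction, assume $D(f)[\tau]=0$ for all $f\in\Conv(V,\R)$. Taking $f\equiv 0$ immediately yields $\int_V\tau=0$. To get $\D\tau=0$, I would exploit the fact that $D(f)$ is a cycle (has no boundary, appropriately interpreted with horizontal compactness), so $D(f)[d\eta]=0$ automatically; hence $D(f)[\tau]$ depends only on the class of $\tau$ modulo $d\Omega^{n-1}_{hc}(T^*V)$ plus forms vanishing on all differential cycles. The symplectic Rumin complex is built exactly so that $\D$ descends to this quotient, and one knows (or can show, via the second-order nature of $\D$ and a symbol computation) that a form with $\D\tau\ne 0$ can be detected by pairing against the graph of $df$ for a suitable smooth strictly convex $f$: choosing $f$ quadratic, $\Gamma_{df}$ is an arbitrary linear Lagrangian, and the family of such pairings separates the image of $\D$. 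More carefully, I would localize near a point $\xi_0\in T^*V$ where $\D\tau(\xi_0)\ne 0$, pick coordinates, and construct $f$ whose graph passes through $\xi_0$ with prescribed second-order behaviour so that $D(f)[\tau]$ picks out $\D\tau(\xi_0)$ against a bump function — contradicting the hypothesis.

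The main obstacle I anticipate is the precise algebraic bookkeeping around the symplectic Rumin operator: one needs a clean statement of the form "for a Lagrangian submanifold $L$, $\iota_L^*\tau$ is determined modulo exact forms by $\tau \bmod (\text{Lagrangian ideal})$, and $\D$ is the differential on the resulting quotient complex," together with a Poincar\'e-type lemma for that complex ensuring that a closed ($\D\tau=0$) horizontally compactly supported top-degree form is exact up to the single topological obstruction $\int_V\tau$. Establishing this Poincar\'e lemma — essentially that the Rumin-type complex computes the right (relative) cohomology and that in top degree the only invariant is integration over the zero section — is where the real work lies; the rest is Stokes' theorem, the homotopy $t\mapsto t\,df$, and an approximation argument via Theorem \ref{theorem_continuity_D_on_convex_functions}. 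A secondary subtlety is keeping track of horizontal compactness of supports throughout the homotopy and the localization, but since rescaling $f$ by $t$ keeps the relevant compact set in $V$ fixed, this should not cause genuine trouble.
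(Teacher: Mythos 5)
Your outline has the right overall shape (test against $f\equiv 0$, work with smooth strictly convex $f$ and use continuity, view graphs of $df$ as Lagrangians), but both directions have genuine gaps at exactly the crucial points. In the ``only if'' direction, your claim that for quadratic convex $f$ the graph $\Gamma_{df}$ is ``an arbitrary linear Lagrangian'' is false: differentials of convex functions only produce \emph{positive} Lagrangian tangent planes, so vanishing of the pairings against such graphs cannot by itself separate the image of $\D$. This is precisely where the paper needs Lemma \ref{lemma_positive_isotropic_subspace} (a form vanishing on all strictly positive isotropic $n$-planes is a multiple of $\omega_s$), combined with the primitivity of $\D\tau$ (Proposition \ref{proposition_properties_tilde_D_d}) and the Lefschetz decomposition (Proposition \ref{proposition_Lefschetz-decomposition}), to upgrade ``vanishes on positive Lagrangians'' to $\D\tau=0$. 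Moreover, you never supply the mechanism by which the hypothesis $D(f)[\tau]=0$ ``picks out'' $\D\tau$: in the paper this is a first-variation argument --- for $f$ smooth with $f-\lambda|\cdot|^2$ convex and $g\in C_c^\infty(V)$, the function $f+tg$ is convex for small $t$, and differentiating $0=D(f+tg)[\tau]=D(f)[\Phi_{tg}^*\tau]$ (Proposition \ref{proposition_Fu_sum_MA_andC11}) at $t=0$, using that $D(f)$ is closed and Lagrangian and that $i_{X_g}\omega_s=-\pi^*dg$, gives $D(f)[\pi^*g\wedge\D\tau]=0$, i.e.\ $\D\tau$ vanishes on the tangent spaces to the graph of $df$. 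Without this variational step and the positivity lemma, your localization/symbol argument does not close.

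In the ``if'' direction you defer the key step to an unproved ``Poincar\'e lemma for the Rumin-type complex,'' and that is a real hole; it is also unnecessary. The paper's argument is short and direct: set $\xi:=\bar{d}\tau$, so $\D\tau=0$ makes $\tau+\alpha\wedge\xi$ closed; for smooth convex $f$ one has $D(f)\llcorner\alpha=D(f)\llcorner\pi^*df$, hence $D(f)[\alpha\wedge\xi]=D(f)[d(\pi^*f)\wedge\xi]=-D(f)[\pi^*f\wedge\D\tau]=0$ by Stokes (closedness of $D(f)$); and since $D(f)$ is homologous to $D(0)=[V\times\{0\}]$ (your homotopy $t\mapsto t\,df$ is the right picture here), $D(f)[\tau]=D(f)[\tau+\alpha\wedge\xi]=D(0)[\tau+\alpha\wedge\xi]=\int_V\tau=0$, and continuity (Corollary \ref{corollary_Continuity_valuations_defined_by_differential_cycle}) extends this to all of $\Conv(V,\R)$. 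So the missing ingredient in your write-up is the explicit correction form $\alpha\wedge\bar{d}\tau$, without which your Stokes/homotopy step remains a hope (``I expect that\dots'') rather than a proof.
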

	We apply these results to Hadwiger-type theorems for dually epi-translation invariant valuations on convex functions:	
	If $G\subset \GL(V)$ is a subgroup, then we call a valuation $\mu\in\VConv(V)$ $G$-invariant if $\mu(f\circ g)=\mu(f)$ for all $f\in\Conv(V,\R)$ and all $g\in G$. Then a result similar to Theorem \ref{maintheorem_density_smooth_valuations} holds for valuations that are invariant under a compact subgroup, see Proposition \ref{proposition:density_invariant_valuations}. By considering the relevant invariant differential forms and using the density of smooth valuations, we obtain 
	\begin{maintheorem}
		\label{maintheorem_classification_transitive_group_1-hom_case}
		Let $G\subset \mathrm{O}(n)$ be a closed subgroup that operates transitively on the unit sphere. If $\mu\in\VConv_1(\R^n)$ is a continuous valuation that is in addition $G$-invariant, then $\mu$ is $\mathrm{O}(n)$-invariant. 
	\end{maintheorem}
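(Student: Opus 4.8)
The plan is to reduce the statement to smooth valuations and then transfer the transitivity of $G$ on the sphere to the ``generating distribution'' of such a valuation, using the kernel theorem in an essential way.

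First, I would apply Proposition \ref{proposition:density_invariant_valuations}: the $G$-invariant valuations in $\VConv_1(\R^n)^{sm}$ are sequentially dense among the $G$-invariant valuations in $\VConv_1(\R^n)$. Since convergence in $\VConv_1(\R^n)$ implies pointwise convergence on $\Conv(\R^n,\R)$, the class of $\mathrm{O}(n)$-invariant valuations is closed under sequential limits, so it suffices to prove the assertion for $\mu\in\VConv_1(\R^n)^{sm}$. Writing $\mu(f)=D(f)[\tau]$ with $\tau\in\Omega_{hc}^n(T^*\R^n)$ and averaging $\tau$ over the compact group $G$---which does not change $\mu$, because $D(f\circ g)$ is the push-forward of $D(f)$ under the symplectic lift of $g\in G$, and which preserves horizontal compactness of the support---we may assume $g^*\tau=\tau$ for all $g\in G$.

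Next I would identify the data on which $\mu$ actually depends. By Theorem \ref{maintheorem_kernel_theorem} and linearity, $D(f)[\tau]$ depends on $\tau$ only through $\D\tau$ and $\int_V\tau$; the latter equals $D(0)[\tau]=\mu(0)=0$ since $\mu$ is $1$-homogeneous, so $\mu$ is determined by $\D\tau$. Translating dual epi-translation invariance and $1$-homogeneity of $\mu$ through Theorem \ref{maintheorem_kernel_theorem} forces $\D\tau$ to be invariant under translations along the cotangent fibers and homogeneous of the single admissible degree under fiber scaling; unwinding $D(f)[\tau]$ for such $\tau$ then shows that $\mu$ is linear in the measure-valued Hessian $D^2f$, hence of the form $f\mapsto\int_{\R^n}\operatorname{tr}\!\big(A(x)\,d(D^2f)(x)\big)$ for a compactly supported symmetric matrix field $A$, and that it depends on $A$ only through the compactly supported distribution $h_\mu:=\sum_{i,j}\partial_i\partial_j A_{ij}=\operatorname{div}\operatorname{div}A$. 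Because the construction is $\mathrm{O}(n)$-equivariant and $\operatorname{div}\operatorname{div}$ is an $\mathrm{O}(n)$-equivariant operator, $\mu$ is $G$-invariant (resp. $\mathrm{O}(n)$-invariant) if and only if $h_\mu$ is a $G$-invariant (resp. $\mathrm{O}(n)$-invariant) distribution on $\R^n$; by the previous paragraph $h_\mu$ is $G$-invariant.

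It then remains to prove the elementary fact that a $G$-invariant distribution on $\R^n$ is automatically $\mathrm{O}(n)$-invariant whenever $G$ acts transitively on $S^{n-1}$. For $\varphi\in C_c^\infty(\R^n)$ the $G$-average $x\mapsto\int_G\varphi(gx)\,dg$ is constant on each sphere $\{|x|=r\}$ by transitivity, hence radial, hence $\mathrm{O}(n)$-invariant, hence equal to its own $\mathrm{O}(n)$-average, which is $\int_{\mathrm{O}(n)}\varphi(g\,\cdot)\,dg$; thus $\int_G\varphi(g\,\cdot)\,dg=\int_{\mathrm{O}(n)}\varphi(g\,\cdot)\,dg$ for every test function $\varphi$. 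Pairing with $h_\mu$ and using its $G$-invariance gives $\langle h_\mu,\varphi\rangle=\langle h_\mu,\int_{\mathrm{O}(n)}\varphi(g\,\cdot)\,dg\rangle$ for all $\varphi$, which is exactly $\mathrm{O}(n)$-invariance of $h_\mu$; hence $\mu$ is $\mathrm{O}(n)$-invariant, and passing to the sequential limit proves the theorem. The main obstacle is the second step: extracting from Theorem \ref{maintheorem_kernel_theorem} the precise object a smooth $1$-homogeneous dually epi-translation invariant valuation depends on, and checking that $G$-invariance of $\mu$ corresponds to $G$-invariance of this \emph{distribution} rather than of an over-determined representing form---on raw representing forms (or on $A$ itself) $G$-invariance does \emph{not} force $\mathrm{O}(n)$-invariance, so the kernel theorem is genuinely needed here. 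The hypothesis $k=1$ is used precisely at this point: it is what makes the relevant data a single scalar distribution on $\R^n$, so that the one-line argument of the last step applies; for $k>1$ one would instead face $\mathrm{O}(n)$-equivariant tensor-valued data and a genuine invariant-theoretic computation.
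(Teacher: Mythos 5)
Your proposal is correct and follows essentially the same route as the paper: density of smooth $G$-invariant valuations (Proposition \ref{proposition:density_invariant_valuations}), reduction of the smooth $1$-homogeneous case to a compactly supported scalar datum --- your $h_\mu=\operatorname{div}\operatorname{div}A$ is exactly the paper's $\phi$ with $\D\tau=\pi^*(\phi\wedge\vol)$ and $\mu(f)=\int_{\R^n}f(x)\phi(x)\,d\vol(x)$ coming from Proposition \ref{proposition_desription_image_symp_D} and Lemma \ref{lemma_alternative_form_valuation_with_D} --- followed by the observation that transitivity of $G$ on the sphere forces this datum to be radial, hence $\mathrm{O}(n)$-invariant (Corollary \ref{corollary:1-homogeneous_invariant-valuation}). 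The only cosmetic differences are that you phrase the invariance transfer at the level of distributions and average the representing form (where the correct equivariance is $g^*\tau=\sign(\det g)\,\tau$ rather than $g^*\tau=\tau$, a sign that is immaterial since it cancels in the scalar datum), whereas the paper works directly with the smooth function $\phi$.
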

	As all continuous $\mathrm{O}(n)$-invariant valuations in $\VConv(\R^n)$ were classified by Colesanti, Ludwig and Mussnig \cite{Colesanti_Ludwig_Mussnig:Hadwiger_for_convex_functions}, this gives a complete characterization of these valuations in terms of singular Hessian valuations. 
	
	\subsection{Plan of the article}
		In Section \ref{section_translation_invariant_valuations} we recall some facts about translation invariant valuations on convex bodies, in particular the notion of vertical support introduced in \cite{Knoerr:support_of_dually_epi-translation_invariant_valuations}. The main results are Proposition \ref{proposition_approximation_by_smooth_val_in_Val_with_restrictions_on_vsupp}, which considers the problem of approximating continuous valuations by smooth valuations under restrictions on the vertical support, as well as Proposition \ref{proposition_characterization_support_GW_smooth_valuation}, which establishes a description of the vertical support for smooth valuations given by the integration of a differential form over the conormal cycle of a convex body.\\
		Section \ref{section_dually_epi-translation_invariant_valuations} recalls some facts on dually epi-translation invariant valuations obtained in \cite{Knoerr:support_of_dually_epi-translation_invariant_valuations}, in particular a connection between $\VConv(V)$ and the space of continuous translation invariant valuations on $\mathcal{K}(V^*\times\R)$.\\
		Smooth valuations on convex functions are introduced in Section \ref{section:Construction_valuations_differential_cycle}. Section \ref{section_properties_differential_cycle} recalls some properties of the differential cycle and we prove that our construction indeed defines continuous valuations on $\Conv(V,\R)$. Theorem \ref{maintheorem_kernel_theorem} is then proved in Section \ref{section_kernel_theorem}.\\
		We apply this theorem in Section \ref{section:invariant_smooth_valuations} to invariant smooth valuations. For this article, the most important results are Proposition \ref{proposition_desription_image_symp_D} and Theorem \ref{theorem:Characterization_smooth_dually_epi_valuations} from Section \ref{section:representation_smooth_dually_epi}, which give a characterization of smooth dually epi-translation invariant valuations in terms of differential forms.\\
		Finally, we prove Theorem \ref{maintheorem_density_smooth_valuations} in Section \ref{section_characterization_of_smooth_valuations} by establishing a relation between elements of $\VConv(V)^{sm}$ and smooth translation invariant valuations on $\mathcal{K}(V^*\times\R)$. Theorem \ref{maintheorem_classification_transitive_group_1-hom_case} is then proved in Section \ref{section:Application_to_invariant_valuations} using results from Section \ref{section:Invariance_under_subgroup}. 
		
		\paragraph{Acknowledgments}
		Part of this paper was written during a stay at the Universit\`a degli Studi di Firenze and I want to thank the university and especially Andrea Colesanti for the hospitality. I also want to thank Andreas Bernig for many useful discussions, suggestions and encouragement during this project, as well as his comments and remarks on the first draft of this paper. Finally I want to thank the referees for the careful checking of a first draft of this
		paper and many useful comments, especially for pointing out a serious error in one of the sections. This error could not be fixed with the tools used in this article and the relevant sections have thus been removed.\\

\section{Translation invariant valuations on convex bodies and their vertical support}
\subsection{Vertical support and approximation by smooth valuations}
	\label{section_translation_invariant_valuations}
		Let $\Val(V)$ denote the space of all continuous translation invariant valuations on $\mathcal{K}(V)$, i.e. all valuations $\mu:\mathcal{K}(V)\rightarrow \R$ that are continuous in the Hausdorff metric and satisfy $\mu(K+x)=\mu(K)$ for all $K\in\mathcal{K}(V)$, $x\in V$. This space becomes a Fr\'echet space with respect to the topology of uniform convergence on compact subsets. One of the most striking features is the following homogeneous decomposition.
			\begin{theorem}[\cite{McMullen:Euler_type_McMullen_decomposition}]
				For $k\in\R$ define $\Val_k(V):=\{\mu\in\Val(V): \mu(t K)=t^k\mu(K)\text{ for } K\in\mathcal{K}(V),t\ge 0\}$. Then
				\begin{align*}
					\Val(V)=\bigoplus\limits_{k=0}^{n}\Val_k(V).
				\end{align*}
			\end{theorem}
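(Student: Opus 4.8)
The plan is the classical one: first show that $\mu$ behaves polynomially under dilations, and then read off the homogeneous components as the coefficients of that polynomial. The one substantial input is a polynomiality statement, which I would establish (or cite from McMullen) before anything else; the rest is formal.

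\textbf{Step 1 (polynomiality under dilation).} I claim that for every polytope $P\subset V$ the map $t\mapsto\mu(tP)$ agrees on $[0,\infty)$ with a polynomial of degree at most $n=\dim V$. I would derive this from McMullen's more general fact that for polytopes $P_1,\dots,P_m$ the map $(\lambda_1,\dots,\lambda_m)\mapsto\mu(\lambda_1P_1+\cdots+\lambda_mP_m)$ is a polynomial of total degree at most $n$ on $[0,\infty)^m$ (take $m=1$, $P_1=P$). That fact is proved by induction on $\dim V$: for $\lambda_i>0$ the normal fan of $\lambda_1P_1+\cdots+\lambda_mP_m$ is the common refinement of the normal fans of the $P_i$, so the combinatorial type of the sum is constant and each face is an affine-in-$\lambda$ Minkowski combination of corresponding faces of the $P_i$; feeding this into an inclusion--exclusion decomposition of $P$ over its proper faces and applying the inductive hypothesis on the lower-dimensional facets yields the polynomial dependence. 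I expect this inductive combinatorial argument to be the only genuinely hard part of the proof; everything below is bookkeeping.

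\textbf{Step 2 (extracting the components).} Write $\mu(tP)=\sum_{k=0}^n a_k(P)t^k$ for polytopes $P$. Evaluating at $t=0,1,\dots,n$ and inverting the nonsingular Vandermonde system expresses $a_k(P)=\sum_{j=0}^n c_{kj}\,\mu(jP)$ with constants $c_{kj}$ independent of $P$. Since $P\mapsto\mu(jP)$ is Hausdorff-continuous, the right-hand side defines a continuous, translation-invariant functional $\mu_k$ on all of $\mathcal{K}(V)$ (translation invariance because $\mu(j(P+x))=\mu(jP)$, hence the coefficients are translation invariant). For general $K\in\mathcal{K}(V)$ pick polytopes $P_i\to K$: then $\mu(tP_i)=\sum_k a_k(P_i)t^k\to\sum_k\mu_k(K)t^k$ while also $\mu(tP_i)\to\mu(tK)$, so $\mu(tK)=\sum_{k=0}^n\mu_k(K)t^k$ for all $t\ge0$ and all $K$. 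Setting $t=1$ gives $\mu=\sum_{k=0}^n\mu_k$.

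\textbf{Step 3 ($\mu_k\in\Val_k(V)$, and directness).} Replacing $K$ by $sK$ in Step 2 and comparing coefficients of $t^k$ in $\mu((st)K)=\sum_j\mu_j(K)(st)^j$ gives $\mu_k(sK)=s^k\mu_k(K)$, so $\mu_k$ is $k$-homogeneous once we know it is a valuation. For the valuation identity, take $K,L\in\mathcal{K}(V)$ with $K\cup L\in\mathcal{K}(V)$; then $t(K\cup L)=tK\cup tL$ and $t(K\cap L)=tK\cap tL$ are again convex bodies, so $\mu(tK\cup tL)+\mu(tK\cap tL)=\mu(tK)+\mu(tL)$ for every $t\ge0$, and equating coefficients of $t^k$ (via Step 2) yields $\mu_k(K\cup L)+\mu_k(K\cap L)=\mu_k(K)+\mu_k(L)$. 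Thus $\mu_k\in\Val_k(V)$. Finally the sum is direct: if $\sum_{k=0}^n\nu_k=0$ with $\nu_k\in\Val_k(V)$, then $0=\sum_k\nu_k(tK)=\sum_k t^k\nu_k(K)$ for all $t\ge0$, so $\nu_k(K)=0$ for every $k$ and every $K$. Hence $\Val(V)=\bigoplus_{k=0}^n\Val_k(V)$.
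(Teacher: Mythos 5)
This theorem is quoted in the paper from McMullen without proof, and your argument is precisely the standard derivation behind that citation: polynomiality of $t\mapsto\mu(tP)$ on polytopes (McMullen's key result), Vandermonde extraction of the coefficients $\mu_k$, continuity to pass from polytopes to all of $\mathcal{K}(V)$, and the formal verification of homogeneity, the valuation property, and directness, all of which you carry out correctly. The only part not proved in full detail is the polynomiality statement itself, which you rightly single out as the substantive input and attribute to McMullen --- exactly the role it plays in the cited source --- so there is no genuine gap beyond that acknowledged citation.
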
 
		Elements of $\Val_k(V)$ are called \emph{$k$-homogeneous} or homogeneous of degree $k$. Note that the theorem implies that $\Val(V)$ is actually a Banach space with respect to the norm
		\begin{align*}
			\|\mu\|:=\sup\limits_{K\subset B}|\mu(K)|,
		\end{align*}
		where $B\in\mathcal{K}(V)$ is any convex body with non-empty interior.\\
		Let us restate the homogeneous decomposition: For any valuation $\mu\in\Val(V)$, the map $t\mapsto \mu(tK)$ is a polynomial in $t\ge 0$ with degree bounded by the dimension of $V$. Starting with a homogeneous element $\mu\in\Val_k(V)$, this theorem allows us to define the polarization of $\mu$ (see \cite{Schneider:convex_bodies_Brunn-Minkowski} Theorem 6.3.6). We obtain a symmetric functional $\bar{\mu}:\mathcal{K}(V)^k\rightarrow \R$ with the following properties:
		\begin{enumerate}
			\item $\bar{\mu}$ is a continuous valuation in each coordinate.
			\item $\bar{\mu}$ is additive in each coordinate: For $K,L, K_2,\dots, K_k\in\mathcal{K}(V)$:
			\begin{align*}
				\bar{\mu}(K+L,K_2,\dots,K_k)=\bar{\mu}(K,K_2,\dots,K_k)+\bar{\mu}(L,K_2,\dots,K_k).
			\end{align*}
			\item $\bar{\mu}(K,\dots,K)=\mu(K)$ for all $K\in\mathcal{K}(V)$.
		\end{enumerate}
		Note that the first two properties imply that $\bar{\mu}$ is $1$-homogeneous in each argument. In particular, $\bar{\mu}$ is essentially a multilinear functional. To make this precise, recall that any convex body $K\in\mathcal{K}(V)$ is uniquely determined by its support function $h_K:V^*\rightarrow\R$ given by
		\begin{align*}
			h_K(y):=\sup_{x\in K}\langle y,x\rangle\quad\text{for }y\in V^*.
		\end{align*}
		If the boundary of $K$ is smooth with strictly positive Gauss curvature, then $h_K\in C^\infty(V^*\setminus\{0\})$. As $h_K$ is $1$-homogeneous, it is advantageous to consider it as a function on the unit sphere $S(V^*)$ in $V^*$, assuming that we have chosen a scalar product.\\ In invariant terms this may be expressed in the following way: Let $\mathbb{P}_+(V^*)$ denote the space of oriented lines in $V^*$ and consider the line bundle $L$ over $\mathbb{P}_+(V^*)$ with fiber over $l\in\mathbb{P}_+(V^*)$ given by $L_l:=\{h:l^+\rightarrow\R: h\  1\text{-homogeneous}\}$. Here $l^+\subset l\setminus\{0\}$ is the positive half line induced by the orientation of $l$. Then any support function of a convex body $K\in\mathcal{K}(V)$ induces a continuous section of $L$, that we denote by $h_K$ again, by setting
		\begin{align*}
			[h_K(l)](y)=\sup\limits_{x\in K}\langle y,x\rangle\quad \text{for }y\in l^+.
		\end{align*}
		Let $C^\infty(\mathbb{P}_+(V^*),L)$ denote the space of all smooth sections of $L$. The polarization $\bar{\mu}$ can than be considered as a continuous multilinear functional $C^\infty(\mathbb{P}_+(V^*),L)^k\rightarrow \R$. Applying the Schwartz kernel theorem, we obtain the following result due to Goodey and Weil:
		\begin{theorem}[\cite{Goodey_Weil:Distributions_and_valuations}]
			\label{theorem_GW_for_Val}
			For every $\mu\in\Val_k(V)$ there exists a unique distribution $\GW(\mu)\in\mathcal{D}'(\mathbb{P}_+(V^*)^k,L^{\boxtimes k})$, called the Goodey-Weil distribution of $\mu$, such that
			\begin{align*}
				\GW(\mu)\left[h_{K_1}\otimes\dots\otimes h_{K_k}\right]=\bar{\mu}(K_1,\dots,K_k)
			\end{align*}
			for all smooth convex bodies $K\in\mathcal{K}(V)$ with strictly positive Gauss curvature. \\
			In particular, the map $\GW:\Val_k(V)\rightarrow\mathcal{D}'(\mathbb{P}_+(V^*)^k,L^{\boxtimes k})$ injective.
		\end{theorem}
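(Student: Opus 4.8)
The plan is to build $\GW(\mu)$ out of the polarization $\bar\mu$ by combining Minkowski additivity with the Schwartz kernel theorem, and to get uniqueness and injectivity from a density argument. Fix $\mu\in\Val_k(V)$ and recall that $\bar\mu:\mathcal{K}(V)^k\to\R$ exists and is symmetric, continuous and Minkowski additive (hence $1$-homogeneous) in each slot, with $\bar\mu(K,\dots,K)=\mu(K)$. The key geometric input is that smooth convex bodies with strictly positive Gauss curvature are dense in $\mathcal{K}(V)$, and that a section $h\in C^\infty(\mathbb{P}_+(V^*),L)$ is the support function of such a body precisely when a second order ellipticity condition (positivity of the spherical Hessian of $h$ plus $h$ times the identity) holds; since this condition is open and is satisfied by the unit ball, for every $h\in C^\infty(\mathbb{P}_+(V^*),L)$ and every sufficiently large ball $B$ both $h+h_B$ and $h_B$ are support functions of smooth bodies with positive Gauss curvature. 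In particular, every smooth section of $L$ is a difference of two such support functions.

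First I would use Minkowski additivity to extend $\bar\mu$ slot by slot. If $h_{K}-h_{L}=h_{K'}-h_{L'}$ for smooth bodies with positive curvature, then $K+L'=K'+L$ as convex bodies, so additivity in the first slot gives $\bar\mu(K,\dots)+\bar\mu(L',\dots)=\bar\mu(K',\dots)+\bar\mu(L,\dots)$; iterating over all slots yields a well-defined multilinear map $M:C^\infty(\mathbb{P}_+(V^*),L)^k\to\R$ with $M(h_{K_1},\dots,h_{K_k})=\bar\mu(K_1,\dots,K_k)$ on smooth bodies with positive Gauss curvature. Next I would check that $M$ is separately continuous for the Fréchet topology of $C^\infty(\mathbb{P}_+(V^*),L)$: if $h_j\to h$ in $C^\infty$, write $h_j=(h_j+h_B)-h_B$ with $B$ a fixed sufficiently large ball, so that $h+h_B$ has positive Gauss curvature and hence so does $h_j+h_B$ for $j$ large (by $C^2$-convergence); the corresponding bodies then converge in the Hausdorff metric, and continuity of $\bar\mu$ in that slot gives $M(h_j,\dots)\to M(h,\dots)$. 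A separately continuous multilinear form on a product of Fréchet spaces is jointly continuous by the uniform boundedness principle, and the Schwartz kernel theorem, in the version for sections of $L^{\boxtimes k}$ over $\mathbb{P}_+(V^*)^k$ (which reduces to the scalar case after trivializing and twisting by the density bundle), produces a distribution $\GW(\mu)\in\mathcal{D}'(\mathbb{P}_+(V^*)^k,L^{\boxtimes k})$ representing $M$; in particular $\GW(\mu)[h_{K_1}\otimes\dots\otimes h_{K_k}]=\bar\mu(K_1,\dots,K_k)$.

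For uniqueness I would note that finite sums of tensor products $f_1\otimes\dots\otimes f_k$ with $f_i\in C^\infty(\mathbb{P}_+(V^*),L)$ are dense in $C^\infty(\mathbb{P}_+(V^*)^k,L^{\boxtimes k})$ (nuclearity of $C^\infty$), and since each $f_i$ is a difference of support functions of smooth bodies with positive Gauss curvature, the span of $\{h_{K_1}\otimes\dots\otimes h_{K_k}\}$ over such bodies is already dense; hence a distribution is determined by its values on these test sections, which gives both uniqueness of $\GW(\mu)$ and linearity of $\mu\mapsto\GW(\mu)$. Finally, if $\GW(\mu)=0$ then $\bar\mu$ vanishes on all tuples of smooth bodies with positive Gauss curvature, hence on all of $\mathcal{K}(V)^k$ by density and continuity, so $\mu(K)=\bar\mu(K,\dots,K)=0$; thus $\GW$ is injective. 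The main obstacle is the functional-analytic bookkeeping: passing rigorously from Hausdorff-continuity of $\bar\mu$ to $C^\infty$-separate continuity of $M$ — which is exactly where the explicit "add a large ball" decomposition and the openness of the ellipticity condition are needed — and setting up the Schwartz kernel theorem correctly in the line-bundle-valued setting over the projective base $\mathbb{P}_+(V^*)$ rather than for scalar test functions on $\R^n$.
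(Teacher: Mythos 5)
Your proposal is correct and follows essentially the same route the paper (and the cited Goodey--Weil argument) takes: extend the polarization $\bar{\mu}$ to differences of support functions of smooth bodies with positive Gauss curvature via Minkowski additivity, verify continuity in the $C^\infty$-topology using the ``add a large ball'' trick, and apply the Schwartz kernel theorem for sections of $L^{\boxtimes k}$, with uniqueness and injectivity following from density of tensor products of support functions and of smooth bodies in $\mathcal{K}(V)$. No gaps worth flagging.
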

		Here $\mathcal{D}'(\mathbb{P}_+(V^*)^k,L^{\boxtimes k})$ denotes the topological dual space of $C^\infty(\mathbb{P}_+(V^*)^k,L^{\boxtimes k})$, where $L^{\boxtimes k}$ is the line bundle over $\mathbb{P}_+(V^*)^k$ whose fibers consist of the tensor products of the corresponding fibers in $L$.\\
		In \cite{Knoerr:support_of_dually_epi-translation_invariant_valuations} the notion of vertical support was introduced for valuations in $\Val(V)$. It is based on the following observation by Alesker.
		\begin{proposition}[\cite{Alesker:McMullenconjecture} Proposition 3.3]
			For $\mu\in\Val_k(V)$, the support of $\GW(\mu)$ is contained in the diagonal in $\mathbb{P}_+(V^*)^k$.
		\end{proposition}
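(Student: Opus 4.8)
The plan is to follow Alesker's strategy. Fix a Euclidean structure on $V$, so that support functions become $1$-homogeneous functions on $V^*\setminus\{0\}$ and $\mathbb{P}_+(V^*)$ is identified with the unit sphere. By the definition of the support of a distribution it suffices to prove: if $(\ell_1,\dots,\ell_k)\in\mathbb{P}_+(V^*)^k$ satisfies, say, $\ell_1\ne\ell_2$, then there are neighborhoods $\ell_i\in U_i$ with $U_1\cap U_2=\emptyset$ such that $\GW(\mu)$ annihilates every smooth section supported in $U_1\times\dots\times U_k$. Since finite sums of sections of the form $\phi_1\otimes\dots\otimes\phi_k$ with $\supp\phi_i\subset U_i$ are dense in that space and $\GW(\mu)$ is continuous, it is enough to show
\[
\GW(\mu)[\phi_1\otimes\dots\otimes\phi_k]=\bar\mu(\phi_1,\dots,\phi_k)=0
\]
whenever $\supp\phi_1\cap\supp\phi_2=\emptyset$, where $\bar\mu$ now denotes the unique multilinear extension of the polarization to smooth $1$-homogeneous functions. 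Using multilinearity and rescaling, the fact that a smooth function supported in $U_i$ is a difference of two nonnegative smooth functions supported in $U_i$, and that any smooth $1$-homogeneous function is a difference of support functions of smooth convex bodies with positive Gauss curvature, I would reduce to the following situation: $\phi_1=h_A-h_B$ and $\phi_2=h_C-h_B$, where $B$ is a fixed large Euclidean ball and $A,C\supseteq B$ are smooth, positively curved, and agree with $B$ outside the cones over $U_1$ resp.\ $U_2$, while $\phi_3,\dots,\phi_k$ are support functions of smooth positively curved bodies $K_3,\dots,K_k$. By bilinearity in the first two slots the quantity above is then the four-term alternating sum
\[
\bar\mu(A,C,K_3,\dots,K_k)-\bar\mu(B,C,K_3,\dots,K_k)-\bar\mu(A,B,K_3,\dots,K_k)+\bar\mu(B,B,K_3,\dots,K_k).
\]

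The next step is to pass from the polarization back to $\mu$. Expanding $\mu$ on the Minkowski combinations $\lambda\,F(\sigma,\rho)+\sum_{j\ge3}\tau_jK_j$, where $F(\sigma,\rho)$ is the (smooth, positively curved, for $\sigma,\rho$ small) body with support function $h_B+\sigma\phi_1+\rho\phi_2$, and comparing the coefficients of $\lambda^2\sigma\rho\tau_3\cdots\tau_k$, one sees that it suffices to prove: for every smooth positively curved convex body $D$ containing a ball, the polynomial $(\sigma,\rho)\mapsto\mu(E(\sigma,\rho))$ has vanishing $\sigma\rho$-coefficient, where $E(\sigma,\rho)$ is the smooth body with support function $h_D+\sigma\phi_1+\rho\phi_2$.

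This is the geometric heart of the argument. Let $E_1(\sigma)$ and $E_2(\rho)$ be the smooth bodies with support functions $h_D+\sigma\phi_1$ and $h_D+\rho\phi_2$. Because $\phi_1,\phi_2\ge0$ have disjoint supports, $\max(h_D+\sigma\phi_1,\,h_D+\rho\phi_2)=h_D+\sigma\phi_1+\rho\phi_2$, so $\Conv(E_1(\sigma)\cup E_2(\rho))=E(\sigma,\rho)$. The key claim is that, since $D$ has positive curvature, contains a ball, and the two bumps $E_1(\sigma)\setminus D$ and $E_2(\rho)\setminus D$ lie over the disjoint sets $U_1$ and $U_2$, for $\sigma,\rho$ small one in fact has
\[
E_1(\sigma)\cup E_2(\rho)=E(\sigma,\rho),\qquad E_1(\sigma)\cap E_2(\rho)=D.
\]
Granting this, the valuation property of $\mu$ gives $\mu(E(\sigma,\rho))+\mu(D)=\mu(E_1(\sigma))+\mu(E_2(\rho))$, so $\mu(E(\sigma,\rho))$ is the sum of a function of $\sigma$ alone and a function of $\rho$ alone, and hence has vanishing $\sigma\rho$-coefficient.

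I expect the main obstacle to be the geometric claim in the last display. The identity $E_1(\sigma)\cap E_2(\rho)=D$ reduces to the two bumps being disjoint subsets of $V$, which holds for small bumps since $U_1\cap U_2=\emptyset$. The convexity of the union is subtler: a point of $E(\sigma,\rho)$ lying outside $D$ is within distance $O(\sigma+\rho)$ of $D$ and, by the bounded curvature of $\partial D$, can exceed the support function of $D$ only in directions close to the outer normal of its nearest boundary point; hence it cannot exceed $h_D$ both over $U_1$ and over $U_2$, which is precisely the assertion that it lies in $E_1(\sigma)\cup E_2(\rho)$. A secondary technical point is the density of sums of sections $\phi_1\otimes\dots\otimes\phi_k$ with prescribed supports in $C^\infty$ of sections over a product neighborhood --- the same nuclearity input that already underlies the Goodey--Weil construction.
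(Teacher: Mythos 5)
Your proposal is correct: the paper itself gives no proof of this proposition (it is quoted from Alesker's McMullen-conjecture paper), and your argument is essentially the classical Goodey--Weil/Alesker one --- reduce via density and multilinearity to disjointly supported bumps $\phi_1,\phi_2$ added to a smooth positively curved body $D$, observe $E(\sigma,\rho)=E_1(\sigma)\cup E_2(\rho)$ and $E_1(\sigma)\cap E_2(\rho)=D$ for small $\sigma,\rho$, and use the valuation property to kill the mixed $\sigma\rho$-coefficient. The geometric step you flag does go through as you sketch it (and can even be done softly: if a point close to $D$ exceeded $h_D$ in directions from both $\supp\phi_1$ and $\supp\phi_2$, a compactness/limit argument would produce a boundary point of the smooth body $D$ with two distinct outer normals).
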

		\begin{definition}
			For $1\le k\le n$, we define the vertical support of $\mu\in\Val_k(V)$ to be the set 
			\begin{align*}
				\vsupp \mu:=\Delta^{-1}(\supp\GW(\mu)),
			\end{align*}
			where $\Delta:\mathbb{P}_+(V^*)\rightarrow \mathbb{P}_+(V^*)^k$ is the diagonal embedding.
			For $k=0$, we set $\vsupp\mu=\emptyset$. If $\mu=\sum_{i=0}^{n}\mu_i$ is the homogeneous decomposition, we set $\vsupp\mu:=\bigcup_{i=0}^n\vsupp\mu_i$.
		\end{definition}
		Alternatively one may use the following characterization of the vertical support.
		\begin{lemma}[\cite{Knoerr:support_of_dually_epi-translation_invariant_valuations} Lemma 6.14]
			\label{lemma_property_vertical_support}
			Let $\mu\in\Val(V)$. The vertical support is minimal (with respect to inclusion) among all closed sets $A\subset\mathbb{P}_+(V^*)$ with the following property: If $K,L\in\mathcal{K}(V)$ are two convex bodies with $h_K=h_L$ on a neighborhood of $A$, then $\mu(K)=\mu(L)$.
		\end{lemma}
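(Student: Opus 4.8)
The plan is to establish the two inclusions hidden in the word ``minimal'' separately: first that $\vsupp\mu$ itself has the stated property, and second that every closed set $A$ with that property satisfies $\vsupp\mu\subset A$. Since $\mu(sK)=\mu(sL)$ for all $s\ge 0$ whenever $\mu$ agrees on the rescaled bodies, the homogeneous decomposition shows that $\mu$ has the property if and only if each component $\mu_j\in\Val_j(V)$ does, and $\vsupp\mu=\bigcup_j\vsupp\mu_j$; so I would reduce immediately to $\mu\in\Val_k(V)$ with $1\le k\le n$ (the case $k=0$ being trivial, as $\Val_0(V)$ consists of constants). I will use throughout that $\supp\GW(\mu)\subset\Delta(\mathbb{P}_+(V^*))$, so that $\supp\GW(\mu)=\Delta(\vsupp\mu)$, together with the fact that $\GW(\mu)[h_{K_1}\otimes\dots\otimes h_{K_k}]=\bar\mu(K_1,\dots,K_k)$ holds not only for bodies with positive Gauss curvature but for all bodies with $h_{K_i}\in C^\infty(\mathbb{P}_+(V^*),L)$; this extension follows by Minkowski-adding a small body with positive curvature and passing to the limit, using continuity of $\bar\mu$ on $\mathcal{K}(V)$ and of $\GW(\mu)$ on $C^\infty$.

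For the first claim, suppose $h_K=h_L$ on an open neighborhood $U$ of $\vsupp\mu$. If $K$ and $L$ have smooth support functions, then polarizing and telescoping gives
\begin{align*}
\mu(K)-\mu(L)=\sum_{j=1}^k\GW(\mu)\bigl[h_L^{\otimes(j-1)}\otimes(h_K-h_L)\otimes h_K^{\otimes(k-j)}\bigr],
\end{align*}
and in the $j$-th term the tensor factor $h_K-h_L$ vanishes on $U$, hence the whole section vanishes on the open set $\mathbb{P}_+(V^*)^{j-1}\times U\times\mathbb{P}_+(V^*)^{k-j}\supset\Delta(\vsupp\mu)=\supp\GW(\mu)$; so every term vanishes and $\mu(K)=\mu(L)$. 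For general $K,L$ I would replace them by rotation averages $K_m,L_m$ taken against smooth probability densities on the rotation group that concentrate at the identity: these are smooth convex bodies, $K_m\to K$ and $L_m\to L$ in the Hausdorff metric, and $h_{K_m}-h_{L_m}$ is the corresponding average of $h_K-h_L$, hence vanishes on $\bigcap_g gU$, which is open and -- since $\vsupp\mu$ is compact -- contains $\vsupp\mu$ for $m$ large. The smooth case gives $\mu(K_m)=\mu(L_m)$, and continuity of $\mu$ yields $\mu(K)=\mu(L)$.

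For the second claim, let $A$ be closed with the property and suppose, for contradiction, that $l_0\in\vsupp\mu\setminus A$; choose an open set $W\ni l_0$ with $\overline W\cap A=\emptyset$. Fix a body $P_0$ with positive curvature, bodies $P_1,\dots,P_{k-1}$ with positive curvature, and a smooth $1$-homogeneous section $\psi$ supported in $W$. For parameters $t_1,\dots,t_{k-1},s$ small enough the section $h_{P_0}+\sum_i t_i h_{P_i}+s\psi$ is again the support function of a smooth body $Q$ with positive curvature (this is an open condition in the $C^2$-topology on support functions), and $h_Q=h_{P_0+\sum_i t_iP_i}$ off $\supp\psi\subset\overline W$, i.e.\ on a neighborhood of $A$. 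Hence the property of $A$ forces $\mu(Q)=\mu(P_0+\sum_i t_iP_i)$, which after expanding reads
\begin{align*}
\GW(\mu)\Bigl[\bigl(h_{P_0}+\textstyle\sum_i t_i h_{P_i}+s\psi\bigr)^{\otimes k}\Bigr]=\GW(\mu)\Bigl[\bigl(h_{P_0}+\textstyle\sum_i t_i h_{P_i}\bigr)^{\otimes k}\Bigr]
\end{align*}
as an identity of polynomials in $t_1,\dots,t_{k-1},s$; comparing the coefficient of $t_1\cdots t_{k-1}s$ gives $\GW(\mu)[h_{P_1}\otimes\dots\otimes h_{P_{k-1}}\otimes\psi]=0$. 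By linearity of $\GW(\mu)$ this remains true if each $h_{P_i}$ is replaced by a difference of support functions of bodies with positive curvature; as every smooth $1$-homogeneous section is such a difference (add a large multiple of a fixed support function with positive curvature) and finite sums of pure tensors are dense, continuity of $\GW(\mu)$ gives $\GW(\mu)[\Phi\otimes\psi]=0$ for all smooth $\Phi$. Therefore $\GW(\mu)$ vanishes on all sections supported in $\mathbb{P}_+(V^*)^{k-1}\times W$, so $\Delta(l_0)\notin\supp\GW(\mu)$ -- contradicting $l_0\in\vsupp\mu$. Hence $\vsupp\mu\subset A$.

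The main obstacle is the second claim: the difficulty is that ``$l_0\notin\vsupp\mu$'' must be detected by honest convex bodies, whereas $\GW(\mu)$ only pairs with tensor products of support functions localized near the diagonal. The perturbation $h_{P_0}+\sum_i t_i h_{P_i}+s\psi$ is the device that converts a localized test section into such a family of bodies, and the subsequent polarization, the remark that every smooth $1$-homogeneous section is a difference of two positively curved support functions, and the density of pure tensors are what let a single admissible family recover the pairing of $\GW(\mu)$ with an arbitrary section supported in $W$. By comparison the first claim is routine, the only care needed being the approximation of general bodies by smooth ones that still coincide near $\vsupp\mu$.
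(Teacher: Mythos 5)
The paper itself does not prove this lemma (it is quoted from the cited reference), so there is no internal proof to compare against; judged on its own, your argument is correct. The reduction to homogeneous components via scaling, the telescoping polarization identity showing that $\vsupp\mu$ itself has the property (with the Goodey--Weil identity extended to bodies with smooth support functions and general bodies handled by group-convolution smoothing, which preserves support functions), and the Alesker-type perturbation $h_{P_0}+\sum_i t_i h_{P_i}+s\psi$ combined with the symmetry of $\GW(\mu)$ and density of sums of pure tensors to prove minimality are exactly the standard route; the minimality step in particular parallels the perturbation technique this paper uses in Proposition \ref{proposition_characterization_support_GW_smooth_valuation}. The two steps you compress -- that $\GW(\mu)[h_{K_1}\otimes\dots\otimes h_{K_k}]=\bar{\mu}(K_1,\dots,K_k)$ extends to bodies whose support functions are smooth but not strictly positively curved, and that finite sums of tensors $\Phi\otimes\psi$ with $\psi$ supported in $W$ suffice to detect vanishing of the distribution on $\mathbb{P}_+(V^*)^{k-1}\times W$ -- are standard and correctly invoked.
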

		\begin{remark}
			While an invariant definition of the vertical support is necessary for the proof of Proposition \ref{proposition_approximation_by_smooth_val_in_Val_with_restrictions_on_vsupp} below, this invariant formulation becomes slightly cumbersome in later sections. If $V$ (and thus $V^*$) carries a Euclidean structure, we will consider the support function $h_K$ of $K\in\mathcal{K}(V)$ as a function on the unit sphere $S(V^*)\subset V^*$. Similarly, the vertical support is identified with a subset of the unit sphere in $V^*$ under the diffeomorphism $S(V^*)\cong \mathbb{P}_+(V^*)$ induced by the scalar product. Obviously, Lemma \ref{lemma_property_vertical_support} still holds under these identifications. Similarly, we will consider the Goodey-Weil distributions as distributions on $S(V^*)^k$, i.e. as elements of the topological dual space of $C^\infty(S(V^*)^k)$.
		\end{remark}
		For $A\subset \mathbb{P}_+(V^*)$ let $\Val_{k,A}(V)\subset \Val_k(V)$ denote the subspace of all valuations with vertical support contained in $A$. Lemma \ref{lemma_property_vertical_support} then directly implies
		\begin{corollary}[\cite{Knoerr:support_of_dually_epi-translation_invariant_valuations} Corollary 6.15]
			\label{corollary_Val_subspaces_compact_support_are_Banach}
			Let $A\subset \mathbb{P}_+(V^*)$ be closed. Then $\Val_{k,A}(V)\subset \Val_k(V)$ is closed and thus a Banach space.
		\end{corollary}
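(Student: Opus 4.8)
The plan is to deduce this directly from Lemma \ref{lemma_property_vertical_support}, as the statement suggests. Since $\Val(V)$ is a Banach space and $\Val_k(V)$ is one of the summands in its homogeneous decomposition, hence a closed subspace, $\Val_k(V)$ is itself Banach; so it suffices to show that $\Val_{k,A}(V)$ is a closed \emph{linear} subspace of $\Val_k(V)$, and the Banach property then follows automatically. Linearity I would obtain from the characterization in Lemma \ref{lemma_property_vertical_support}: it implies $\vsupp(\mu+\nu)\subset\vsupp\mu\cup\vsupp\nu$ and $\vsupp(\lambda\mu)\subset\vsupp\mu$, since if $h_K=h_L$ on a neighborhood of $\vsupp\mu\cup\vsupp\nu$ then this holds in particular on neighborhoods of $\vsupp\mu$ and of $\vsupp\nu$ separately, so $(\mu+\nu)(K)=(\mu+\nu)(L)$, and minimality of $\vsupp(\mu+\nu)$ yields the inclusion. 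In particular, if $\mu,\nu\in\Val_{k,A}(V)$ then $\mu+\nu$ and all scalar multiples lie in $\Val_{k,A}(V)$.

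For closedness, let $(\mu_j)_{j\in\mathbb{N}}$ be a sequence in $\Val_{k,A}(V)$ converging to some $\mu\in\Val_k(V)$; since $\Val_k(V)$ is metrizable, sequential closedness suffices. First I would note that convergence in $\Val_k(V)$ implies pointwise convergence $\mu_j(K)\to\mu(K)$ for every $K\in\mathcal{K}(V)$ — a single body is a compact subset of $\mathcal{K}(V)$, or alternatively one rescales and translates $K$ into the fixed body $B$ and combines norm convergence with $k$-homogeneity. Next, I claim that $A$ itself has the separation property from Lemma \ref{lemma_property_vertical_support} with respect to each $\mu_j$: if $K,L\in\mathcal{K}(V)$ satisfy $h_K=h_L$ on a neighborhood $U$ of $A$, then, because $\vsupp\mu_j\subset A\subset U$, the set $U$ is also a neighborhood of $\vsupp\mu_j$, and minimality of $\vsupp\mu_j$ gives $\mu_j(K)=\mu_j(L)$.

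Passing to the limit then yields $\mu(K)=\lim_j\mu_j(K)=\lim_j\mu_j(L)=\mu(L)$ whenever $h_K=h_L$ on a neighborhood of $A$. Thus $A$ is a closed set with the property characterizing $\vsupp\mu$ in Lemma \ref{lemma_property_vertical_support}, so minimality forces $\vsupp\mu\subset A$, i.e. $\mu\in\Val_{k,A}(V)$. This proves that $\Val_{k,A}(V)$ is closed in $\Val_k(V)$, and being a closed subspace of a Banach space it is a Banach space. I do not expect a genuine obstacle here; the only point requiring a moment's care is the interchange of the limit with evaluation of the valuation, which is exactly where one uses that the topology on $\Val_k(V)$ dominates pointwise convergence on $\mathcal{K}(V)$.
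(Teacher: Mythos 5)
Your proposal is correct and follows exactly the route the paper intends: the paper gives no separate argument beyond stating that Lemma \ref{lemma_property_vertical_support} directly implies the corollary, and your proof simply fills in that implication (pointwise convergence of $\mu_j(K)$, the separation property of $A$ inherited from $\vsupp\mu_j\subset A$, and minimality of $\vsupp\mu$). The only point worth noting is that you implicitly use that convergence in $\Val_k(V)$ dominates pointwise evaluation, which you justify correctly via homogeneity and translation invariance.
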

		
			We are mostly interested in the subspace of \emph{smooth} valuations. Recall that $\mu\in\Val(V)$ is called smooth if the map
		\begin{align*}
		\GL(V)&\rightarrow\Val(V)\\
		g&\mapsto \pi(g)\mu
		\end{align*} 
		is a smooth map, where $[\pi(g)\mu](K):=\mu(g^{-1}K)$ for $\mu\in\Val(V)$, $g\in\GL(V)$, $K\in\mathcal{K}(V)$ denotes the natural operation of $\GL(V)$ on $\Val(V)$. It is a standard fact from representation theory (using a convolution argument similar to the proof of Proposition \ref{proposition_approximation_by_smooth_val_in_Val_with_restrictions_on_vsupp} below) that the space of smooth valuations is dense in $\Val(V)$. It will be denoted by $\Val(V)^{sm}$ and is naturally equipped with a Fr\'echet topology that is stronger than the subspace topology.\\
		Recall that $\GL(V)$ operates on the line bundle $L\rightarrow\mathbb{P}_+(V^*)$ by $g(l,y):=(gl, y\circ g^{-1})$, where $gl$ is the oriented line induced by $\lambda\circ g^{-1}$ for $\lambda\in l^+$. This induces an operation of $\GL(V)$ on $C^\infty(\mathbb{P}_+(V^*),L)$ by setting $[(g f)(l)](y):=[f(g^{-1}l)](y\circ g)$ for $f\in C^\infty(\mathbb{P}_+(V^*),L)$, $l\in\mathbb{P}_+(V^*)$, $y\in l^+$. It is then easy to see that $\GW:\Val_k(V)\rightarrow\mathcal{D}'(\mathbb{P}_+(V^*)^k,L^{\boxtimes k})$ is $\GL(V)$-equivariant. In particular we obtain
		\begin{lemma}
			\label{lemma_vertical_support_under_GL}
			For $g\in \GL(V)$, $\mu\in\Val(V)$, $\vsupp(\pi(g)\mu)=g(\vsupp\mu)$.
		\end{lemma}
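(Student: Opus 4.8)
The plan is to derive the identity from the $\GL(V)$-equivariance of the Goodey--Weil map together with the elementary fact that the support of a distribution is carried along by a diffeomorphism.

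First I would reduce to the homogeneous case: since $[\pi(g)\mu](tK)=\mu(t\,g^{-1}K)$, the operator $\pi(g)$ commutes with the scaling action on $\Val(V)$ and hence preserves each summand $\Val_k(V)$ of the McMullen decomposition, so that the homogeneous decomposition of $\pi(g)\mu$ is $\sum_i\pi(g)\mu_i$ with $\pi(g)\mu_i\in\Val_i(V)$. Because $\vsupp$ is by definition the union of the vertical supports of the homogeneous components, it then suffices to treat $\mu\in\Val_k(V)$; and for $k=0$ both sides are empty, so one may assume $1\le k\le n$.

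Next I would use that $\GW:\Val_k(V)\to\mathcal{D}'(\mathbb{P}_+(V^*)^k,L^{\boxtimes k})$ is $\GL(V)$-equivariant, i.e. $\GW(\pi(g)\mu)=g\cdot\GW(\mu)$ for the action on distributional sections dual to the action on $C^\infty(\mathbb{P}_+(V^*)^k,L^{\boxtimes k})$. This action covers the diagonal action $g^{\times k}$ of $g$ on the base $\mathbb{P}_+(V^*)^k$, and since $L^{\boxtimes k}$ is locally trivial the support of a distributional section is transported exactly as for an ordinary distribution under the diffeomorphism $g^{\times k}$; hence $\supp\GW(\pi(g)\mu)=g^{\times k}(\supp\GW(\mu))$. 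Finally, the diagonal embedding is equivariant, $\Delta\circ g=g^{\times k}\circ\Delta$, giving $\Delta^{-1}(g^{\times k}S)=g(\Delta^{-1}(S))$ for every $S\subset\mathbb{P}_+(V^*)^k$, and combining the last two facts,
\begin{align*}
\vsupp(\pi(g)\mu)=\Delta^{-1}\big(g^{\times k}(\supp\GW(\mu))\big)=g\big(\Delta^{-1}(\supp\GW(\mu))\big)=g(\vsupp\mu).
\end{align*}

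I do not expect any step to be genuinely hard. The only point requiring care is bookkeeping the various induced $\GL(V)$-actions --- on $L$, on $L^{\boxtimes k}$, on smooth sections and on distributions --- so that the equivariance of $\GW$ is correctly matched with the covariance of supports, and confirming that the twist by the line bundle $L^{\boxtimes k}$ plays no role in the support computation since the bundle is locally trivial.
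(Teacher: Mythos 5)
Your proposal is correct and follows exactly the route the paper intends: the lemma is stated there as an immediate consequence of the $\GL(V)$-equivariance of the Goodey--Weil map, and your write-up just fills in the routine details (reduction to homogeneous components, transport of the support of a distributional section under the diffeomorphism $g^{\times k}$, and equivariance of the diagonal embedding). No discrepancies with the paper's argument.
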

		\begin{proposition}
			\label{proposition_approximation_by_smooth_val_in_Val_with_restrictions_on_vsupp}
			Let $A\subset \mathbb{P}_+(V^*)$ be compact, $B\subset \mathbb{P}_+(V^*)$ a compact neighborhood of $A$. Then the following holds: For every $\mu\in\Val_{k,A}(V)$, there exists a sequence in $\Val_{k,B}(V)\cap \Val(V)^{sm}$ converging to $\mu$.
		\end{proposition}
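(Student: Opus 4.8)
The plan is to obtain the smooth approximating sequence by a convolution (mollification) argument on the group $\GL(V)$, while carefully tracking the effect on the vertical support. The standard density of $\Val(V)^{sm}$ in $\Val(V)$ is proved exactly this way: one picks an approximate identity $(\rho_j)$ of compactly supported smooth densities on $\GL(V)$ supported in shrinking neighborhoods of the identity $e$, and sets $\mu_j := \pi(\rho_j)\mu = \int_{\GL(V)} \rho_j(g)\, \pi(g)\mu\, dg$. Each $\mu_j$ lies in $\Val(V)^{sm}$ because $\GL(V)$ acts on it smoothly (differentiation under the integral sign), and $\mu_j \to \mu$ in $\Val(V)$ since $g \mapsto \pi(g)\mu$ is continuous and $(\rho_j)$ concentrates at $e$. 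The only new ingredient needed here is control of $\vsupp \mu_j$.

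First I would record the effect of the group action on vertical support: by Lemma \ref{lemma_vertical_support_under_GL}, $\vsupp(\pi(g)\mu) = g(\vsupp\mu) \subset g(A)$. Next I would show that $\vsupp$ behaves well under the integral (i.e.\ under taking limits of convex combinations of the $\pi(g)\mu$): concretely, I would argue that
\begin{align*}
	\vsupp\big(\pi(\rho_j)\mu\big) \subset \overline{\bigcup_{g\in\supp\rho_j} g(A)}.
\end{align*}
The cleanest way to see this is via the characterization in Lemma \ref{lemma_property_vertical_support}: if $K, L \in \mathcal{K}(V)$ have $h_K = h_L$ on a neighborhood $U$ of the right-hand set, then for every $g \in \supp\rho_j$ we have $h_{g^{-1}K} = h_{K}\circ(g^{-1})^{*-1}$ agreeing with $h_{g^{-1}L}$ on a neighborhood of $g^{-1}(g(A)) \supset A$ — wait, more precisely one uses that $h_K = h_L$ near $g(A)$ implies $h_{g^{-1}K} = h_{g^{-1}L}$ near $A \subset g^{-1}(U)$, hence $\mu(g^{-1}K) = \mu(g^{-1}L)$ by $\vsupp\mu \subset A$, i.e.\ $[\pi(g)\mu](K) = [\pi(g)\mu](L)$; integrating against $\rho_j$ gives $\mu_j(K) = \mu_j(L)$, so by minimality $\vsupp\mu_j$ is contained in that closed set. (One should check the set is closed, or simply replace it by its closure, which is harmless.)

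Finally I would use that $A$ is compact and $B$ is a compact neighborhood of $A$: since the map $\GL(V) \times \mathbb{P}_+(V^*) \to \mathbb{P}_+(V^*)$, $(g, l)\mapsto gl$, is continuous and equals the projection $(e,l)\mapsto l$ for $g = e$, compactness of $A$ gives a neighborhood $W$ of $e$ in $\GL(V)$ such that $g(A) \subset \mathrm{int}\, B$ for all $g \in W$; indeed for each $l \in A$ there is a product neighborhood $W_l \times U_l$ mapping into $\mathrm{int}\,B$, and finitely many $U_l$ cover $A$. Choosing $j$ large enough that $\supp\rho_j \subset W$, we get $\vsupp\mu_j \subset \overline{\bigcup_{g\in W}g(A)} \subset B$, so $\mu_j \in \Val_{k,B}(V) \cap \Val(V)^{sm}$. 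Passing to such large $j$ yields the desired sequence. The main obstacle is the second step — establishing that vertical support does not blow up under the convolution integral — and the invariant Lemma \ref{lemma_property_vertical_support} characterization is precisely the tool that makes this robust, which is presumably why the remark preceding the proposition emphasizes that an invariant definition is needed here.
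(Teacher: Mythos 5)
Your proof is correct and follows essentially the same route as the paper: mollify $\mu$ over $\GL(V)$ with an approximate identity supported near the identity, use Lemma \ref{lemma_vertical_support_under_GL} together with compactness of $A$ to keep the vertical supports inside $B$ once the mollifier's support is small enough, and obtain smoothness from the convolution structure. The only cosmetic difference is that you verify $\vsupp\mu_j\subset B$ directly via the characterization in Lemma \ref{lemma_property_vertical_support}, whereas the paper observes that each integrand $\phi_j(g)\pi(g)\mu$ lies in the closed subspace $\Val_{k,B}(V)$ (Corollary \ref{corollary_Val_subspaces_compact_support_are_Banach}), hence so does the integral.
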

		\begin{proof}
			Take a sequence of relatively compact open neighborhoods $(U_j)_j$ of the identity in $\GL(V)$
			such that their diameter converges to zero with respect to some Riemannian metric on $\GL(V)$. Given $\mu\in\Val_{k,A}(V)$ and $g\in U_j$, Lemma \ref{lemma_vertical_support_under_GL} shows that $\vsupp(\pi(g)\mu)\subset U_j\cdot A$. As $B$ is a neighborhood of $A$, the fact that the diameter of the neighborhoods $(U_j)_j$ converges to zero implies that there exists $N\in\mathbb{N}$ such that $U_j\cdot A\subset B$ for all $j\ge N$. In particular, $\pi(g)\mu\in \Val_{k,B}(V)$ for $g\in U_j$ and $j\ge N$ by Lemma \ref{lemma_vertical_support_under_GL}.\\
			Now take $\phi_j\in C_c^\infty(U_j)$ with $\int_{\GL(V)}\phi_j(g)dg=1$ (where have have equipped $\GL(V)$ with some left invariant Haar measure) and consider the valuations
			\begin{align*}
				\mu_j:=\int_{\GL(V)}\phi_j(g) \cdot\pi(g)\mu\ dg.
			\end{align*}
			For $j\ge N$, $\phi_j(g) \cdot\pi(g)\mu\in\Val_{k,B}(V)$ for all $g\in \GL(V)$ by construction. As this is a closed subspace by Corollary \ref{corollary_Val_subspaces_compact_support_are_Banach}, we deduce $\mu_j\in\Val_{k,B}(V)$ for all $j\ge N$.\\
			For $h\in \GL(V)$ and $j\ge N$, 
			\begin{align*}
				\pi(h)\mu_j=\int_{\GL(V)}\phi_j(g) \cdot\pi(hg)\mu\ dg=\int_{\GL(V)}\phi_j(h^{-1}g) \cdot\pi(g)\mu\ dg
			\end{align*}
			is just the convolution of the $\Val(V)$-valued continuous function $g\mapsto \pi(g)\mu$ on $\GL(V)$ and the smooth function $\phi_j\in C^\infty_c(\GL(V))$. In particular, $h\mapsto \pi(h)\mu_j$ depends smoothly on $h$ and $\mu_j$ is thus a smooth valuation, i.e. $\mu_j\in \Val_{k,B}(V)\cap \Val(V)^{sm}$ for $j\ge N$. Obviously, $(\mu_j)_j$ converges to $\mu$ in $\Val_k(V)$. The claim follows.
		\end{proof}
	
	\subsection{Construction of smooth valuations using the conormal cycle}
		\label{section_construction_smooth_valuations_co-normal_cycle}
		Let $V$ be an oriented Euclidean vector space. For a convex body $K\in\mathcal{K}(V)$, the set 
		\begin{align*}
			\CNC (K):=\left\{(x,v)\in V\times S(V^*): v\text{ outer unit normal to } K \text{ in }x\in\partial K\right\}
		\end{align*} is a Lipschitz submanifold of the co-sphere bundle $V\times S(V^*)$ of dimension $n-1$, which carries a natural orientation induced by the orientation of $V$ and can thus be considered as an integral current, called the \emph{conormal cycle} of $K$. For a precise definition of this current, we refer to \cite{Alesker_Fu:Valuations_on_manifolds3} (see also \cite{Fu:Curvature_measures_of_subanalytic_sets} for results on the conormal cycle for arbitrary compact as well as subanalytic sets). \\
		For special cases of convex bodies, the conormal cycle admits a very simple description. Recall that the support functional $h_K:V^*\setminus\{0\}\rightarrow\R$ is smooth for every smooth convex body $K\in\mathcal{K}(V)$ with strictly positive Gauss curvature. As it is $1$-homogeneous, its differential $dh_K:V^*\setminus\{0\}\rightarrow (V^*)^*\cong V$ is $0$-homogeneous and can thus be considered as a map $d'h_K:S(V^*)\rightarrow V$.
		\begin{lemma}
			\label{lemma_conormal-cycle-smooth-body}
			If $K\in\mathcal{K}(V)$ is smooth with strictly positive Gauss curvature, then
			\begin{align*}
				\CNC(K)=\left(d' h_K\times Id\right)_*\left[S(V^*)\right].
			\end{align*}
		\end{lemma}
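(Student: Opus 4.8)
The plan is to verify that the right-hand side has all the defining properties that characterize the conormal cycle $\CNC(K)$ as an integral current, or — since $K$ is assumed smooth with strictly positive Gauss curvature — simply to identify the two $(n-1)$-currents as integration over one and the same oriented submanifold. First I would recall the geometric meaning of $d'h_K$: for a smooth convex body $K$ with positive Gauss curvature, the Gauss map $\nu_K:\partial K\to S(V^*)$ is a diffeomorphism, and its inverse is exactly the \emph{reverse Gauss map} $S(V^*)\to\partial K$ that sends an outer unit normal $v$ to the unique boundary point where $v$ is attained. The standard identity from convex geometry (see \cite{Schneider:convex_bodies_Brunn-Minkowski}, Corollary 1.7.3) states that this reverse Gauss map is precisely $d'h_K$, i.e.\ $d'h_K(v)\in\partial K$ and $v$ is the outer unit normal to $K$ at $d'h_K(v)$ for every $v\in S(V^*)$.

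Granting this, the map $(d'h_K\times\mathrm{Id}):S(V^*)\to V\times S(V^*)$, $v\mapsto(d'h_K(v),v)$, is a smooth embedding whose image is exactly the set $\{(x,v): v\text{ outer unit normal to }K\text{ at }x\in\partial K\}=\CNC(K)$ as a subset of the co-sphere bundle. Indeed, surjectivity onto $\CNC(K)$ follows because every $x\in\partial K$ equals $d'h_K(v)$ for $v=\nu_K(x)$, and injectivity is immediate since the second coordinate already determines the point. Thus the two currents in the statement are both given by integration of $(n-1)$-forms over the same smooth $(n-1)$-dimensional submanifold of $V\times S(V^*)$.

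It then remains to check that the orientations agree. The conormal cycle $\CNC(K)$ carries, by definition (see \cite{Alesker_Fu:Valuations_on_manifolds3}), the orientation induced from the orientation of $V$ via the Legendrian/contact structure on $V\times S(V^*)$; concretely, for smooth $K$ this is the orientation under which the current has the expected sign in the description of, say, the Euler characteristic or the area measures. On the other side, $(d'h_K\times\mathrm{Id})_*[S(V^*)]$ carries the push-forward of the boundary orientation of $S(V^*)$ (which is itself oriented as the boundary of the unit ball, compatibly with the orientation of $V$ through $V^*$). Both orientations are pinned down by the same convention relating them to the fixed orientation of $V$, so they coincide; one way to see this cleanly is to note that both currents induce the same first variation of volume, i.e.\ pairing either against the relevant $(n-1)$-form recovers the surface area measure $S_{n-1}(K,\cdot)$ with the correct sign, and an integral current supported on a connected oriented submanifold is determined by such a pairing. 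I expect the only genuinely delicate point to be this orientation bookkeeping — matching the convention built into the abstract definition of $\CNC(K)$ in \cite{Alesker_Fu:Valuations_on_manifolds3} with the boundary orientation of the sphere — while the identification of the underlying point sets via the reverse Gauss map is routine convex geometry.
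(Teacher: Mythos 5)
The paper gives no proof of this lemma at all: it is stated as a known fact right after the observation that $dh_K$ is $0$-homogeneous, so there is no argument of the paper to compare yours against. Your proposed proof is the standard verification and is essentially correct: the identity $d'h_K(v)\in\partial K$ with $v$ the outer unit normal at $d'h_K(v)$ (Schneider, Corollary 1.7.3, i.e.\ the reverse Gauss map) shows that $v\mapsto(d'h_K(v),v)$ is a graph map, hence a smooth embedding of $S(V^*)$ onto exactly the underlying set of $\CNC(K)$, so the two currents are multiplicity-one integration over the same smooth $(n-1)$-submanifold and can differ at most by a sign on each component. The only soft spot is the orientation step: the sentence ``both orientations are pinned down by the same convention'' is an assertion rather than a check, since the actual convention sits inside the definition of the conormal cycle in Alesker--Fu; but your fallback—fixing the sign by exhibiting one pairing with known value (e.g.\ recovering the surface area measure, or the $0$-homogeneous component giving the Euler characteristic)—does legitimately settle it, with the small caveat that for $n=1$ the sphere is disconnected and one should test a pairing on each component (or just check that case by hand). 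So the proposal is sound; it simply supplies the routine argument the paper chose to omit.
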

		Let $I_{n-1}(V\times S(V^*))$ denote the space of integral currents of dimension $n-1$ in $V\times S(V^*)$. 
		\begin{proposition}[\cite{Alesker_Fu:Valuations_on_manifolds3} Proposition 2.1.12]
			\label{proposition_continuity_conormal_cycle}
			The map $\CNC:\mathcal{K}(V)\rightarrow I_{n-1}(V\times S(V^*))$ is continuous, where $I_{n-1}(V\times S(V^*))$ is equipped with the local flat metric topology. Furthermore, $\CNC$ is a valuation: For $K,L\in\mathcal{K}(V)$:
			\begin{align*}
				\CNC(K)+\CNC(L)=\CNC(K\cup L)+\CNC(K\cap L) \quad\text{if }K\cup L \in \mathcal{K}(V).
			\end{align*}
		\end{proposition}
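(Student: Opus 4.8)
The plan is to treat the two assertions separately: continuity will follow from a uniform mass bound together with the Federer--Fleming compactness theorem and a uniqueness characterization of the conormal cycle, while the valuation property will be reduced to a finite combinatorial identity for polytopes.

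For the \emph{mass bound}, fix $R>0$ and restrict attention to convex bodies contained in the ball $B_R$. One uses that, writing $B$ for the Euclidean unit ball and $p_K$ for the metric projection onto $K$, the map $\Phi_K\colon\partial(K+B)\to V\times S(V^*)$, $\Phi_K(y):=(p_K(y),y-p_K(y))$, is Lipschitz with Lipschitz constant depending only on $n$, that $\partial(K+B)$ is a $C^{1,1}$-hypersurface oriented as the boundary of the convex body $K+B$, and that $\CNC(K)=\Phi_{K*}[\partial(K+B)]$. Since $K+B\subseteq B_{R+1}$, monotonicity of surface area for convex bodies gives $\mathcal{H}^{n-1}(\partial(K+B))\le\mathcal{H}^{n-1}(\partial B_{R+1})$, so $\mathbf{M}(\CNC(K))\le C(n,R)$ for all $K\subseteq B_R$; as $\CNC(K)$ is a \emph{cycle}, $\partial\CNC(K)=0$, so the boundary masses are bounded as well.

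Now let $K_j\to K$ in the Hausdorff metric, all bodies lying in a fixed $B_R$. By the mass bound and the Federer--Fleming compactness theorem, every subsequence of $(\CNC(K_j))_j$ has a further subsequence converging in the flat topology (which, everything being contained in the compact set $B_R\times S(V^*)$, agrees with the local flat topology) to an integral $(n-1)$-cycle $T$, and the mass bound upgrades this to weak convergence against smooth forms. It remains to identify $T=\CNC(K)$, which I would do by verifying the properties characterizing the conormal cycle: (i) the underlying set of $T$ lies in the geometric conormal bundle $\{(x,v):v\text{ outer normal to }K\text{ at }x\}$ — this holds because that bundle is upper semicontinuous in $K$ under Hausdorff convergence (if $(x_j,v_j)\in\CNC(K_j)$ and $(x_j,v_j)\to(x,v)$, then $\langle v_j,y-x_j\rangle\le0$ for $y\in\operatorname{int}K\subseteq K_j$, $j$ large, and one passes to the limit), and because the support of a weak limit of currents supported in shrinking neighborhoods of a closed set is contained in that set; (ii) $T$ annihilates the contact form $\alpha$, i.e. $\langle T,\alpha\wedge\cdot\rangle=0$, since $\langle\CNC(K_j),\alpha\wedge\psi\rangle=0$ passes to the limit; (iii) the pairing of $T$ with a smooth representative of the Gauss--Bonnet (index) form equals $\lim_j\chi(K_j)=1$. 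By the uniqueness of the conormal cycle among integral cycles with these properties, $T=\CNC(K)$ regardless of the subsequence, hence $\CNC(K_j)\to\CNC(K)$. (Lower-dimensional $K$ or $K_j$ require minor separate attention but no new idea.)

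For the \emph{valuation property}, note first that by the continuity just proved it suffices to establish the four-term identity
\[
\CNC(C\cap H^+)+\CNC(C\cap H^-)=\CNC(C)+\CNC(C\cap H)
\]
for every convex body $C$ and hyperplane $H$ (with both closed half-spaces $H^\pm$ meeting the interior of $C$): indeed, an elementary lemma shows that if $K\cup L\in\mathcal{K}(V)$ then either one body contains the other (in which case the valuation identity is trivial) or, setting $C:=K\cup L$, there is a hyperplane $H$ with $K=C\cap H^+$ and $L=C\cap H^-$. Approximating $C$ by polytopes $C_m$ and applying continuity to each of $C_m\cap H^\pm$, $C_m$, $C_m\cap H$, it suffices to prove the displayed identity for polytopal $C$. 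There all four bodies are polytopes, $\CNC(P)=\sum_{F}[\operatorname{relint}F]\times[N(P,F)\cap S(V^*)]$ (sum over faces $F$, with the product orientations), and the identity reduces, fibrewise over each point $x$, to an inclusion--exclusion identity for the spherical normal cones at $x$, which is in turn an instance of the additivity of the spherical Euler characteristic. (Alternatively one may invoke the description of the conormal cycle as the image of the constructible function $\mathbf{1}_K$ under an additive correspondence, for which the identity is immediate from $\mathbf{1}_{K\cup L}+\mathbf{1}_{K\cap L}=\mathbf{1}_K+\mathbf{1}_L$.)

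The main obstacle is the identification of the flat limit in the continuity argument: making the upper semicontinuity of the conormal bundle precise and combining it correctly with flat/weak convergence of currents, and appealing to the right uniqueness statement for the conormal cycle (including the degenerate cases). By comparison, the reduction and combinatorial check underlying the valuation property, though somewhat tedious, are routine once the hyperplane lemma and the polytopal face decomposition are in place.
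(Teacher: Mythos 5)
This proposition is quoted from Alesker--Fu; the paper contains no proof of it, so there is no internal argument to compare yours with, and I can only assess your proposal on its own terms.

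Your continuity argument is sound in outline and is the standard one: the parallel-body parametrization $\CNC(K)=\Phi_{K*}[\partial(K+B)]$ gives the uniform mass bound, Federer--Fleming gives subsequential flat limits, and the limit is identified by support containment plus a normalization. The one step you should make precise is the appeal to ``uniqueness of the conormal cycle among integral cycles with these properties'': what actually does the work is that the limit cycle is supported in the conormal bundle of $K$, which is bi-Lipschitz to a sphere, so the constancy theorem forces it to be an integer multiple of $\CNC(K)$, and the Gauss--Bonnet pairing pins the multiple down to $1$ (with $n=1$ and lower-dimensional $K$ treated separately). That is standard and fixable.

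The genuine gap is in the valuation part: the ``elementary lemma'' you invoke is false. Take $K=[0,2]\times[0,1]$ and $L=[1,3]\times[0,1]$ in $\R^2$: then $K\cup L$ is convex and neither body contains the other, but there is no hyperplane $H$ with $K=C\cap H^+$ and $L=C\cap H^-$, since that would force $K\cap L=C\cap H$ to lie in a hyperplane, whereas here $K\cap L$ has nonempty interior. The correct classical statement is only that $K\setminus L$ and $L\setminus K$ can be separated by a hyperplane $H$, which yields $C\cap H^+=K\cap H^+$, $C\cap H^-=L\cap H^-$, $(K\cap L)\cap H^+=L\cap H^+$ and $(K\cap L)\cap H^-=K\cap H^-$; to deduce the valuation identity one must then apply the hyperplane-cut identity to all four bodies $K$, $L$, $C$, $K\cap L$ and dispose of the four slice terms on $H$ (e.g.\ by induction on dimension, or by arranging the separation so that the slices coincide). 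So the single displayed identity for $C$ does not suffice as written, and your reduction does not yet prove the valuation property. The parenthetical alternative via normal cycles of constructible functions is not a repair either: linearity of that assignment on indicator functions of convex bodies is essentially the additivity you are trying to establish (and the constructible/characteristic-cycle machinery is set up for definable sets, not arbitrary convex bodies), so invoking it here is circular unless you import a substantially stronger external theorem.
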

		Let $\Omega^{n-1}(V\times S(V^*))^{tr}$ denote the space of translation invariant differential forms on $V\times S(V^*)$. From the previous proposition, one easily deduces that for a translation invariant differential form $\omega\in\Omega^{n-1}(V\times S(V^*))$, the map
		\begin{align*}
			\mathcal{K}(V)&\rightarrow \R\\
			K&\mapsto \CNC(K)[\omega]
		\end{align*}
		defines a continuous translation invariant valuation on $\mathcal{K}(V)$.\\
		Set $\Omega^{k,n-k-1}(V\times S(V^*))^{tr}:=\Lambda^k V\otimes \Omega^{n-k-1}(S(V^*))$. Then the space of translation invariant differential $(n-1)$-forms on $V\times S(V^*)$ decomposes as 
		\begin{align*}
			\Omega^{n-1}(V\times S(V^*))^{tr}=\bigoplus\limits_{k=0}^{n-1}\Omega^{k,n-k-1}(V\times S(V^*))^{tr}.
		\end{align*}
		\begin{theorem}[\cite{Alesker:Valuations_on_manifolds_1} Theorem 5.2.1]
			\label{theorem_representation_smooth_val_conv_bodies_using_normal_cycle}
			The map
			\begin{align*}
				\Omega^{k,n-k-1}(V\times S(V^*))^{tr}&\rightarrow \Val_k(V)^{sm}\\
				\omega&\mapsto\left(K\mapsto \CNC(K)[\omega]\right)
			\end{align*}
			is surjective for $0\le k\le n-1$.
		\end{theorem}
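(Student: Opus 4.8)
The plan is to check that the construction lands in $\Val_k(V)^{sm}$ and is $\GL(V)$-equivariant, then to reduce surjectivity to the known representability of smooth valuations by the normal cycle, and finally to carry out the translation-invariance and homogeneity bookkeeping; I expect this last reduction to be the real obstacle.

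First I would verify that $\psi_\omega(K):=\CNC(K)[\omega]$ is a well-defined continuous valuation. Since all conormal cycles of bodies contained in a fixed ball are supported in one common compact subset of $V\times S(V^*)$, continuity of $\psi_\omega$ follows from the flat-continuity of $\CNC$ in Proposition~\ref{proposition_continuity_conormal_cycle}, and the valuation property is inherited from $\CNC$. Writing $\omega=\sum_i\xi_i\wedge\eta_i$ with $\xi_i\in\Lambda^kV$ and $\eta_i\in\Omega^{n-k-1}(S(V^*))$, the homothety $m_t\colon x\mapsto tx$ acts trivially on the sphere factor and by $t^k$ on $\Lambda^kV$, so $\psi_\omega(tK)=t^k\psi_\omega(K)$ and $\psi_\omega\in\Val_k(V)$. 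For smoothness I would use that $\GL(V)$ acts on $V\times S(V^*)$, that the conormal cycle transforms equivariantly, and that $\pi(g)\psi_\omega=\psi_{g\cdot\omega}$, where $g\cdot\omega$ denotes the natural --- smooth, indeed rational --- action of $\GL(V)$ on translation invariant forms; then $g\mapsto\pi(g)\psi_\omega$ is smooth, so $\psi_\omega\in\Val_k(V)^{sm}$ and $\omega\mapsto\psi_\omega$ is $\GL(V)$-equivariant.

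For surjectivity I would invoke the general representation theorem for smooth valuations on the manifold $V$: every $\mu\in\Val(V)^{sm}$ can be written as $\mu(K)=\CNC(K)[\tilde\omega]+\int_K\phi$ for some $\tilde\omega\in\Omega^{n-1}(V\times S(V^*))$ and $\phi\in\Omega^n(V)$, where $\tilde\omega$ is determined only modulo the kernel of the conormal cycle operator. If $\mu$ is $k$-homogeneous with $k\le n-1$, the term $\int_K\phi$ feeds only the $n$-homogeneous component and hence vanishes, so $\mu=\psi_{\tilde\omega}$. It then remains to replace $\tilde\omega$ by a translation invariant form of bidegree $(k,n-k-1)$: translation invariance of $\mu$ forces $\tau_a^*\tilde\omega-\tilde\omega$ into the kernel of the conormal cycle operator for every translation $\tau_a$, $a\in V$, and a Rumin-type cohomological argument then produces a translation invariant representative $\omega_0\in\Omega^{n-1}(V\times S(V^*))^{tr}$ with $\mu=\psi_{\omega_0}$. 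Decomposing $\omega_0$ into its bidegree components and using the scaling relation above, the $k$-homogeneity of $\mu$ singles out the $(k,n-k-1)$-part, which is the form $\omega$ we want.

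An alternative route, avoiding the general manifold representation theorem, is purely representation-theoretic. As noted above, the image $W\subset\Val_k(V)^{sm}$ of the map is a nonzero $\GL(V)$-invariant subspace --- nonzero because it contains the $k$-th intrinsic volume --- so its closure in the Banach space $\Val_k(V)$ is a nonzero closed $\GL(V)$-invariant subspace and hence equals $\Val_k(V)$ by Alesker's irreducibility theorem; one then upgrades the density of $W$ to $W=\Val_k(V)^{sm}$ using that $\Omega^{k,n-k-1}(V\times S(V^*))^{tr}$ is the space of smooth vectors of an admissible $\GL(V)$-representation of finite length while $\Val_k(V)$ is irreducible admissible. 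Either way, the main obstacle is the passage from ``the image is large'' to ``the image is all of $\Val_k(V)^{sm}$'': in the first route it is concentrated in the representability theorem (whose proof rests on an explicit description of the kernel of the normal cycle map together with a Hahn--Banach duality argument), and in the second it is the Casselman--Wallach-type closedness statement needed to turn a dense equivariant image into a surjection onto smooth vectors.
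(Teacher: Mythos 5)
This statement is not proved in the paper at all: it is quoted verbatim from Alesker (\emph{Valuations on manifolds I}, Theorem 5.2.1), so the only fair comparison is with Alesker's argument, whose skeleton is exactly your ``alternative route'': irreducibility plus a Casselman--Wallach closed-image argument. Your first route, however, is circular. The ``general representation theorem for smooth valuations on the manifold $V$'' that you invoke --- every $\mu\in\Val(V)^{sm}$ is of the form $\CNC(\cdot)[\tilde\omega]+\int_K\phi$ --- is not an independent input: on a manifold, smooth valuations are \emph{defined} as functionals of that form, and the nontrivial content of the theorem you are asked to prove is precisely that the representation-theoretic smooth vectors of $\Val_k(V)$ (smoothness of the $\GL(V)$-orbit map) admit such a representation, and by a translation invariant form. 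So the first paragraph of that route assumes the conclusion, and only the (also nontrivial, merely sketched) replacement of $\tilde\omega$ by a translation invariant form of pure bidegree is genuinely addressed.

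In the second route the main gap is the parity issue. By McMullen plus the Irreducibility Theorem, $\Val_k(V)$ is \emph{not} irreducible for $1\le k\le n-1$; it splits as $\Val_k^{+}(V)\oplus\Val_k^{-}(V)$ into even and odd parts, each of which is irreducible. A nonzero closed $\GL(V)$-invariant subspace therefore need not be everything, and your witness --- the $k$-th intrinsic volume --- is even, so your argument only yields density in $\Val_k^{+}(V)$. To close the argument you must exhibit a form whose associated valuation has nonzero odd component (for each $1\le k\le n-1$), or otherwise show the image meets $\Val_k^{-}(V)$ nontrivially; this is exactly the kind of point Alesker has to handle. Two further steps you assert but should justify: that $\Omega^{k,n-k-1}(V\times S(V^*))^{tr}\cong\Lambda^kV\otimes\Omega^{n-k-1}(S(V^*))$ is an admissible $\GL(V)$-representation of moderate growth and finite length (it is a finite-dimensional twist of a degenerate principal series, so this is true but not free), and that the Casselman--Wallach closed-range theorem then converts density into surjectivity onto the smooth vectors. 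With the odd-parity witness supplied and these points made precise, your second route is essentially the cited proof; the first route should be discarded.
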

		The kernel of this map was described by Bernig and Bröcker in \cite{Bernig_Broecker:Valuations_manifolds_Rumin_cohomology}. It uses a certain second order differential operator $D:\Omega^{n-1}(V\times S(V^*))\rightarrow \Omega^n(V\times S(V^*))$ defined on the contact manifold $V\times S(V^*)$, which is called the \emph{Rumin operator} (see \cite{Rumin:formes_differential_contact}).
		\begin{theorem}[\cite{Bernig_Broecker:Valuations_manifolds_Rumin_cohomology} Theorem 2.2]
			\label{theorem_kernel_theorem_convex_bodies}
			$\omega\in \Omega^{k,n-k-1}(V\times S(V^*))^{tr}$ induces the trivial valuation if and only if
			\begin{enumerate}
				\item $D\omega=0$,
				\item $\pi_*\omega=0$.
			\end{enumerate}
			Here $\pi:V\times S(V^*)\rightarrow V$ is the natural projection and $\pi_*:\Omega^{n-1}(V\times S(V^*))^{tr}\rightarrow C^\infty(V)$ is the fiber integration.
		\end{theorem}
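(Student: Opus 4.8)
Write $\Phi\colon\Omega^{n-1}(V\times S(V^*))^{tr}\to\Val(V)$ for the map $\omega\mapsto\big(K\mapsto\CNC(K)[\omega]\big)$ of Theorem~\ref{theorem_representation_smooth_val_conv_bodies_using_normal_cycle}; the claim is that $\ker\Phi=\{\omega:\ D\omega=0,\ \pi_*\omega=0\}$, and I would establish the two inclusions separately. Throughout I use two properties of the conormal cycle: $\CNC(K)$ has no boundary, and it is a \emph{Legendrian} cycle, so that $\CNC(K)[\gamma]=0$ for every form $\gamma$ in the ideal generated by the contact form $\alpha$ of $V\times S(V^*)$ --- here one uses that the tangent planes of $\CNC(K)$ are Lagrangian in the contact distribution. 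I also recall that Rumin's operator has the shape $D\omega=d(\omega+\beta_\omega)$, with $\beta_\omega$ a canonical form in the ideal generated by $\alpha$; in particular $D\omega=0$ precisely when $\omega$ differs, by a form in that ideal, from a closed $(n-1)$-form.

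\emph{The ``if'' direction.} Suppose $D\omega=0$, so that $\omega':=\omega+\beta_\omega$ is closed; by the Legendrian property $\CNC(K)[\omega]=\CNC(K)[\omega']$ for every $K$. Since $\CNC\colon\mathcal K(V)\to I_{n-1}(V\times S(V^*))$ is continuous and $\mathcal K(V)$ is path-connected, all conormal cycles represent one homology class, which the case of a point identifies as that of the fibre $\{x\}\times S(V^*)$, because $\CNC(\{x\})=\{x\}\times S(V^*)$. Pairing the closed form $\omega'$ with this fixed class thus gives $\CNC(K)[\omega']=\int_{\{x\}\times S(V^*)}\omega'=(\pi_*\omega)(x)$; here $\beta_\omega$ drops out of the fibre integral because, lying in the ideal generated by $\alpha$, it has positive horizontal degree. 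As $\pi_*\omega$ is a translation invariant function on $V$, hence constant, $\Phi(\omega)$ is a multiple of the Euler characteristic, and it vanishes once $\pi_*\omega=0$.

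\emph{The ``only if'' direction.} Suppose $\Phi(\omega)=0$. Evaluating on a point gives $\pi_*\omega=\CNC(\{x\})[\omega]=0$ at once, so the content is $D\omega=0$. I would pass to smooth bodies: by Lemma~\ref{lemma_conormal-cycle-smooth-body}, $0=\CNC(K)[\omega]=\int_{S(V^*)}(d'h_K\times Id)^*\omega$ for every support function $h_K$ of a smooth convex body with positive Gauss curvature, so the form $(d'h_K\times Id)^*\omega$ on $S(V^*)$ --- automatically closed, being of top degree on the $(n-1)$-dimensional manifold $S(V^*)$ --- has vanishing integral and is hence exact, for all such $h_K$. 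The task, which is also the main obstacle, is to promote this ``for every body'' statement to the pointwise identity $D\omega=0$. For this I would differentiate in the body --- replacing $h_K$ by $h_K+th_L$ and isolating the term linear in $t$ --- so as to control the pull-back of $\omega$ along every ``gradient graph'' Legendrian embedding $v\mapsto(d'h_K(v),v)$ together with all its first-order deformations; since these embeddings are flexible enough to detect the behaviour of $\omega$ transverse to the Legendrian directions up to the order relevant for a second-order operator, and since $D$ is exactly the obstruction to $\omega$ being closed modulo the ideal generated by $\alpha$, one concludes $D\omega=0$. Carrying this out needs a careful local analysis of the contact structure of $V\times S(V^*)$. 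Two alternatives are worth noting: one can combine the ``if'' direction with Alesker's description of $\Val_k(V)^{sm}$ and an explicit computation of the translation invariant Rumin cohomology of $V\times S(V^*)$, comparing dimensions inside each $\mathrm{SO}(n)$-isotypic component; or one can compute the Goodey--Weil distribution of $\Phi(\omega)$ directly from $\omega$ and read off the conclusion.
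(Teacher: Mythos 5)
There is no proof of this statement in the paper to compare against: it is quoted verbatim from Bernig--Br\"ocker, and the paper only proves the symplectic analogue (Theorem \ref{maintheorem_kernel_theorem}). Judged on its own, your ``if'' direction is essentially the standard argument and is fine in outline: $D\omega=0$ means $\omega+\xi$ is closed for the canonical vertical $\xi$, the Legendrian property kills $\xi$, and pairing with the fixed homology class of $\CNC(\{x\})=\{x\}\times S(V^*)$ gives $(\pi_*\omega)(x)$ (the vertical correction dies on the fibre since $\alpha$ restricts to zero there). The one step you should tighten is ``continuous $+$ path-connected $\Rightarrow$ one homology class'': flat continuity alone does not give constancy of the class; instead exhibit an explicit filling, e.g.\ for smooth strictly convex $K$ push forward $[0,1]\times S(V^*)$ under $(t,v)\mapsto(d'h_{K_t}(v),v)$ with $K_t=tK+(1-t)\{x\}$, whose boundary is $\CNC(K)-\CNC(\{x\})$, and then pass to general $K$ using that pairing a \emph{fixed closed} form against closed currents is flat-continuous.

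The genuine gap is in the ``only if'' direction, and it sits exactly where you flag ``the main obstacle'': nothing in your sketch actually derives $D\omega=0$ from the vanishing of the valuation. The observation that each pullback $(d'h_K\times Id)^*\omega$ is exact on $S(V^*)$ is never used and cannot be, and the ``flexibility of gradient-graph embeddings'' paragraph is a restatement of the goal, not an argument. What is needed (and is the actual content of Bernig--Br\"ocker's proof, mirrored in the paper's proof of Theorem \ref{maintheorem_kernel_theorem}) is twofold. First, a first-variation identity of the type of Proposition \ref{proposition:formula_first_variation}: differentiating $0=\CNC(K+tL)[\omega]$ at $t=0$ yields $\CNC(K)[\pi_2^*h_L\wedge i_RD\omega]=0$ for all $K$ and all smooth strictly convex $L$, whence $D\omega$ annihilates every tangent plane of every conormal cycle of a smooth strictly convex body; this uses that $D\omega$ is vertical, so only its contraction with the Reeb field survives, and that $\CNC(K+tL)$ is obtained from $\CNC(K)$ by the flow $(x,v)\mapsto(x+t\,d'h_L(v),v)$. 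Second, a pointwise linear-algebra step: the tangent planes so obtained realize, at each point of the contact hyperplane, all ``strictly positive'' Lagrangian subspaces for $d\alpha$, and one needs the contact analogue of Lemma \ref{lemma_positive_isotropic_subspace} (Bernig--Br\"ocker's Lemma 1.4) saying that a form vanishing on all such subspaces lies in the ideal generated by $\alpha$ and $d\alpha$; combined with the structural facts that $D\omega$ is vertical and $i_RD\omega$ is primitive on the contact distribution, this forces $D\omega=0$. Without these two ingredients the hard implication is unproved, so as it stands the proposal establishes only the easy half of the theorem.
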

		Note that the first condition is always satisfied for $k=0$, while the second is always satisfied for $k\ne0$. \\
		
		As we will need it in the following sections, let us discuss the Rumin operator in more detail.\\
		On $V\times S(V^*)$ there exists a canonical distribution of hyperplanes $H\subset T(V\times S(V^*))$, given by 
		\begin{align*}
		H_{(x,v)}=\ker (v\circ d\pi|_{(x,v)}) \quad \text{for }(x,v)\in V\times S(V^*).
		\end{align*}
		In other words, this hyperplane distribution is given by the kernel of the no-where vanishing $1$-form $\alpha\in \Omega^1(V\times S(V^*))$:
		\begin{align*}
		\alpha|_{(x,v)}:=\langle v, d\pi\cdot \rangle\quad \text{for }(x,v)\in V\times S(V^*).
		\end{align*}
		The restriction of $d\alpha$ to each hyperplane is non-degenerate, so the distribution $H$ is a \emph{contact distribution}. Due to the non-degeneracy of $d\alpha$ on the contact hyperplanes, one can introduce a unique vector field $R$ on $V\times S(V^*)$ such that $i_R \alpha=1$, $i_Rd\alpha=0$, which is called the \emph{Reeb vector field}. Let us call a form $\omega\in\Omega^k(V\times S(V^*))$ \emph{vertical} if its restriction to the contact distribution vanishes. It is easy to see that this is equivalent to $\alpha\wedge\omega=0$. Plugging in $R$, this yields $\omega=\alpha\wedge i_R\omega$ for vertical differential forms.\\
		One can show that for any $\omega\in\Omega^{n-1}(V\times S(V^*))$, there exists a unique vertical form $\xi$ such that $d(\omega+\xi)$ is vertical. In this case, the Rumin operator of $\omega$ is defined as $D\omega:=d(\omega+\xi)$.\\
		\\
		
		The proof of Theorem \ref{theorem_kernel_theorem_convex_bodies} actually shows the following
		\begin{lemma}
			\label{lemma:Domega=0_relation_conormal_cycle}
			For an open subset $U\subset S(V^*)$ and $\omega\in \Omega^{n-1}(V\times S(V^*))$ the following are equivalent
			\begin{enumerate}
				\item $D\omega=0$ on $V\times U$.
				\item $\CNC(K)[\pi_2^*\phi\wedge i_RD\omega]=0$ for all $K\in\mathcal{K}(V)$, $\phi\in C^\infty(S(V^*))$ with $\supp \phi\subset U$. Here $\pi_2:V\times S(V^*)\rightarrow S(V^*)$ denotes the projection onto the second factor.
			\end{enumerate}
		\end{lemma}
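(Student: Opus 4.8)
The plan is to note that every ingredient of the proof of Theorem~\ref{theorem_kernel_theorem_convex_bodies} in \cite{Bernig_Broecker:Valuations_manifolds_Rumin_cohomology} is local on the cosphere bundle, so that the argument restricts verbatim to $V\times U$. The implication $(1)\Rightarrow(2)$ is then immediate: if $D\omega=0$ on $V\times U$, then $i_RD\omega=0$ there, while $\pi_2^*\phi$ vanishes off $V\times U$ whenever $\supp\phi\subset U$; hence $\pi_2^*\phi\wedge i_RD\omega\equiv 0$ on $V\times S(V^*)$ and $\CNC(K)[\pi_2^*\phi\wedge i_RD\omega]=0$ for every $K\in\mathcal{K}(V)$.

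For $(2)\Rightarrow(1)$ I would use the hypothesis only for smooth convex bodies $K$ with strictly positive Gauss curvature. For such $K$, Lemma~\ref{lemma_conormal-cycle-smooth-body} gives $\CNC(K)=(d'h_K\times\mathrm{Id})_*[S(V^*)]$, and since $\pi_2\circ(d'h_K\times\mathrm{Id})=\mathrm{Id}_{S(V^*)}$,
\begin{align*}
	\CNC(K)[\pi_2^*\phi\wedge i_RD\omega]=\int_{S(V^*)}\phi\cdot(d'h_K\times\mathrm{Id})^*(i_RD\omega).
\end{align*}
By (2) the left-hand side vanishes for every $\phi\in C^\infty(S(V^*))$ with $\supp\phi\subset U$, so the $(n-1)$-form $(d'h_K\times\mathrm{Id})^*(i_RD\omega)$ is identically zero on $U$ --- for every such $K$. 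In other words, at each point $(x,v)=(d'h_K(v),v)$ with $v\in U$ the form $i_RD\omega|_{(x,v)}$ restricts to zero on the tangent space $T_{(x,v)}\CNC(K)$.

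It remains to upgrade this into $D\omega=0$ on $V\times U$, which is the contact-geometric core of \cite{Bernig_Broecker:Valuations_manifolds_Rumin_cohomology}. Since $D\omega$ is vertical, $D\omega=\alpha\wedge i_RD\omega$, so it suffices to show that for every $(x_0,v_0)\in V\times U$ the restriction of $i_RD\omega$ to the contact hyperplane $H_{(x_0,v_0)}$ vanishes. By construction of the Rumin operator this restriction is a primitive form (in the sense of symplectic linear algebra) of middle degree $n-1$ on the $2(n-1)$-dimensional symplectic space $(H_{(x_0,v_0)},d\alpha|_{H_{(x_0,v_0)}})$. On the other hand, letting $K$ range over the smooth bodies of positive curvature whose conormal cycle passes through $(x_0,v_0)$, the tangent spaces $T_{(x_0,v_0)}\CNC(K)$ range over all \emph{positive} Legendrian (hence Lagrangian) subspaces of $H_{(x_0,v_0)}$: given $(x_0,v_0)$ one may translate a fixed smooth positive-curvature body so that $d'h_K(v_0)=x_0$ and then perturb it near $v_0$ to prescribe an arbitrary positive definite Hessian of $h_K$ at $v_0$. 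The linear-algebraic fact underlying \cite{Bernig_Broecker:Valuations_manifolds_Rumin_cohomology} --- a primitive middle-degree form that vanishes on the non-empty open cone of positive Lagrangian subspaces is identically zero --- applies fibrewise and yields $i_RD\omega|_{H_{(x_0,v_0)}}=0$, hence $D\omega|_{(x_0,v_0)}=0$. As $(x_0,v_0)\in V\times U$ was arbitrary, $D\omega=0$ on $V\times U$.

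The main obstacle is essentially a matter of bookkeeping: one has to make sure that the two inputs borrowed from \cite{Bernig_Broecker:Valuations_manifolds_Rumin_cohomology} --- the primitivity of $i_RD\omega|_H$ and the fact that all positive Lagrangian planes occur as tangent spaces of conormal cycles --- really are statements about a single fibre of the cosphere bundle, so that they may be invoked over an arbitrary open set $U\subset S(V^*)$ rather than over the whole sphere. This is the case, since the Rumin operator is a differential (hence local) operator and the remaining assertions only involve one contact hyperplane at a time.
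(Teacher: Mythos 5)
Your argument is correct and is essentially the paper's: the paper proves this lemma only by remarking that the Bernig--Br\"ocker proof of Theorem \ref{theorem_kernel_theorem_convex_bodies} is local on $S(V^*)$, and your reconstruction (evaluation on smooth positive-curvature bodies via Lemma \ref{lemma_conormal-cycle-smooth-body}, realizing all strictly positive Legendrian planes as tangent spaces of conormal cycles, and the fibrewise primitive-form lemma) is exactly that localized argument.
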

		This result relies on a version of the following observation, which was stated in this form by Wannerer in the proof of \cite{Wannerer:module_unitary_area_measures} Proposition 2.2.:
		\begin{proposition}
			\label{proposition:formula_first_variation}
			If $\mu\in \Val_k(V)$ is a smooth valuation represented by $\omega\in\Omega^{k,n-1-k}(V\times S(V^*))$, then for all $K,L\in\mathcal{K}(V)$
			\begin{align*}
				\frac{d}{dt}\Big|_0\mu(K+tL)=\CNC(K)[\pi_2^*h_L\wedge i_RD\omega].
			\end{align*}
		\end{proposition}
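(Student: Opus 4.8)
The plan is to reduce the identity to the case of smooth bodies with strictly positive Gauss curvature and then carry out a direct first-variation computation on the conormal cycle, using that $D\omega$ is vertical and that $\CNC(K)$ is a Legendrian cycle. First I would check that both sides are continuous in $K$ (for fixed $L$) and in $L$ (for fixed $K$): the left hand side equals $k\,\bar\mu(K[k-1],L)$ by the homogeneous decomposition and the properties of the polarization, hence is continuous, while the right hand side is continuous by Proposition \ref{proposition_continuity_conormal_cycle}, since $i_RD\omega$ is a fixed smooth form and $\CNC(K)$ stays supported over a bounded subset of $V$ as $K$ ranges over a bounded set. As the smooth convex bodies with strictly positive Gauss curvature are dense in $\mathcal{K}(V)$, it then suffices to treat such $K$ and $L$.

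For such $K$, $L$ and $t\ge0$ I would use Lemma \ref{lemma_conormal-cycle-smooth-body} together with $h_{K+tL}=h_K+th_L$ to write $\CNC(K+tL)=(\Phi_t)_*[S(V^*)]$, where $\Phi_t(v)=(d'h_K(v)+t\,d'h_L(v),v)$; here $\Phi_t=F_t\circ\Phi_0$, with $F_t(x,v)=(x+t\,d'h_L(v),v)$ the flow of the vector field $X_{(x,v)}=(d'h_L(v),0)$ on $V\times S(V^*)$, which is smooth because $L$ has a smooth support function. A convenient normalization is to replace $\omega$ by $\omega+\xi$, where $\xi$ is the vertical $(n-1)$-form from the definition of the Rumin operator, so that $d(\omega+\xi)=D\omega$; this changes nothing, since $\CNC(M)$ is Legendrian and therefore annihilates every form divisible by $\alpha$, so $\CNC(M)[\xi]=0$ for all $M$ and $\omega+\xi$ still represents $\mu$. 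Then $\mu(K+tL)=\int_{S(V^*)}\Phi_t^*(\omega+\xi)$, and differentiating at $t=0$ and applying Cartan's formula gives $\frac{d}{dt}\big|_0\mu(K+tL)=\int_{S(V^*)}\Phi_0^*\bigl(d\,i_X(\omega+\xi)+i_XD\omega\bigr)$. The first summand is exact after pullback, hence integrates to zero over the closed manifold $S(V^*)$ by Stokes, leaving $\frac{d}{dt}\big|_0\mu(K+tL)=\CNC(K)[i_XD\omega]$. (Considering $t\ge0$ is enough, because both $t\mapsto\mu(K+tL)$ and $t\mapsto\int_{S(V^*)}\Phi_t^*(\omega+\xi)$ are polynomials in $t$, by McMullen's theorem and polarization on the one hand and by the affine dependence of $\Phi_t$ on $t$ on the other.)

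To finish I would use that $D\omega$ is vertical, so $D\omega=\alpha\wedge i_RD\omega$, whence $i_XD\omega=\alpha(X)\,i_RD\omega-\alpha\wedge i_Xi_RD\omega$. The $\alpha$-divisible term is annihilated by the Legendrian cycle $\CNC(K)$, and Euler's relation for the $1$-homogeneous function $h_L$ yields $\alpha(X)=\langle v,d'h_L(v)\rangle=h_L$, that is, $\alpha(X)=\pi_2^*h_L$ as a function on $V\times S(V^*)$. Therefore $\CNC(K)[i_XD\omega]=\CNC(K)[\pi_2^*h_L\wedge i_RD\omega]$, which is the claim.

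I expect the differential-geometric core to be essentially routine once everything is set up; the delicate points are rather the reduction step and the justification of differentiating under the integral sign — in particular the fact that $t\mapsto\mu(K+tL)$ is genuinely differentiable at $0$, which rests on McMullen's polynomiality and the continuity of mixed valuations. The one genuinely clever ingredient is passing to the Rumin-normalized representative $\omega+\xi$, which makes all $d\xi$-contributions vanish automatically through the Legendrian property of the conormal cycle.
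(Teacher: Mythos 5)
Your proof is correct, and it is essentially the standard argument: the paper itself does not prove this proposition but cites it from Wannerer's work, where the proof proceeds exactly as you do — reduce by continuity to smooth bodies with positive Gauss curvature, write $\CNC(K+tL)$ as the pushforward under the flow of the field $(d'h_L(v),0)$, pass to the Rumin-normalized representative $\omega+\xi$, apply Cartan's formula and Stokes, and use verticality of $D\omega$ together with $\alpha(X)=\pi_2^*h_L$ via Euler's relation. The only point worth stating explicitly is the two-step density argument (first smooth $L$ fixed, varying $K$ via flat convergence with locally uniform mass bounds; then fixed $K$, varying $L$ via uniform convergence of $h_{L_j}$), which you indicate but only partially justify; this is routine and does not affect correctness.
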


		In the final part of this subsection, we are going to establish a relation between the vertical support of a smooth valuation and the differential from representing it. 
		\begin{proposition}
				\label{proposition_characterization_support_GW_smooth_valuation}
				For $1\le k\le n-1$ let $\omega\in\Omega^{k,n-k-1}(V\times S(V^*))^{tr}$ represent a smooth valuation $\mu\in\Val_k(V)^{sm}$. Then $\vsupp \mu = \pi_2(\supp D\omega)$.
			\end{proposition}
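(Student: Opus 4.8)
The plan is to establish both inclusions, using Proposition~\ref{proposition:formula_first_variation} as the bridge between the Goodey--Weil distribution of $\mu$ (which detects $\vsupp\mu$) and the form $D\omega$. First observe that, since $\omega$ is translation invariant and translations of $V$ lift to contactomorphisms of $V\times S(V^*)$ (they preserve the contact form), the Rumin operator is natural with respect to them, so $D\omega$ is translation invariant and $\supp D\omega=V\times C$ with $C:=\pi_2(\supp D\omega)$ a closed subset of $S(V^*)$; this is the set to be identified with $\vsupp\mu$. I will use the following linear extension of the first variation formula: for fixed $M\in\mathcal{K}(V)$ the functional $\bar\mu(M[k-1],\cdot)$ is Minkowski additive and $1$-homogeneous, hence extends uniquely to a linear functional on the linear span of support functions inside $C(S(V^*))$, and Proposition~\ref{proposition:formula_first_variation} then states that
\[
k\,\bar\mu(M[k-1],[\phi])=\CNC(M)[\pi_2^*\phi\wedge i_R D\omega]
\]
for every $\phi$ in that span (the right-hand side being meaningful because $i_R D\omega$ is smooth while $\CNC(M)$ is representable by integration). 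Since every $\phi\in C^\infty(S(V^*))$ is a difference of support functions of smooth bodies with positive Gauss curvature, combining this with Theorem~\ref{theorem_GW_for_Val} yields, for such $K$,
\[
k\,\GW(\mu)[h_K^{\otimes(k-1)}\otimes\phi]=\CNC(K)[\pi_2^*\phi\wedge i_R D\omega].
\]

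To prove $C\subseteq\vsupp\mu$, fix $v\notin\vsupp\mu$, so that $\Delta(v)\notin\supp\GW(\mu)$, and choose an open neighborhood $W$ of $v$ with $W^k\cap\supp\GW(\mu)=\emptyset$. Because $\supp\GW(\mu)$ is contained in the diagonal, for any $\phi$ with $\supp\phi\subseteq W$ the support $(\supp h_K)^{k-1}\times\supp\phi$ of $h_K^{\otimes(k-1)}\otimes\phi$ can meet $\supp\GW(\mu)$ only in diagonal points $(w,\dots,w)$ with $w\in W$, of which there are none; hence $\GW(\mu)[h_K^{\otimes(k-1)}\otimes\phi]=0$, and thus $\CNC(K)[\pi_2^*\phi\wedge i_R D\omega]=0$ for every smooth $K$ with positive Gauss curvature. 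By the continuity of $\CNC$ (Proposition~\ref{proposition_continuity_conormal_cycle}) together with the density of such bodies, this vanishing persists for all $K\in\mathcal{K}(V)$, so Lemma~\ref{lemma:Domega=0_relation_conormal_cycle} gives $D\omega=0$ on $V\times W$, i.e.\ $v\notin C$.

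For the reverse inclusion $\vsupp\mu\subseteq C$, by Lemma~\ref{lemma_property_vertical_support} it is enough to verify that the closed set $C$ has the defining separation property: $\mu(K)=\mu(L)$ whenever $h_K=h_L$ on some neighborhood $U$ of $C$. Let $M_t\in\mathcal{K}(V)$ denote the convex body with support function $(1-t)h_K+th_L$ for $t\in[0,1]$, which is a support function as a convex combination of such. Then $t\mapsto\mu(M_t)$ is a polynomial, and differentiating, together with the extended first variation formula, gives
\[
\frac{d}{dt}\mu(M_t)=k\,\bar\mu(M_t[k-1],[h_L-h_K])=\CNC(M_t)[\pi_2^*(h_L-h_K)\wedge i_R D\omega].
\]
The form $\pi_2^*(h_L-h_K)\wedge i_R D\omega$ vanishes identically: on $V\times C=\supp D\omega$ the factor $\pi_2^*(h_L-h_K)$ vanishes since $C\subseteq U$, while off $V\times C$ one has $i_R D\omega=0$. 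Consequently $\mu(M_t)$ is constant in $t$, so $\mu(K)=\mu(M_0)=\mu(M_1)=\mu(L)$, which completes the proof.

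The crux is that Proposition~\ref{proposition:formula_first_variation} converts the analytic information carried by $\GW(\mu)$ near a diagonal point into the vanishing of $D\omega$ near the corresponding fibre of $\pi_2$, and conversely; the identically-vanishing-form argument for the second inclusion is then immediate. I expect the points requiring the most care to be the linear extension of $\bar\mu(M[k-1],\cdot)$ and of the pairing $\CNC(M)[\pi_2^*\,\cdot\,\wedge i_R D\omega]$ to the merely continuous functions $h_K$, and the passage from smooth bodies with positive Gauss curvature to arbitrary convex bodies in the first inclusion, which rests on the flat continuity of $\CNC$ and uniform mass bounds on a fixed ball; each of these is routine once set up with care.
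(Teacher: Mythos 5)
Your proof is correct, but it follows a genuinely different route from the paper in both inclusions, while drawing on the same basic tools (Proposition \ref{proposition:formula_first_variation} and Lemma \ref{lemma:Domega=0_relation_conormal_cycle}). The paper first iterates the first variation formula $k$ times to obtain the full identity \eqref{equation:GW_conormal_cycle} expressing $\GW(\mu)(\phi_1\otimes\cdots\otimes\phi_k)$ as a conormal-cycle integral; the inclusion $\vsupp\mu\subset\pi_2(\supp D\omega)$ then drops out directly, and for the reverse inclusion the paper runs an iterative construction of functions $\phi_1,\dots,\phi_k$ supported near a given point of $\pi_2(\supp D\omega)$, invoking the kernel theorem (Theorem \ref{theorem_kernel_theorem_convex_bodies}) at each step to keep the intermediate Rumin differentials nonzero, ending at a nonzero multiple of the Euler characteristic. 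You avoid both the iteration and the kernel theorem: for $\pi_2(\supp D\omega)\subseteq\vsupp\mu$ you argue by contrapositive, pairing $\GW(\mu)$ with test functions of the special shape $h_K^{\otimes(k-1)}\otimes\phi$, using that $\supp\GW(\mu)$ lies on the diagonal to get vanishing, passing from smooth positively curved bodies to all bodies by continuity, and then letting Lemma \ref{lemma:Domega=0_relation_conormal_cycle} convert this into $D\omega=0$ over the neighborhood; and for $\vsupp\mu\subseteq\pi_2(\supp D\omega)$ you use the minimality characterization of Lemma \ref{lemma_property_vertical_support} together with the Minkowski interpolation $h_{M_t}=(1-t)h_K+th_L$ and the pointwise vanishing of $\pi_2^*(h_L-h_K)\wedge i_RD\omega$, where the paper instead works directly on the level of the Goodey--Weil distribution. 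The paper's approach buys the stronger identity \eqref{equation:GW_conormal_cycle} for arbitrary products $\phi_1\otimes\cdots\otimes\phi_k$, which is of independent use; your approach is leaner, needing linearity in only one slot of $\bar\mu$ and replacing the delicate nonvanishing iteration by the cleaner equivalence in Lemma \ref{lemma:Domega=0_relation_conormal_cycle}. The points you flag as needing care (extension of $\bar\mu(M[k-1],\cdot)$ and of the pairing to differences of support functions, density of smooth positively curved bodies plus flat continuity of $\CNC$, and translation invariance of $D\omega$ giving $\supp D\omega=V\times C$ --- which in fact is not even needed, since only $\supp D\omega\subset\pi_2^{-1}(C)$ enters) are indeed routine and handled correctly.
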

			\begin{proof}
				Let us start by showing 
				\begin{align}
					\label{equation:GW_conormal_cycle}
					\GW(\mu)(\phi_1\otimes\dots\otimes \phi_k)=&\frac{1}{k!}\CNC(K)\left[ (\pi_2^*\phi_k\wedge i_RD)\dots(\pi_2^*\phi_{1}\wedge i_RD)\omega\right]
				\end{align}
				for any $K\in\mathcal{K}(V)$, $\phi_1,\dots,\phi_k\in C^\infty(S(V^*))$.
				Iterating the formula in Proposition \ref{proposition:formula_first_variation} for smooth convex bodies $L_1,\dots, L_k$ with strictly positive Gauss curvature, we obtain
				\begin{align*}
					\GW(\mu)(h_{L_1}\otimes\dots\otimes h_{L_k})=&\frac{1}{k!}\frac{\partial}{\partial \lambda_1}\Big|_0\dots\frac{\partial}{\partial \lambda_k}\Big|_0\CNC\left(K+\sum_{i=1}^{k}\lambda_iL_i\right)[\omega]\\
					=&\frac{1}{k!}\CNC(K)\left[(\pi_2^*h_{L_k}\wedge i_RD)\dots(\pi_2^*h_{L_1}\wedge i_RD)\omega\right].
				\end{align*}
				As $\GW(\mu)$ is uniquely determined by its values on functions of the form $h_{L_1}\otimes\dots\otimes h_{L_k}$, we see that
				\begin{align*}
					\GW(\mu)(\phi_1\otimes\dots\otimes \phi_k)=\frac{1}{k!}\CNC(K)\left[(\pi_2^*\phi_k\wedge i_RD)\dots(\pi_2^*\phi_{1}\wedge i_RD)\omega\right]
				\end{align*}
				for any $K\in\mathcal{K}(V)$.\\

				Let us show that $\vsupp \mu\subset \pi_2(D\omega)$: Assume that one of the functions $\phi_i$ in the equation above satisfies $\supp \phi_i\cap\pi_2(D\omega)=\emptyset$. As the Goodey-Weil distribution of a valuation is symmetric, we can assume $i=k$, so $\pi_2^*\phi_k\wedge i_RD\omega=0$. Thus the formula above implies $\GW(\mu)(\phi_1\otimes\dots\otimes\phi_k)=0$, and we see that $\supp \GW(\mu)\subset \Delta (\pi_2(\supp D\omega))$, i.e. $\vsupp \mu\subset \pi_2(\supp D\omega)$.\\
				
				For the converse inclusion, fix a point $v\in \pi_2(\supp D\omega)$ and let $U$ be an arbitrary neighborhood of $v$.  We will construct functions $\phi_1,\dots,\phi_k$ with support in $U$ such that $\GW(\mu)(\phi_1\otimes\dots\otimes\phi_k)\ne0$.\\
				Let us start with $\phi_1$: Us $D\omega$ does not vanish identically on $V\times U$ by assumption, Lemma \ref{lemma:Domega=0_relation_conormal_cycle} implies that there exists $\phi\in C^\infty(S(V^*))$ with supp $\phi\subset U$ such that $\CNC(K)[\pi_2^*\phi_1\wedge i_RD\omega]\ne 0$ for some $K\in\mathcal{K}(V)$. In particular, the valuation induced by the differential form $\omega_1:=\pi_2^*\phi_1\wedge i_RD\omega$ is non-trivial. By construction, this differential form is of bidegree $(k-1,n-k)$, i.e. it defines a $(k-1)$-homogeneous valuation. Thus $D\omega_1\ne 0$ if $k\ne 1$ by Theorem \ref{theorem_kernel_theorem_convex_bodies}. Obviously, $\pi_2(\supp D\omega_1)\subset U$.\\
				Repeating this construction, we obtain functions $\phi_1,\dots,\phi_{k}$ with the properties
				\begin{enumerate}
					\item $\supp \phi_i\subset U$,
					\item $\omega_{i+1}:=\pi_2^*\phi_{i+1}\wedge i_RD\omega_i$ defines a non-trivial valuation of degree $k-i-1$.
				\end{enumerate}
				In particular, the map
				\begin{align*}
				K\mapsto \CNC(K)[\omega_k]=\CNC(K)\left[(\pi_2^*\phi_k\wedge i_RD)\dots(\pi_2^*\phi_1\wedge i_RD)\omega\right]
				\end{align*}
				defines a $0$-homogeneous, non-trivial valuation, i.e. it is a constant multiple of the Euler characteristic. Using the expression of the Goodey-Weil distribution in \eqref{equation:GW_conormal_cycle}, we obtain
				\begin{align*}
				\GW(\mu)(\phi_1\otimes\dots\otimes \phi_k)=\frac{1}{k!}\CNC(K) \left[(\pi_2^*\phi_k\wedge i_RD)\dots(\pi_2^*\phi_{1}\wedge i_RD)\omega\right]\ne 0
				\end{align*}
				for any $K\in \mathcal{K}(V)$. As this is true for any neighborhood $U$ of $v$, $\Delta(v)\in \supp \GW(\mu)$, i.e. $v\in \vsupp(\mu)$.
			\end{proof}
			
\section{Dually epi-translation invariant valuations: Support and relation to valuations on convex bodies}
	\label{section_dually_epi-translation_invariant_valuations}
	The homogeneous decomposition for $\VConv(V)$ can be used to construct a version of the Goodey-Weil distributions for homogeneous dually epi-translation invariant valuations. Similar to the definition of the vertical support for translation invariant valuations, this may be used to define the support $\supp\mu\subset V$ of $\mu\in\VConv(V)$. We refer to \cite{Knoerr:support_of_dually_epi-translation_invariant_valuations} for details and only state the following characterization of the support, which may also be used as a definition.
		 \begin{proposition}[\cite{Knoerr:support_of_dually_epi-translation_invariant_valuations} Proposition 6.3]
		 	\label{proposition_support_convex_valuation}
		 	The support of $\mu\in\VConv(V)$ is minimal (with respect to inclusion) among the closed sets $A\subset V$ with the following property: If $f,g\in\Conv(V,\R)$ satisfy $f=g$ on an open neighborhood of $A$, then $\mu(f)=\mu (g)$.
		 \end{proposition}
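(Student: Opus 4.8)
\emph{Proof strategy.}
The plan is to first reduce both parts of the statement to the homogeneous case. If $A$ is a closed set with the stated property and $f=g$ on an open neighbourhood of $A$, then $tf=tg$ on that neighbourhood for every $t\ge 0$, so $\sum_{j}t^{j}\mu_{j}(f)=\mu(tf)=\mu(tg)=\sum_{j}t^{j}\mu_{j}(g)$ as polynomials in $t$; comparing coefficients shows that $A$ has the property for every homogeneous component $\mu_{j}$. Conversely $\supp\mu=\bigcup_{j}\supp\mu_{j}$, so the locality property for $\supp\mu$ follows by summing those for the $\mu_{j}$. Since $0$-homogeneous valuations are constant and have empty support, it remains to treat $\mu\in\VConv_{k}(V)$ with $1\le k\le n$.

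The main tool is the Goodey--Weil distribution $\GW(\mu)$ from \cite{Knoerr:support_of_dually_epi-translation_invariant_valuations}: a compactly supported distribution on $V^{k}$, supported on the diagonal, with $\GW(\mu)[f_{1}\otimes\dots\otimes f_{k}]=\bar\mu(f_{1},\dots,f_{k})$ for all smooth $f_{1},\dots,f_{k}\in\Conv(V,\R)$, where $\bar\mu$ is the polarisation of $\mu$, and with $\supp\mu=\Delta^{-1}(\supp\GW(\mu))$, so that $\supp\GW(\mu)=\Delta(\supp\mu)$. To see that $\supp\mu$ has the property I would first take $f,g$ smooth: if $f=g$ on an open set $U\supseteq\supp\mu$, then $f^{\otimes k}=g^{\otimes k}$ on $U^{k}$, an open neighbourhood of $\supp\GW(\mu)$, hence $\mu(f)=\GW(\mu)[f^{\otimes k}]=\GW(\mu)[g^{\otimes k}]=\mu(g)$. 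For arbitrary $f,g\in\Conv(V,\R)$ agreeing on $U\supseteq\supp\mu$ I would pass to mollifications $f_{\varepsilon}=f*\rho_{\varepsilon}$ and $g_{\varepsilon}=g*\rho_{\varepsilon}$, which are smooth and convex and converge to $f,g$ in $\Conv(V,\R)$; they agree on $\{y:B_{\varepsilon}(y)\subseteq U\}$, which still contains the compact set $\supp\mu$ once $\varepsilon$ is small, so $\mu(f_{\varepsilon})=\mu(g_{\varepsilon})$, and continuity of $\mu$ gives $\mu(f)=\mu(g)$.

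For minimality, let $A$ be closed with the property; by the reduction we may assume $\mu\in\VConv_{k}(V)$ with $k\ge 1$. From the polarisation formula $\bar\mu(f_{1},\dots,f_{k})=\frac{1}{k!}\left.\partial_{t_{1}}\cdots\partial_{t_{k}}\right|_{0}\mu(\sum_{i}t_{i}f_{i})$ and the fact that $\sum_{i}t_{i}f_{i}$ and $\sum_{i}t_{i}g_{i}$ agree near $A$ whenever the $f_{i}$ and $g_{i}$ do, one gets that $\bar\mu(f_{1},\dots,f_{k})$ depends only on the restrictions of the $f_{i}$ to any fixed neighbourhood of $A$. Fix $x\notin A$ and a ball $B=B_{r}(x)$ whose closure avoids some open neighbourhood $U$ of $A$. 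For $\psi_{1},\dots,\psi_{k}\in C_{c}^{\infty}(B)$ I would write $\psi_{i}=(\rho+\psi_{i})-\rho$ with $\rho(y):=M|y-x|^{2}$, where $M$ is large enough that all $\rho+\psi_{i}$ are convex; expanding $\GW(\mu)$ multilinearly gives
\begin{align*}
\GW(\mu)[\psi_{1}\otimes\dots\otimes\psi_{k}]=\sum_{S\subseteq\{1,\dots,k\}}(-1)^{k-|S|}\,\bar\mu(h^{S}_{1},\dots,h^{S}_{k}),
\end{align*}
with $h^{S}_{i}=\rho+\psi_{i}$ for $i\in S$ and $h^{S}_{i}=\rho$ otherwise. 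Each $h^{S}_{i}$ equals $\rho$ on $V\setminus B\supseteq U$, so every summand equals $\bar\mu(\rho,\dots,\rho)$, and $\sum_{S}(-1)^{k-|S|}=0$; hence $\GW(\mu)$ annihilates all such tensors. As their finite sums are dense in $C_{c}^{\infty}(B^{k})$, the distribution $\GW(\mu)$ vanishes on $B^{k}$, which contains $\Delta(x)$; therefore $x\notin\supp\mu$, and $\supp\mu\subseteq A$.

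I expect the genuinely substantial point to be the construction of $\GW(\mu)$ and the proof that its support lies in the diagonal --- the convex-function analogue of the Goodey--Weil theorem together with Alesker's diagonal-support proposition --- which I would simply invoke from \cite{Knoerr:support_of_dually_epi-translation_invariant_valuations}. Given that, the only further subtleties are that mollification preserves agreement near the compact set $\supp\mu$, and that tensor products of test functions on $B$ span a dense subspace of $C_{c}^{\infty}(B^{k})$; both are standard.
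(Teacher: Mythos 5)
This proposition is quoted in the paper from \cite{Knoerr:support_of_dually_epi-translation_invariant_valuations} (Proposition 6.3) without proof, so there is no in-paper argument to compare against; judged on its own terms, your proof is correct and follows the same route as the cited reference: reduce to the homogeneous case via the McMullen-type decomposition, use the Goodey--Weil distribution (compactly supported, concentrated on the diagonal, representing the polarization on smooth convex functions) together with mollification to show $\supp\mu$ has the locality property, and prove minimality by expanding $\psi_1\otimes\dots\otimes\psi_k$ with $\psi_i\in C_c^\infty(B)$ as differences of smooth convex functions agreeing with $\rho$ off $B$, so that $\GW(\mu)$ vanishes near $\Delta(x)$ for $x\notin A$. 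The only points you implicitly rely on --- that $\GW(\mu)$, being compactly supported, may be evaluated on non-compactly supported smooth convex functions such as $\rho+\psi_i$, and that tensor products span a dense subspace of $C_c^\infty(B^k)$ --- are indeed standard and available in the cited reference, so I see no gap.
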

	 	Moreover, the support of such a valuation is always a compact subset, see \cite{Knoerr:support_of_dually_epi-translation_invariant_valuations} Corollary 6.2.
		It turns out that the support is a useful concept that simplifies the topology of $\VConv(V)$. For $A\subset V$ compact, let $\VConv_{k,A}(V)$ denote the space of all valuations $\mu\in\VConv(V)$ with $\supp\mu\subset A$. Then we have the following result:
		\begin{proposition}[\cite{Knoerr:support_of_dually_epi-translation_invariant_valuations} Corollary 6.10]
			\label{proposition_Banach_structures_subspaces_VConv}
			For a compact subset $A\subset V$, the relative topology of $\VConv_{k,A}(V)\subset \VConv(V)$ is induced by a continuous norm. Moreover, this norm equips $\VConv_{k,A}(V)$ with the structure of a Banach space.
		\end{proposition}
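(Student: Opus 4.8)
The plan is to exhibit an explicit continuous norm on $\VConv_{k,A}(V)$ that induces the subspace topology, and then to obtain completeness from the fact that $\VConv_{k,A}(V)$ is a closed subspace of the complete space $\VConv_k(V)$. The two essential inputs are the $k$-homogeneity of the valuations involved and Proposition \ref{proposition_support_convex_valuation}, which guarantees that $\mu(f)$ depends only on the germ of $f$ near $A$ as soon as $\supp\mu\subset A$.

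First I would construct a \textbf{localizing extension operator}. Fix a point $y_0\in A$, a bounded open convex neighborhood $B_1$ of $A$, and an auxiliary norm on $V$. For $f\in\Conv(V,\R)$ set $Ef(x):=\sup_{y\in\overline{B_1}}\bigl(f(y)+f'(y;x-y)\bigr)$, the supremum of the supporting affine functions of $f$ at the points of $\overline{B_1}$ (here $f'(y;\cdot)$ is the directional derivative). I claim $E$ has four properties. (i) $Ef\in\Conv(V,\R)$: it is a supremum of affine functions, and it is finite because the slopes of these functions all lie in the compact set $\partial f(\overline{B_1})$. (ii) $Ef=f$ on $B_1$, hence on a neighborhood of $A$, since each supporting affine function lies below $f$ and agrees with $f$ at its base point. (iii) $E(tf)=tEf$ for all $t\ge 0$, since the directional derivative is positively homogeneous in $f$. (iv) $E$ maps compact subsets of $\Conv(V,\R)$ to relatively compact ones: a compact set $\mathcal{C}$ is uniformly bounded on compacta, hence equi-Lipschitz on $\overline{B_1}$ by convexity, which bounds $\partial f(\overline{B_1})$ uniformly over $f\in\mathcal{C}$ and therefore yields uniform two-sided local bounds for the family $\{Ef:f\in\mathcal{C}\}$; such a family of convex functions is relatively compact. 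The genuinely delicate point is (i): if one instead takes the supremum of \emph{all} affine functions that merely lie below $f$ on a ball, one gets an extended-real-valued function, so the affine functions must be anchored to the subdifferential over $\overline{B_1}$.

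Now for $R>0$ put $\mathcal{C}_R:=\{f\in\Conv(V,\R):\sup_{\overline{B_1}}f\le R,\ f(y_0)\ge -R,\ \mathrm{Lip}(f,\overline{B_1})\le R\}$ and let $\mathcal{K}_R:=\overline{E(\mathcal{C}_R)}$, a compact subset of $\Conv(V,\R)$ by (iv). From (iii) one gets $\tfrac1R\mathcal{K}_R\subset\mathcal{K}_1$ for $R\ge 1$, and every compact $\mathcal{C}\subset\Conv(V,\R)$ satisfies $\mathcal{C}\subset\mathcal{C}_R$, hence $E(\mathcal{C})\subset\mathcal{K}_R$, for $R=R(\mathcal{C})$ large. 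Define $p(\mu):=\sup_{g\in\mathcal{K}_1}|\mu(g)|$; this is one of the seminorms defining the topology of $\VConv_k(V)$. If $\mu\in\VConv_{k,A}(V)$ and $f$ lies in a compact set $\mathcal{C}$, then $\mu(f)=\mu(Ef)$ by Proposition \ref{proposition_support_convex_valuation} (as $Ef=f$ near $A$) and $Ef\in\mathcal{K}_{R(\mathcal{C})}$, so $k$-homogeneity gives
\begin{align*}
	|\mu(f)|\le\sup_{g\in\mathcal{K}_{R(\mathcal{C})}}|\mu(g)|\le R(\mathcal{C})^k\sup_{h\in\mathcal{K}_1}|\mu(h)|=R(\mathcal{C})^k\,p(\mu).
\end{align*}
Thus on $\VConv_{k,A}(V)$ every defining seminorm of $\VConv_k(V)$ is dominated by a multiple of $p$, so the subspace topology there coincides with the topology of the single seminorm $p$; and $p$ is in fact a norm on $\VConv_{k,A}(V)$, since $p(\mu)=0$ forces $\mu\equiv 0$ on $\mathcal{K}_R$ for every $R$ by homogeneity, whence $\mu(f)=\mu(Ef)=0$ for every $f\in\Conv(V,\R)$.

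Finally, $\VConv_{k,A}(V)$ is closed in $\VConv_k(V)$: if $\mu_j\to\mu$ in $\VConv_k(V)$ with $\supp\mu_j\subset A$, then for any $f,g$ agreeing near $A$ we have $\mu_j(f)=\mu_j(g)$ by Proposition \ref{proposition_support_convex_valuation}, and since convergence in $\VConv_k(V)$ is in particular pointwise this passes to the limit, so $\supp\mu\subset A$. As $\VConv_k(V)$ is complete and $\VConv_{k,A}(V)$ is a closed subspace carrying exactly the topology of the norm $p$, the space $(\VConv_{k,A}(V),p)$ is a Banach space. Beyond Proposition \ref{proposition_support_convex_valuation} and homogeneity, the only nontrivial ingredient is the construction and the four properties of the extension operator $E$; everything else is bookkeeping.
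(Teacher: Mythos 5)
There is a genuine gap at the crucial step where you claim that $\mathcal{K}_R:=\overline{E(\mathcal{C}_R)}$ is compact "by (iv)". Property (iv) is correct only for \emph{compact} families $\mathcal{C}\subset\Conv(V,\R)$, where uniform bounds on a compact neighborhood of $\overline{B_1}$ control the subdifferentials on $\overline{B_1}$. Your set $\mathcal{C}_R$ is not compact: its defining constraints (sup bound, value at $y_0$, Lipschitz bound of the \emph{restriction} to $\overline{B_1}$) say nothing about the behaviour of $f$ just outside $\overline{B_1}$, and hence do not control $\partial f$ at boundary points of $\overline{B_1}$. Concretely, pick $x_1\in\partial B_1$ with outer unit normal $u$ and set $f_M(x):=\max\bigl(0,M\langle u,x-x_1\rangle\bigr)$. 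Then $f_M=0$ on $\overline{B_1}$, so $f_M\in\mathcal{C}_R$ for every $M$, but $Mu\in\partial f_M(x_1)$, so $Ef_M(x_1+u)\ge M$. Thus $E(\mathcal{C}_R)$ is not locally uniformly bounded, $\mathcal{K}_1$ is not compact, and your $p(\mu)=\sup_{g\in\mathcal{K}_1}|\mu(g)|$ is \emph{not} one of the seminorms defining the topology of $\VConv_k(V)$; with that, both the finiteness/continuity of $p$ on $\VConv_{k,A}(V)$ and the identification of the $p$-topology with the relative topology lose their justification. (A secondary point: you also assert without argument that $\VConv_k(V)$ is complete; this is true but needs a short proof, e.g. that a pointwise limit which is uniform on compacta is again a continuous valuation, using that $\Conv(V,\R)$ is metrizable.)

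The overall strategy — localize via an extension built from supporting affine functions, use $k$-homogeneity to dominate all seminorms by a single one, and get completeness from closedness — is sound and is in the spirit of the cited reference (note the paper itself does not prove this proposition; it quotes Corollary 6.10 of \cite{Knoerr:support_of_dually_epi-translation_invariant_valuations}, where the norm is defined by testing against functions controlled on a fixed neighborhood of $A$). The fix is to decouple the anchoring set from the constraint set: anchor the supporting functions at points of a compact set $\overline{B_0}$ with $A\subset B_0\subset\overline{B_0}\subset B_1$, and constrain $f$ only by $\sup_{\overline{B_1}}|f|\le R$ (the lower bound on $B_0$ then follows from convexity and the value at $y_0$, and the subgradients on $\overline{B_0}$ are bounded by $CR/\mathrm{dist}(\overline{B_0},\partial B_1)$ by the standard convexity estimate). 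With this modification $E(\mathcal{C}_R)$ is relatively compact and the rest of your bookkeeping goes through.
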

		For a definition of this norm, see \cite{Knoerr:support_of_dually_epi-translation_invariant_valuations} Proposition 6.8.\\
		
		Our characterization of smooth valuations on convex functions is based on the following relation between valuations on convex functions and valuations on convex bodies.	.
		\begin{theorem}[\cite{Knoerr:support_of_dually_epi-translation_invariant_valuations} Theorem 3.4]
			For $\mu\in \VConv_k(V)$ consider $T(\mu)\in\Val_k(V^*\times\R)$ defined by
			\begin{align*}
				T(\mu)[K]:=\mu\left(h_K(\cdot,-1)\right)\quad\text{for } K\in\mathcal{K}(V^*\times\R).
			\end{align*}
			Then $T:\VConv_k(V)\rightarrow\Val_k(V^*\times\R)$ is well defined, continuous and injective.
		\end{theorem}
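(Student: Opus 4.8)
The plan is to check, in turn, that $T(\mu)$ really lands in $\Val_k(V^*\times\R)$, that $T$ is linear and continuous, and that $T$ is injective, the last point being the only nontrivial one.

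\emph{Well-definedness.} For $K\in\mathcal K(V^*\times\R)$ the support function $h_K$ is finite on $(V^*\times\R)^*=V\times\R$, so its restriction $h_K(\cdot,-1)$ is a finite convex function on $V$, i.e.\ an element of $\Conv(V,\R)$, and $T(\mu)[K]$ is well defined. If $K,L,K\cup L\in\mathcal K(V^*\times\R)$, then convexity of $K\cup L$ forces $K\cap L\neq\emptyset$, $h_{K\cup L}=\max(h_K,h_L)$ and $h_{K\cap L}=\min(h_K,h_L)$; the last identity holds because, given a direction $w$ and maximizers $x\in K$, $y\in L$ of $\langle w,\cdot\rangle$, the point at which the segment $[x,y]\subset K\cup L$ crosses from $K$ to $L$ lies in $K\cap L$ and attains at least $\min(h_K(w),h_L(w))$. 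Restricting these identities to $\{t=-1\}$ and using that $\mu$ is a valuation on $\Conv(V,\R)$ yields the valuation property of $T(\mu)$. Translation invariance follows from $h_{K+(\eta,s)}(\cdot,-1)=h_K(\cdot,-1)+\langle\cdot,\eta\rangle-s$ for $(\eta,s)\in V^*\times\R$ together with dual epi-translation invariance of $\mu$; Hausdorff continuity follows because $K_j\to K$ implies $h_{K_j}\to h_K$ uniformly on compacta, hence $h_{K_j}(\cdot,-1)\to h_K(\cdot,-1)$ in $\Conv(V,\R)$, to which one applies continuity of $\mu$; and $k$-homogeneity follows from $h_{tK}=t\,h_K$ for $t\ge 0$. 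Thus $T(\mu)\in\Val_k(V^*\times\R)$, and $T$ is evidently linear.

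\emph{Continuity.} Fix a convex body $B_0\subset V^*\times\R$ with non-empty interior, with respect to which one measures the Banach norm on $\Val_k(V^*\times\R)$. The set $\mathcal F:=\{h_K(\cdot,-1):K\in\mathcal K(V^*\times\R),\ K\subset B_0\}$ is the image of the Hausdorff-compact family $\{K:K\subset B_0\}$ under the continuous map $K\mapsto h_K(\cdot,-1)$, hence is a compact subset of $\Conv(V,\R)$. Since $\|T(\mu)\|=\sup_{K\subset B_0}|\mu(h_K(\cdot,-1))|=\sup_{g\in\mathcal F}|\mu(g)|$, the right-hand side is precisely one of the seminorms defining the topology of uniform convergence on compact subsets on $\VConv_k(V)$, so $T$ is continuous.

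\emph{Injectivity.} Assume $T(\mu)=0$ and fix an arbitrary $f\in\Conv(V,\R)$; the goal is $\mu(f)=0$. Fix a norm on $V$; for $j\in\mathbb N$ put $C_j:=\mathrm{Lip}\bigl(f|_{\{|v|\le j+1\}}\bigr)$, $M_j:=\max_{\{|v|\le j\}}|f|$ and
\begin{align*}
 K_j:=\epi(f^*)\cap\bigl(\{\eta\in V^*:|\eta|\le C_j\}\times[-(C_jj+M_j),\,C_jj+M_j]\bigr),
\end{align*}
a compact convex (and, as we will see, non-empty) subset of $V^*\times\R$. I claim $h_{K_j}(\cdot,-1)=f$ on $\{|v|\le j\}$. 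The inequality $\le$ is clear from $K_j\subset\epi(f^*)$ and $h_{\epi(f^*)}(\cdot,-1)=f^{**}=f$. For $\ge$, given $|v|\le j$ pick $\eta_v\in\partial f(v)$: subgradients at points of a ball are bounded by a local Lipschitz constant, so $|\eta_v|\le C_j$, while Fenchel's equality gives $f^*(\eta_v)=\langle\eta_v,v\rangle-f(v)$, so $|f^*(\eta_v)|\le C_jj+M_j$; hence $(\eta_v,f^*(\eta_v))\in K_j$ and $h_{K_j}(v,-1)\ge\langle\eta_v,v\rangle-f^*(\eta_v)=f(v)$ (taking $v=0$ also shows $K_j\neq\emptyset$). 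Consequently $h_{K_j}(\cdot,-1)\to f$ in $\Conv(V,\R)$, and continuity of $\mu$ together with $T(\mu)=0$ yields $\mu(f)=\lim_j\mu(h_{K_j}(\cdot,-1))=\lim_j T(\mu)[K_j]=0$. As $f$ was arbitrary, $\mu=0$. (One could equally well use Proposition~\ref{proposition_support_convex_valuation}: for $j$ large enough that $\{|v|\le j\}$ is a neighbourhood of the compact set $\supp\mu$, already a single $K_j$ gives $\mu(f)=\mu(h_{K_j}(\cdot,-1))=0$.) The one genuine difficulty here is this last part — namely, realizing an arbitrary finite convex function on $V$, at least on an arbitrarily large ball, as $h_K(\cdot,-1)$ for a convex body $K\subset V^*\times\R$ — everything else being routine manipulation of support functions and Legendre transforms.
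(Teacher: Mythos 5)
Your proof is correct. Note that the paper itself does not prove this statement -- it is imported from \cite{Knoerr:support_of_dually_epi-translation_invariant_valuations} (Theorem 3.4) -- and your argument (the support-function identities $h_{K\cup L}=h_K\vee h_L$, $h_{K\cap L}=h_K\wedge h_L$ for the valuation property, dual epi-translation invariance for translation invariance, compactness of $\{h_K(\cdot,-1):K\subset B_0\}$ in $\Conv(V,\R)$ for continuity, and injectivity by realizing an arbitrary $f\in\Conv(V,\R)$ on arbitrarily large balls as $h_{K_j}(\cdot,-1)$ with $K_j$ a compact truncation of $\epi(f^*)$) is essentially the standard route taken in that reference.
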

		The image of this map was described in \cite{Knoerr:support_of_dually_epi-translation_invariant_valuations} in terms of the supports of these valuations. To simplify the notation, assume that $V$ carries a Euclidean structure, which induces a Euclidean structure on $V\times\R$. Consider the smooth map
		\begin{align*}
		P:V&\rightarrow S(V\times\R)\\
		x&\mapsto \frac{1}{\sqrt{1+|x|^2}}(x,-1),
		\end{align*}
		which is a diffeomorphism onto its image.
		\begin{theorem}[\cite{Knoerr:support_of_dually_epi-translation_invariant_valuations} Theorem 6.17]
			\label{theorem_image_VCONV->Val}
			The image of $T:\VConv_k(V)\rightarrow\Val_k(V^*\times\R)$ consists precisely of the valuations in $\Val_k(V^*\times\R)$ whose vertical support is contained in the negative half sphere $S(V\times\R)_-:=\{(x,t)\in S(V\times\R):t<0\}$. Moreover $T:\VConv_{k,A}(V)\rightarrow\Val_{k,P(A)}(V^*\times\R)$ is a topological isomorphism between Banach spaces for any compact subset $A\subset V$.
		\end{theorem}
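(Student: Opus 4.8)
The plan is to prove the localized statement --- that for every compact $A\subseteq V$ the map $T$ restricts to a topological isomorphism $\VConv_{k,A}(V)\to\Val_{k,P(A)}(V^*\times\R)$ --- and then to read off the description of the image. Since $T$ is continuous and injective by the theorem recalled above, and since $\VConv_{k,A}(V)$ and $\Val_{k,P(A)}(V^*\times\R)$ are Banach spaces by Proposition~\ref{proposition_Banach_structures_subspaces_VConv} and Corollary~\ref{corollary_Val_subspaces_compact_support_are_Banach}, it suffices to show that $T$ maps $\VConv_{k,A}(V)$ bijectively onto $\Val_{k,P(A)}(V^*\times\R)$; the open mapping theorem then promotes this to a topological isomorphism. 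The inclusion $T(\VConv_{k,A}(V))\subseteq\Val_{k,P(A)}(V^*\times\R)$ is easy: $T(\mu)[K]=\mu(h_K(\cdot,-1))$ depends on $K$ only through $h_K|_{\{t=-1\}}$, hence, by $1$-homogeneity of support functions, only through $h_K|_{S(V\times\R)_-}$. So if $h_K=h_L$ on a neighbourhood of $P(A)$, then $h_K(\cdot,-1)=h_L(\cdot,-1)$ on a neighbourhood of $A\supseteq\supp\mu$, whence $T(\mu)[K]=\mu(h_K(\cdot,-1))=\mu(h_L(\cdot,-1))=T(\mu)[L]$ by Proposition~\ref{proposition_support_convex_valuation}; Lemma~\ref{lemma_property_vertical_support}, applied in $V^*\times\R$, gives $\vsupp T(\mu)\subseteq P(A)$.

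For surjectivity, fix $\nu\in\Val_{k,P(A)}(V^*\times\R)$ and a compact neighbourhood $A'\subseteq V$ of $A$. For $f\in\Conv(V,\R)$ put $K_f:=\epi(f^*)\cap B_f$, where $B_f\subseteq V^*\times\R$ is a sufficiently large compact box (chosen so that it does not cut off the part of $\epi(f^*)$ lying over $\bigcup_{x\in A'}\partial f(x)$, a bounded subset of $\dom f^*$). Then $K_f\in\mathcal{K}(V^*\times\R)$, and using the Fenchel equality $f^*(p)=\langle p,x\rangle-f(x)$ for $p\in\partial f(x)$ one checks $h_{K_f}(y,-1)=f(y)$ for all $y\in\mathrm{int}(A')$; thus $K_f$ realizes $f$ on a neighbourhood of $A$. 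Set $\mu(f):=\nu(K_f)$. This is independent of the chosen realization: if $K,K'\in\mathcal{K}(V^*\times\R)$ satisfy $h_K(\cdot,-1)=h_{K'}(\cdot,-1)$ near $A$, then by $1$-homogeneity $h_K=h_{K'}$ on a neighbourhood of $P(A)\supseteq\vsupp\nu$, so $\nu(K)=\nu(K')$ by Lemma~\ref{lemma_property_vertical_support}.

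It remains to check $\mu\in\VConv_{k,A}(V)$ and $T(\mu)=\nu$. Using $tK_f$ and $K_f+(\lambda,c)$ as realizations of $tf$ and $f+\lambda+c$, the homogeneity and translation invariance of $\nu$ show $\mu$ is $k$-homogeneous and dually epi-translation invariant; continuity of $\mu$ follows from continuity of $\nu$, since $f\mapsto\epi(f^*)\cap B$ can be arranged to be continuous in the Hausdorff metric on compact subsets of $\Conv(V,\R)$. For the valuation property, let $f,h\in\Conv(V,\R)$ with $f\wedge h\in\Conv(V,\R)$ and put $C_1:=\epi(f^*)\cap B$, $C_2:=\epi(h^*)\cap B$ with $B$ a common, sufficiently large box. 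Then $C_1\cap C_2=\epi((f\wedge h)^*)\cap B$ realizes $f\wedge h$ near $A$, while $C_1\cup C_2$ realizes $f\vee h$ near $A$ and is convex --- here the hypothesis that $f\wedge h$ is convex is used, in the form that it forces $\epi(f^*)\cup\epi(h^*)=\epi(\min(f^*,h^*))$ to be convex. Applying the valuation identity of $\nu$ to the complementary pair $C_1,C_2$ yields $\mu(f)+\mu(h)=\nu(C_1)+\nu(C_2)=\nu(C_1\cup C_2)+\nu(C_1\cap C_2)=\mu(f\vee h)+\mu(f\wedge h)$. Finally $T(\mu)=\nu$: choosing $K_{h_K(\cdot,-1)}=K$ gives $T(\mu)[K]=\nu(K)$; and if $f=g$ near $A$, then $K_f$ realizes both $f$ and $g$ near $A$, so $\mu(f)=\nu(K_f)=\mu(g)$, whence $\supp\mu\subseteq A$ by Proposition~\ref{proposition_support_convex_valuation}.

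By the open mapping theorem this establishes the localized statement. The description of the image of $T$ on all of $\VConv_k(V)$ follows: for $\mu\in\VConv_k(V)$ the support $\supp\mu$ is a compact subset of $V$ (as recalled above), so $\vsupp T(\mu)\subseteq P(\supp\mu)\subseteq S(V\times\R)_-$ by the first paragraph; conversely, $\vsupp\nu$ is always a closed subset of the compact sphere $S(V\times\R)$, hence compact, so if $\vsupp\nu\subseteq S(V\times\R)_-$ then, as $P$ is a homeomorphism of $V$ onto $S(V\times\R)_-$, the set $A:=P^{-1}(\vsupp\nu)$ is compact and $\nu\in\Val_{k,P(A)}(V^*\times\R)=T(\VConv_{k,A}(V))\subseteq T(\VConv_k(V))$. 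The main obstacle is the surjectivity step, and within it two points: first, that an arbitrary finite convex function --- which in general is not globally a support function restricted to $\{t=-1\}$ --- can still be realized in this way on a neighbourhood of a prescribed compact set, which requires controlling the conjugate near that set; and second, the verification of the valuation property of $\mu$, which rests on the geometric fact that $f\wedge h$ is convex precisely when the epigraphs of $f^*$ and $h^*$ have convex union, so that $f$ and $h$ admit simultaneous realizations near $A$ by a complementary pair in $\mathcal{K}(V^*\times\R)$.
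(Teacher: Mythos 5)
This theorem is quoted from \cite{Knoerr:support_of_dually_epi-translation_invariant_valuations} (Theorem 6.17) and is not proved in the present paper, so there is no in-paper argument to compare against; your reconstruction is correct and proceeds along the same lines as the original source: realize a convex function near a prescribed compact set as $h_K(\cdot,-1)$ by truncating $\epi(f^*)$ with a large box, use the minimality characterizations of $\supp$ and $\vsupp$ (Proposition \ref{proposition_support_convex_valuation}, Lemma \ref{lemma_property_vertical_support}) for well-definedness and the support inclusions, and upgrade the resulting continuous bijection between the Banach spaces $\VConv_{k,A}(V)$ and $\Val_{k,P(A)}(V^*\times\R)$ to a topological isomorphism via the open mapping theorem. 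The two facts you assert without proof --- that convexity of $f\wedge h$ forces $\epi(f^*)\cup\epi(h^*)$ to be convex, so that $(f\vee h)^*=f^*\wedge h^*$ and the pair $C_1,C_2$ is complementary in $\mathcal{K}(V^*\times\R)$, and that the truncated conjugate epigraphs can be chosen (with a common box on a convergent sequence, via locally uniform Lipschitz bounds) to depend continuously on $f$ in the Hausdorff metric --- are both true and standard, but they carry the real weight of the surjectivity step, so a complete write-up would have to include their proofs.
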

		
\section{Construction of valuations using the differential cycle}
	\label{section:Construction_valuations_differential_cycle}
	\subsection{Properties of the differential cycle}
	\label{section_properties_differential_cycle}
		In this section we summarize the basic facts concerning Monge-Ampère functions established by Fu in \cite{Fu:Monge-Ampere_1}. Let $V$ be an oriented vector space with volume form $\vol\in \Lambda^n V^*$ and let $\omega_s$ denote the natural symplectic form on $T^*V$. For simplicity, let us equip $V$ with a scalar product inducing the volume form $\vol$.
		\begin{theorem}[\cite{Fu:Monge-Ampere_1} Theorem 2.0]
			\label{theorem_characterization_differential_cycle}
			Let $f:V\rightarrow\mathbb{R}$ be a locally Lipschitzian function. There exists at most one integral current $S\in I_n(T^*V)$ such that
			\begin{enumerate}
				\item $S$ is closed, i.e. $\partial S=0$,
				\item $S$ is Lagrangian, i.e. $S\llcorner \omega_s=0$,
				\item $S$ is locally vertically bounded, i.e. $\supp S\cap \pi^{-1}(A)$ is compact for all $A\subset V$ compact,
				\item $S(\phi(x,y)\pi^*\vol)=\int_{V}\phi(x,df(x))d\vol(x)$ for all $\phi\in C^\infty_c(T^*V)$.
			\end{enumerate}
			Note that the right hand side of the last equation is well defined due to Rademacher's theorem.
		\end{theorem}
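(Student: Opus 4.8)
The theorem only asserts uniqueness, so the plan is to show that if $S_1,S_2\in I_n(T^*V)$ both satisfy conditions (1)--(4), then $S_1=S_2$. First I would pass to the difference current $T:=S_1-S_2$. Differences of integral currents are integral, and each of the four conditions is linear in the current, so $T$ is again a closed, Lagrangian, locally vertically bounded integral $n$-current. Moreover, since $S_1$ and $S_2$ satisfy condition (4) with the \emph{same} function $f$, the right-hand sides of that identity cancel, and we obtain $T(\phi\,\pi^*\vol)=0$ for every $\phi\in C_c^\infty(T^*V)$. The task is then to deduce $T=0$. It helps to keep the smooth case in mind for orientation: when $f\in C^2(V)$ the graph of $df$, with multiplicity one and the orientation induced by $\vol$, satisfies (1)--(4), and one checks directly that (1)--(4) admit no other solution; the general case is a rigidity statement of the same flavour.

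The next step is to exploit $T(\phi\,\pi^*\vol)=0$ by slicing $T$ over the base along $\pi\colon T^*V\to V$. For $\vol$-almost every $x\in V$ the slice $\langle T,\pi,x\rangle$ is an integral $0$-current supported in the fibre $\pi^{-1}(x)$, which by local vertical boundedness is in fact compactly supported, hence a finite integer combination of points. The coarea formula for currents rewrites $T(\phi\,\pi^*\vol)$ as $\int_V\langle T,\pi,x\rangle[\phi(x,\cdot)]\,d\vol(x)$, so testing against $\phi(x,y)=\chi(x)\psi(y)$ with $\chi\in C_c^\infty(V)$ and letting $\psi$ range over a countable family of smooth compactly supported functions on $V^*$ dense in $C_c(V^*)$ forces $\langle T,\pi,x\rangle=0$ for $\vol$-almost every $x$. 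Equivalently, the $\pi^*\vol$-component of the approximate tangent $n$-vector $\vec T$ vanishes $\|T\|$-almost everywhere. Combined with the Lagrangian condition, this is restrictive: a short linear-algebra computation with $\omega_s$ shows that a Lagrangian $n$-plane on which $d\pi$ drops rank must contain a nonzero vector tangent to the fibre and must project onto a subspace of $V$ contained in a hyperplane. Thus $\|T\|$-almost every tangent plane of $T$ is ``purely vertical'', carrying no horizontal directions.

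The final step --- promoting this tangential information to $T=0$ --- is where I expect the main difficulty. A natural route is to construct a well-behaved primitive using the fibrewise contraction $h\colon[0,1]\times T^*V\to T^*V$, $h(t,(x,y))=(x,ty)$: the homotopy formula for the cycle $T$ gives $T-h_{0*}T=\partial R$, where $R$ is the pushforward under $h$ of the product of the interval current with $T$. Here $h_{0*}T$ is supported on the zero section and has mass bounded by $\int|\langle\vec T,\pi^*\vol\rangle|\,d\|T\|=0$, hence vanishes, so $T=\partial R$; and since $h_t$ never increases $|y|$ and commutes with $\pi$, the primitive $R$ is again an integral current that is locally vertically bounded. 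What remains is to show that such a vertically bounded integral primitive of a closed, Lagrangian, tangentially vertical current has vanishing boundary. I would attack this by slicing $R$ over the base into compactly supported integral $1$-currents which, because $\partial R=T$ has vanishing base-slices, are themselves cycles, and then running a constancy-type argument fibrewise, using the Lagrangian constraint on the tangent planes of $T$ once more. This is precisely the point where closedness, the Lagrangian property and local vertical boundedness must be used in conjunction, and I would expect the careful bookkeeping with the structure of integral currents here to constitute the bulk of the argument; for the full details I would follow Fu \cite{Fu:Monge-Ampere_1}.
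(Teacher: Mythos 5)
This theorem is quoted from Fu \cite{Fu:Monge-Ampere_1}; the paper gives no proof of it, so your attempt has to stand on its own. Your opening reductions are sound: the difference $T=S_1-S_2$ is indeed a closed, Lagrangian, locally vertically bounded integral cycle with $T(\phi\,\pi^*\vol)=0$ for all $\phi\in C^\infty_c(T^*V)$, this is equivalent to $\langle\vec T,\pi^*\vol\rangle=0$ holding $\|T\|$-almost everywhere, and the fibre contraction $h_t(x,y)=(x,ty)$ together with the homotopy formula does give $T=\partial R$ with $R$ integral and locally vertically bounded, since $h_{0*}T=0$ for the reason you give.

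There are, however, two problems. First, your intermediate claim that $\|T\|$-almost every tangent plane is ``purely vertical, carrying no horizontal directions'' is false for $n\ge 2$: the vanishing of $\pi^*\vol$ on a Lagrangian $n$-plane only forces the plane to meet the fibre direction nontrivially (equivalently, its projection to $V$ to be degenerate), not to be contained in the fibre. The plane spanned by $\partial_{x_2}$ and $\partial_{y_1}$ in $T^*\R^2$ is Lagrangian, annihilates $dx_1\wedge dx_2$, and contains a horizontal vector; planes of exactly this type occur in differential cycles of non-smooth convex functions (e.g. $f(x)=|x_1|$), so they cannot be excluded for the difference current. Second, and decisively, the entire content of the theorem is the rigidity statement that a closed, Lagrangian, locally vertically bounded integral $n$-cycle with vanishing horizontal part must be zero. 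Your final paragraph reduces the problem to this statement, sketches a ``constancy-type argument fibrewise'' whose mechanism is not specified (knowing that almost every slice $\langle R,\pi,x\rangle$ is a compactly supported $1$-cycle in its fibre says nothing by itself about $R$ or $\partial R$ vanishing), and then explicitly defers to Fu for the details. Since that rigidity is precisely what the citation to \cite{Fu:Monge-Ampere_1} is invoked for, the proposal does not yet contain a proof: the Lagrangian condition has to enter in an essential, quantitative way (for instance via the fact that a Lagrangian plane containing a vertical vector $(0,\eta)$ projects into the hyperplane $\ker\eta$), and that step is missing.
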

		If such a current exists, the function $f$ is called Monge-Amp\`ere. The corresponding current is denoted by $D(f)$ (it is denoted by $[df]$ in \cite{Fu:Monge-Ampere_1}) and is called the \emph{differential cycle of $f$}. Moreover, we have the following description of the support of $D(f)$: Let $\partial^*f:V\rightarrow \mathcal{K}(V^*)$ denote the unique upper semi-continuous multifunction with values in $\mathcal{K}(V^*)$ such that $df(x)\in\partial^*f(x)$ whenever $f$ is differentiable at $x\in V$ (also called the generalized differential by Clarke \cite{Clarke:optimization_non_smooth_analysis}).
		\begin{theorem}[\cite{Fu:Monge-Ampere_1} Theorem 2.2.]
			\label{theorem_FU_support_Differential_cycle}
			If $f:V\rightarrow\R$ is Monge-Amp\`ere, then 
			\begin{align*}
				\supp D(f)\subset \text{graph }\partial^*f:=\left\{(x,y)\in T^*V: y\in\partial^*f(x)\right\}.
			\end{align*}
			In particular, given an open set $U\subset V$,
			\begin{align*}
				\supp D(f)\cap\pi^{-1}(U)\subset U\times B_{\mathrm{lip}(f|_U)}(0),
			\end{align*}
			where $\mathrm{lip}(f|_U)$ denotes the Lipschitz-constant of $f|_U$.
		\end{theorem}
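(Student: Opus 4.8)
The statement is local in nature, so the plan is to prove the equivalent assertion that $D(f)$ vanishes on a neighbourhood of any point $(x_0,y_0)\in T^*V$ with $y_0\notin\partial^*f(x_0)$. Since $\partial^*f$ is upper semi-continuous with compact convex values, I would first choose a bounded open neighbourhood $U\subset V$ of $x_0$ and a bounded open convex neighbourhood $W\subset V^*$ of $y_0$ with $\partial^*f(x)\cap\overline W=\emptyset$ for all $x\in\overline U$ (by compactness of $\overline U$ one even obtains a uniform strict separation of $W$ from the subdifferentials over $\overline U$). Because $\text{graph }\partial^*f$ is closed, again by upper semi-continuity, it then suffices to show $D(f)\llcorner(U\times W)=0$.

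The second step is to see that $D(f)$ kills all \emph{horizontal} test forms supported in $U\times W$. By Rademacher's theorem $f$ is differentiable almost everywhere, and at every point of differentiability $df(x)\in\partial^*f(x)$, so $df(x)\notin\overline W$ for almost every $x\in U$. Hence, for $\phi\in C^\infty_c(U\times W)$, condition (4) of Theorem~\ref{theorem_characterization_differential_cycle} gives
\begin{align*}
D(f)[\phi\,\pi^*\vol]=\int_U\phi(x,df(x))\,d\vol(x)=0.
\end{align*}
In the language of slicing this says that the slice $\langle D(f),\pi,x\rangle$ is the unit point mass at $df(x)\notin\overline W$ for a.e.\ $x$, so every $\pi$-slice of the current $R:=D(f)\llcorner(U\times W)$ vanishes.

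The third and hardest step is to upgrade this to $R=0$. The current $R$ is still Lagrangian and locally vertically bounded, and $\partial R$ is carried by $\partial(U\times W)$. The mechanism I would use is that a current with these properties whose $\pi$-slices all vanish must itself vanish: at almost every point of $\supp R$ the approximate tangent $n$-plane is a Lagrangian subspace of $T(T^*V)\cong V\oplus V^*$, and a bookkeeping of the rank of its projection to $V$ shows that the part of $R$ on which this rank is maximal is exactly the part seen by the (vanishing) slices, hence null, while the remaining part is concentrated on individual fibres $\{x\}\times V^*$, where it forms a closed integral $n$-current with compact support inside a copy of $\R^n$ and therefore vanishes by the constancy theorem; the intermediate ranks are handled by iterated slicing in the fibre directions. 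Making this structural dichotomy precise is really the local counterpart of the uniqueness statement in Theorem~\ref{theorem_characterization_differential_cycle}, and I expect it to be the main obstacle in the whole proof. Granting it, $R=0$, so $(x_0,y_0)\notin\supp D(f)$, establishing $\supp D(f)\subset\text{graph }\partial^*f$.

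Finally, for the ``in particular'' part: if $U\subset V$ is open and $x\in U$, then every element of $\partial^*f(x)$ is a convex combination of limits of gradients $df(x_i)$ along points of differentiability $x_i\to x$ in $U$, and $|df(x_i)|\le\mathrm{lip}(f|_U)$ for all such $x_i$; hence $\partial^*f(x)$ lies in the closed ball of radius $\mathrm{lip}(f|_U)$, and the inclusion just proved yields $\supp D(f)\cap\pi^{-1}(U)\subset U\times B_{\mathrm{lip}(f|_U)}(0)$ (reading the ball as closed, which is immaterial later). I would also remark that when $f$ is convex---the only case needed in the sequel---one can bypass the third step entirely: $\partial f$ is maximal monotone, its graph $G$ is an $n$-dimensional Lipschitz submanifold via the Minty parametrisation $(x,y)\mapsto x+y$, and the associated integral current $[G]$ is readily checked to satisfy properties (1)--(4) of Theorem~\ref{theorem_characterization_differential_cycle}; by uniqueness $D(f)=[G]$, so $\supp D(f)=G=\text{graph }\partial^*f$ directly.
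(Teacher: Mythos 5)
First, a point of reference: the paper does not prove this statement at all --- it is quoted from Fu (\cite{Fu:Monge-Ampere_1}, Theorem 2.2) --- so your attempt can only be measured against Fu's argument, not against anything in this text. Measured that way, your first two steps are fine, but your third step has a genuine gap, and the mechanism you propose there is false as stated. You claim that an integral $n$-current $R$ in $U\times W$ which is Lagrangian, vertically bounded, has boundary carried by $\partial(U\times W)$ and has vanishing $\pi$-slices must vanish. It need not: differential cycles of nonsmooth functions contain genuinely vertical pieces, and these satisfy all of your hypotheses. Concretely, for the convex function $f(x)=|x_1|+\tfrac{1}{2}|x'|^2$ on $\R^n$ (with $x=(x_1,x')$), the current $D(f)$ contains the $n$-dimensional Lagrangian piece $\{(0,x',y_1,x'):y_1\in[-1,1]\}$; restricting $D(f)$ to $U\times W$, where $U$ is a small ball around the origin and $W=(\tfrac{1}{4},\tfrac{3}{4})\times B$ with $B$ a small ball around the origin in the $y'$-variables, gives a nonzero $R$ that is Lagrangian, vertically bounded, has $\partial R$ carried by $\partial(U\times W)$ (since $D(f)$ is closed), and whose slices $\langle R,\pi,x\rangle$ vanish for a.e.\ $x$, because $df(x)=(\pm 1,x')\notin W$ whenever $x_1\neq 0$. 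This is of course not a counterexample to the theorem (here $W$ meets $\partial^*f(0)$), but it shows that no purely structural ``vanishing slices plus Lagrangian plus boundary on the boundary'' argument can give $R=0$: the separation of $\overline{W}$ from $\partial^*f(x)$ for $x\in\overline{U}$ must be used a second time inside step three, not only to kill horizontal test forms. The natural way it enters (and, in essence, how Fu uses the definition of the Clarke differential) is through a separating direction $v$ with $\max_{y\in\partial^*f(x)}\langle y,v\rangle<c<\langle y_0,v\rangle$ for all $x\in\overline{U}$, i.e.\ a one-sided directional Lipschitz bound on $f$ over $U$; it is this quantitative information about $f$ itself, combined with the closedness and Lagrangian property of the whole cycle over $U$ (not merely of its restriction to $U\times W$), that excludes mass at heights $y$ with $\langle y,v\rangle\geq c$. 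Your iterated-slicing and constancy-theorem sketch does not supply this, and the fibre-concentrated part it isolates is in general neither closed nor actually concentrated on single fibres.

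The rest is unproblematic: the localization in step one, the computation in step two, and the deduction of the ``in particular'' statement from the inclusion are correct. Your closing remark is also a reasonable alternative for what this paper actually needs, since only convex $f$ occurs: via the Minty parametrisation $z\mapsto\bigl((\mathrm{id}+\partial f)^{-1}z,\,z-(\mathrm{id}+\partial f)^{-1}z\bigr)$ the graph of $\partial f$ is an $n$-dimensional Lipschitz submanifold, and the associated current satisfies conditions (1)--(4) of Theorem \ref{theorem_characterization_differential_cycle}, giving $D(f)=[\mathrm{graph}\,\partial f]$ and the support statement at once. But condition (4) is not ``readily checked'': it requires the area formula for this Lipschitz parametrisation together with Alexandrov's theorem, so even this shortcut needs more work than your remark suggests.
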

		Let us summarize some additional properties.
		\begin{proposition}[\cite{Fu:Monge-Ampere_1} Proposition 2.4]
			\label{proposition_Fu_sum_MA_andC11}
			Let $f$ be a Monge-Amp\`ere function and $\phi\in C^{1,1}(V)$. Then $f+\phi$ is Monge-Amp\`ere and 
			\begin{align*}
			F(f+\phi)=G_{\phi*}D(f),
			\end{align*}
			where $G_\phi:T^*V\rightarrow T^*V$ is given by $(x,y)\mapsto (x,y+d\phi(x))$.
		\end{proposition}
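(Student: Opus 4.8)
The plan is to show that $G_{\phi*}D(f)$ satisfies the four characterizing properties of Theorem~\ref{theorem_characterization_differential_cycle}, so that its uniqueness part yields both that $f+\phi$ is Monge-Amp\`ere and that $D(f+\phi)=G_{\phi*}D(f)$. First I would record the elementary features of $G_\phi$. Since $\phi\in C^{1,1}$, the map $d\phi$ is locally Lipschitz, so $G_\phi(x,y)=(x,y+d\phi(x))$ is a bi-Lipschitz homeomorphism of $T^*V$ with inverse $G_{-\phi}$; it is fibre-preserving, $\pi\circ G_\phi=\pi$, whence $G_\phi^*\pi^*\vol=\pi^*\vol$; and because $\supp D(f)\cap\pi^{-1}(A)$ is compact for every compact $A\subset V$ while $G_\phi(\pi^{-1}(A))=\pi^{-1}(A)$, the map $G_\phi$ is proper on $\supp D(f)$. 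Hence $S:=G_{\phi*}D(f)\in I_n(T^*V)$ is well defined, and $f+\phi$ is locally Lipschitz so the statement is meaningful.

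For $\phi\in C^\infty$ all four properties are immediate. Property~(1): $\partial S=G_{\phi*}\partial D(f)=0$. Property~(3): by $\pi\circ G_\phi=\pi$ one has $\supp S\cap\pi^{-1}(A)\subset G_\phi\bigl(\supp D(f)\cap\pi^{-1}(A)\bigr)$, which is compact. Property~(4): $S(\psi\,\pi^*\vol)=D(f)\bigl((\psi\circ G_\phi)\,\pi^*\vol\bigr)=\int_V\psi(x,df(x)+d\phi(x))\,d\vol(x)=\int_V\psi(x,d(f+\phi)(x))\,d\vol(x)$, using that $f$ is differentiable almost everywhere. Property~(2) is the one using the symplectic structure: a short computation with the Liouville form $\lambda$ gives $G_\phi^*\lambda=\lambda+\pi^*d\phi$, so $G_\phi^*\omega_s=\omega_s$ since $\omega_s=\pm d\lambda$ and $d(d\phi)=0$; therefore $S\llcorner\omega_s=G_{\phi*}\bigl(D(f)\llcorner G_\phi^*\omega_s\bigr)=G_{\phi*}\bigl(D(f)\llcorner\omega_s\bigr)=0$. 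Uniqueness in Theorem~\ref{theorem_characterization_differential_cycle} then settles the smooth case.

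For general $\phi\in C^{1,1}$ I would pass to the limit from the smooth case. Mollifying, $\phi_\varepsilon:=\phi*\rho_\varepsilon$ satisfies $\phi_\varepsilon\to\phi$ and $d\phi_\varepsilon\to d\phi$ locally uniformly with $\sup_\varepsilon\|D^2\phi_\varepsilon\|_{L^\infty(A)}<\infty$ on each compact $A$; hence the maps $G_{\phi_\varepsilon}$ converge to $G_\phi$ locally uniformly and have locally uniformly bounded Lipschitz constants. By the smooth case, $D(f+\phi_\varepsilon)=G_{\phi_\varepsilon*}D(f)$; by Theorem~\ref{theorem_FU_support_Differential_cycle} and the uniform bound on $d\phi_\varepsilon$, these currents have supports in a fixed vertically bounded region over each compact set and, by the Lipschitz bound, locally uniformly bounded mass (with vanishing boundary). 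Since $T^*V$ is a vector space, the affine-homotopy estimate for Lipschitz pushforwards gives $G_{\phi_\varepsilon*}D(f)\to G_{\phi*}D(f)=:S$ in the local flat topology, which together with the mass bounds upgrades to weak convergence. Then all four properties pass to $S$: $\partial S=0$; the support bound gives local vertical boundedness; $S(\psi\,\pi^*\vol)=\lim_\varepsilon\int_V\psi(x,df(x)+d\phi_\varepsilon(x))\,d\vol(x)=\int_V\psi(x,d(f+\phi)(x))\,d\vol(x)$ by dominated convergence; and $(S\llcorner\omega_s)(\eta)=S(\omega_s\wedge\eta)=\lim_\varepsilon\bigl(D(f+\phi_\varepsilon)\llcorner\omega_s\bigr)(\eta)=0$ for every test form $\eta$. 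Uniqueness again gives that $f+\phi$ is Monge-Amp\`ere with $D(f+\phi)=G_{\phi*}D(f)$.

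The main obstacle is Property~(2) when $\phi\notin C^2$. It is tempting to argue that $G_\phi^*\omega_s=\omega_s$ holds $\mathcal{L}^{2n}$-almost everywhere --- the a.e.\ Hessian of a $C^{1,1}$ function being symmetric, as the mollification shows --- and to conclude $S\llcorner\omega_s=0$ directly. This fails: the carrying measure of a differential cycle is in general singular, charging the ``vertical'' parts of $D(f)$ that sit over the creases of $f$, i.e.\ over $\mathcal{L}^n$-null subsets of $V$. Thus $D(f)$ can detect the null set on which $\phi$ is not twice differentiable, and the almost-everywhere identity cannot simply be contracted against $D(f)$. This is precisely why the mollification detour is needed, and the care there lies in the uniform mass and Lipschitz bounds required to pass to a flat (hence weak) limit and in verifying that Lagrangianity is stable under that limit.
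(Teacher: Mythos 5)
The paper does not prove this statement; it is quoted from Fu's work (\cite{Fu:Monge-Ampere_1}, Proposition 2.4), so there is no internal proof to compare against. Your argument is a correct reconstruction along the natural lines: verify the four characterizing properties of Theorem \ref{theorem_characterization_differential_cycle} for $G_{\phi*}D(f)$ (well defined since $G_\phi$ is fibre-preserving and hence proper on the vertically bounded support of $D(f)$), so that the uniqueness statement simultaneously yields that $f+\phi$ is Monge-Amp\`ere and identifies $D(f+\phi)=G_{\phi*}D(f)$; the smooth case is a direct computation ($G_\phi^*\omega_s=\omega_s$ because $G_\phi^*\alpha=\alpha+\pi^*d\phi$), and the passage to $\phi\in C^{1,1}$ by mollification, using the affine homotopy estimate together with $\partial D(f)=0$ to get local flat convergence $G_{\phi_\varepsilon*}D(f)\to G_{\phi*}D(f)$, is sound, since flat convergence lets all four properties pass to the limit. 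Two minor remarks: local flat convergence already implies weak convergence against compactly supported smooth forms, so the extra appeal to uniform mass bounds is not needed at that point (they only make the homotopy estimate quantitative); and when localizing the homotopy formula over $\pi^{-1}(U_R)$ you should say explicitly that the affine homotopy is fibre-preserving, so the cut-off does not interact with test forms supported over smaller balls --- this is the only step a careful reader would ask you to spell out. Your closing observation is also apt and matches the reason a regularization is genuinely needed: $D(f)$ is carried by an $n$-rectifiable set which can sit over $\mathcal{L}^n$-null subsets of $V$ (the creases of $f$), so the almost-everywhere identity $G_\phi^*\omega_s=\omega_s$ cannot simply be contracted against $D(f)$ to obtain the Lagrangian property.
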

		\begin{proposition}
			\label{proposition_Fu_differential_cycle_and_diffeomorphisms}
			Let $\phi:V\rightarrow V$ be a diffeomorphism of class $C^{1,1}$. Then $f\circ\phi$ is Monge-Amp\`ere and
			\begin{align*}
				D(f\circ\phi)=\left(\phi^\#\right)_*D(f),	
			\end{align*}
			if $\phi$ is orientation preserving and
			\begin{align*}
				D(f\circ\phi)=-\left(\phi^\#\right)_*D(f),
			\end{align*}
			if $\phi$ is orientation reversing. Here $\phi^\#:T^*V\rightarrow T^*V$ is given by $(x,y)\mapsto (\phi^{-1}(x),\phi^*y)$.
		\end{proposition}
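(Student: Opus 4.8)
The plan is to apply the uniqueness part of Theorem~\ref{theorem_characterization_differential_cycle}: I will verify that the current
\[
S:=\varepsilon\,(\phi^\#)_*D(f),\qquad \varepsilon:=\sign\det d\phi\in\{+1,-1\}
\]
(the sign being well defined and locally constant, hence constant since $V$ is connected) satisfies the four characterizing properties of that theorem with respect to the function $f\circ\phi$. Some preliminaries: since $\phi\in C^{1,1}$ its derivative $d\phi$ is locally Lipschitz, and by the inverse function theorem $\phi^{-1}$ is again $C^{1,1}$; consequently both $\phi^\#$ and its inverse (which is the canonical lift of $\phi$) are locally Lipschitz, so $\phi^\#\colon T^*V\to T^*V$ is a bi-Lipschitz homeomorphism and $(\phi^\#)_*D(f)$ is a well-defined integral current. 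Moreover $f\circ\phi$ is locally Lipschitz as a composition of locally Lipschitz maps, so Theorem~\ref{theorem_characterization_differential_cycle} applies to it; once the four properties of $S$ are checked, we conclude that $f\circ\phi$ is Monge-Amp\`ere with $D(f\circ\phi)=S$, which is the asserted formula after unpacking $\varepsilon$ into the two orientation cases.

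\noindent Properties (1)--(3) are formal. \emph{Closedness:} $\partial S=\varepsilon(\phi^\#)_*\partial D(f)=0$, since $\partial$ commutes with proper Lipschitz pushforward. \emph{Lagrangian:} $\phi^\#$ is the canonical lift of $\phi^{-1}$ to the cotangent bundle, hence preserves the tautological $1$-form $\lambda$ at every point where it is differentiable; since $\omega_s=d\lambda$ and $d$ is natural under Lipschitz maps in the distributional sense, $(\phi^\#)^*\omega_s=\omega_s$ almost everywhere, and therefore
\[
S\llcorner\omega_s=\varepsilon\,(\phi^\#)_*\bigl(D(f)\llcorner(\phi^\#)^*\omega_s\bigr)=\varepsilon\,(\phi^\#)_*\bigl(D(f)\llcorner\omega_s\bigr)=0 .
\]
\emph{Local vertical boundedness:} from the formula for $\phi^\#$ one reads off $\pi\circ\phi^\#=\phi^{-1}\circ\pi$, so for $A\subset V$ compact $\supp S\cap\pi^{-1}(A)=\phi^\#\bigl(\supp D(f)\cap\pi^{-1}(\phi(A))\bigr)$, which is compact because $\phi(A)$ is compact and $D(f)$ is locally vertically bounded.

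\noindent The substantive point is property (4). Let $\psi\in C^\infty_c(T^*V)$. Using $\pi\circ\phi^\#=\phi^{-1}\circ\pi$ we get $(\phi^\#)^*\pi^*\vol=\pi^*\bigl((\phi^{-1})^*\vol\bigr)=(J\circ\pi)\,\pi^*\vol$ with $J:=\det d\phi^{-1}$, hence $(\phi^\#)^*(\psi\,\pi^*\vol)=\bigl((\psi\circ\phi^\#)\cdot(J\circ\pi)\bigr)\,\pi^*\vol$. The coefficient here is only Lipschitz, so one first notes that property (4) of $D(f)$ extends from smooth to continuous compactly supported test functions: by property (3) and integrality, $D(f)$ has finite mass over $\pi^{-1}(K)$ for every compact $K$, so both sides of Theorem~\ref{theorem_characterization_differential_cycle}(4) depend continuously on the test function for uniform convergence on a fixed compact set, and one approximates. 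Applying the extended identity and then substituting $x=\phi(u)$ — so that $d\vol(x)=\lvert\det d\phi_u\rvert\,d\vol(u)$ and $J(\phi(u))=(\det d\phi_u)^{-1}$, whence $J(x)\,d\vol(x)=\varepsilon\,d\vol(u)$ — together with the almost everywhere chain rule $(d\phi_u)^*\bigl(df(\phi(u))\bigr)=d(f\circ\phi)(u)$, yields
\begin{align*}
S[\psi\,\pi^*\vol]&=\varepsilon\,D(f)\bigl[(\phi^\#)^*(\psi\,\pi^*\vol)\bigr]
=\varepsilon\int_V\psi\bigl(\phi^{-1}(x),(d\phi_{\phi^{-1}(x)})^*df(x)\bigr)\,J(x)\,d\vol(x)\\
&=\varepsilon^2\int_V\psi\bigl(u,d(f\circ\phi)(u)\bigr)\,d\vol(u)=\int_V\psi\bigl(u,d(f\circ\phi)(u)\bigr)\,d\vol(u),
\end{align*}
which is exactly property (4) for $f\circ\phi$.

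\noindent By the uniqueness part of Theorem~\ref{theorem_characterization_differential_cycle}, $f\circ\phi$ is Monge-Amp\`ere and $D(f\circ\phi)=S=\varepsilon(\phi^\#)_*D(f)$, i.e.\ $D(f\circ\phi)=(\phi^\#)_*D(f)$ if $\phi$ preserves orientation and $D(f\circ\phi)=-(\phi^\#)_*D(f)$ if $\phi$ reverses it. I expect the main obstacle to be the bookkeeping forced by the limited $C^{1,1}$-regularity of $\phi$: the symplectomorphism property of $\phi^\#$ must be justified via Rademacher's theorem and the naturality of $d$ under Lipschitz maps rather than a naive pointwise computation, and since $(\phi^\#)^*(\psi\,\pi^*\vol)$ has only Lipschitz coefficients, property~(4) of $D(f)$ has to be upgraded to continuous test functions before it can be invoked; the remaining change-of-variables computation, with its orientation sign, is routine and parallels the argument behind Proposition~\ref{proposition_Fu_sum_MA_andC11}.
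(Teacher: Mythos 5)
Your proposal is correct and is essentially the argument the paper has in mind: the paper simply cites Fu's Proposition 2.5 for the orientation-preserving case and remarks that the orientation-reversing case follows by the same argument with an extra sign in property (4), and your verification of the four characterizing properties of Theorem \ref{theorem_characterization_differential_cycle} for $\varepsilon(\phi^\#)_*D(f)$, followed by uniqueness, is exactly that argument written out (including the correct handling of the $C^{1,1}$ regularity and the sign $\varepsilon=\sign\det d\phi$).
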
 
		For orientation preserving diffeomorphisms this was shown in \cite{Fu:Monge-Ampere_1} Proposition 2.5. The second case follows with the same argument, taking into account that the last property in Theorem \ref{theorem_characterization_differential_cycle} requires an additional sign.\\
		Also note that \cite{Fu:Monge-Ampere_1} Remark 2.1 shows that for any $c\in\R\setminus\{0\}$ and any Monge-Amp\`ere function $f$ the function $cf$ is Monge-Amp\`ere with
		\begin{align}
			\label{equation_multiples_MA_functions}
			D(cf)=C_{*}D(f),
		\end{align}
		where $C:T^*V\rightarrow T^*V$ is given by $(x,y)\mapsto (x,cy)$.
		
		The differential cycle satisfies the following valuation property:
		\begin{proposition}[\cite{Fu:Monge-Ampere_1} Proposition 2.9]
			\label{proposition_Fu_valuation_property_Differential_cycle}
				Let $f,g:V\rightarrow\R$ be locally Lipschitzian. If any three of $f$, $g$, $f\vee g$ and $f\wedge g$ are Monge-Amp\`ere, then so is the fourth, and
				\begin{align*}
					D(f)+D(g)=D(f\wedge g)+D(f\vee g).
				\end{align*}
		\end{proposition}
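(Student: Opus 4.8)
The plan is to invoke the uniqueness clause of Theorem~\ref{theorem_characterization_differential_cycle}. By symmetry it suffices to treat the case in which $f$, $g$ and $f\vee g$ are Monge-Amp\`ere and to deduce that $f\wedge g$ is; the remaining three cases are handled by the same argument applied to the signed combination $D(f\vee g)+D(f\wedge g)-D(f)$, etc., using only that $f\vee g$ and $f\wedge g$ are automatically locally Lipschitzian as pointwise maximum/minimum of two locally Lipschitzian functions. So I would set
\begin{align*}
	S:=D(f)+D(g)-D(f\vee g)\in I_n(T^*V)
\end{align*}
and verify that $S$ satisfies the four characterizing properties of $D(f\wedge g)$ listed in Theorem~\ref{theorem_characterization_differential_cycle}.

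Properties (1)--(3) are immediate from linearity of the current operations: $\partial S=0$ since each summand is closed, $S\llcorner\omega_s=0$ since each summand is Lagrangian, and $\supp S$ is contained in the union of the supports of $D(f)$, $D(g)$ and $D(f\vee g)$, each of which meets $\pi^{-1}(A)$ in a compact set for every compact $A\subset V$; hence $S$ is locally vertically bounded.

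The substantive point is property (4): for $\phi\in C^\infty_c(T^*V)$ one must show $S(\phi\,\pi^*\vol)=\int_V\phi(x,d(f\wedge g)(x))\,d\vol(x)$, where $d(f\wedge g)$ exists almost everywhere by Rademacher's theorem (this does not require $f\wedge g$ to be Monge-Amp\`ere, only locally Lipschitzian). Expanding the left-hand side by property (4) for $f$, $g$ and $f\vee g$, the claim reduces to showing that
\begin{align*}
	\int_V\Big(\phi(x,df(x))+\phi(x,dg(x))-\phi(x,d(f\vee g)(x))-\phi(x,d(f\wedge g)(x))\Big)\,d\vol(x)=0,
\end{align*}
and I would prove that the integrand vanishes almost everywhere by partitioning $V$ into the open sets $\{f<g\}$, $\{f>g\}$ and the closed set $E:=\{f=g\}$. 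On $\{f<g\}$ one has $f\wedge g=f$ and $f\vee g=g$ on a neighbourhood of each point, so at a.e.\ point the four differentials cancel in pairs; likewise on $\{f>g\}$. On $E$, at every point where all of $f,g,f\wedge g,f\vee g$ are differentiable --- a.e.\ point of $E$ --- the nonnegative function $f-(f\wedge g)$ attains its minimum value $0$, forcing $d(f\wedge g)=df$, and similarly $d(f\wedge g)=dg$ and, from $(f\vee g)-f\ge0$ with equality, $d(f\vee g)=df$; thus all four differentials coincide a.e.\ on $E$ and the integrand again vanishes there.

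Once property (4) is established, Theorem~\ref{theorem_characterization_differential_cycle} gives that $f\wedge g$ is Monge-Amp\`ere with $D(f\wedge g)=S$, which is the asserted identity. I expect the only genuinely delicate step to be the analysis on $E=\{f=g\}$: one needs to be confident that Rademacher differentiability of the not-yet-known-to-be-Monge-Amp\`ere function $f\wedge g$ is enough, and that the ``touching from above/below'' comparison pins down its differential a.e.\ on $E$; the rest is formal manipulation of currents together with a measure-zero case distinction.
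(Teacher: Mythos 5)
Your argument is correct: since the paper states this result as a citation of Fu's Proposition 2.9 without reproducing a proof, the relevant comparison is with Fu's original argument, and your strategy is essentially that one — verify the four characterizing properties of Theorem \ref{theorem_characterization_differential_cycle} for the signed combination of integral currents and invoke uniqueness, the only analytic point being that the differentials of $f$, $g$, $f\vee g$, $f\wedge g$ agree $\vol$-a.e.\ on $\{f=g\}$, which your touching-from-above/below argument at common points of differentiability settles. No gaps; the remaining steps (closedness, Lagrangianity, local vertical boundedness, and the symmetric treatment of the four cases) are indeed formal.
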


		By \cite{Fu:Monge-Ampere_1} Proposition 3.1 all convex functions are Monge-Amp\`ere. We will now show that $D:\Conv(V,\R)\rightarrow I_n(T^*V)$ is continuous with respect to the local flat metric topology on the space  $I_n(T^*V)$ of integral currents of dimension $n$ on $T^*V$, i.e. the topology induced by the family of semi-norms
		\begin{align*}
			\|T\|_{A,\flat}&:=\sup\left\{|T(\omega)| : \supp \omega\subset A, \left\|\omega\right\|^\flat\le 1\right\} &&\text{for }T\in I_n(T^*V)\text{, where}\\
			\left\|\omega\right\|^\flat&:=\max (\|\omega\|_0,\|d\omega\|_0) &&\text{for }\omega\in\Omega^n(T^*V),
		\end{align*} 
		for $A\subset T^*V$ compact, where $\|\cdot\|_0$ denotes the $C^0$-norm. Recall also that the mass $M_U(T)$ of an $n$-current $T$ on an open subset $U$ is defined by
		\begin{align*}
			M_U(T):=\sup\{|T(\omega)|: \supp\omega\subset U, \|\omega\|_0\le 1\}.
		\end{align*}
		The proof is based on the following approximation result.
		\begin{proposition}[\cite{Fu:Monge-Ampere_1} Proposition 2.7.]
			\label{proposition_Fu_convergence_differential_cycle}
			Let $f_1,f_2,\dots:V\rightarrow\R$ be a sequence of Monge-Amp\`ere functions, and suppose that for each bounded open subset $U\subset V$ there exists a constant $C$ such that
			\begin{enumerate}
				\item $\mathrm{lip}(f_j|_U)\le C$
				\item $M_{\pi^{-1}(U)}(D(f_j))\le C$
			\end{enumerate}
			for all $j\in\mathbb{N}$. If $f=\lim\limits_{j\rightarrow\infty}f_j$ in the $C^0$-topology, then $f$ is Monge-Amp\`ere, with
			\begin{align*}
				D(f)=\lim\limits_{j\rightarrow\infty}D(f_j)
			\end{align*}
			in the local flat metric topology.
		\end{proposition}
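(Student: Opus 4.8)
The plan is to combine the Federer--Fleming compactness theorem for integral currents with the uniqueness statement in Theorem~\ref{theorem_characterization_differential_cycle}: a subsequential limit of the $D(f_j)$ will exist for soft reasons, and it will be forced to coincide with $D(f)$.

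Fix a bounded open set $U\subset V$. By hypothesis (1) and Theorem~\ref{theorem_FU_support_Differential_cycle}, every $D(f_j)$ satisfies $\supp D(f_j)\cap\pi^{-1}(U)\subset \overline U\times B_C(0)$, a fixed compact subset of $T^*V$; hypothesis (2) bounds the masses $M_{\pi^{-1}(U)}(D(f_j))$ uniformly in $j$, and $\partial D(f_j)=0$ makes the boundary masses vanish. Choosing an exhaustion of $V$ by such sets and applying the compactness theorem together with a diagonal argument, I would extract a subsequence $(D(f_{j_k}))_k$ converging in the local flat metric to an integral current $S\in I_n(T^*V)$; by lower semicontinuity of mass and the uniform support bounds, $S$ has locally bounded mass and $\supp S\cap\pi^{-1}(U)\subset\overline U\times B_C(0)$ as well.

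Next I would check that $S$ has the four properties characterizing $D(f)$ in Theorem~\ref{theorem_characterization_differential_cycle}. Closedness is immediate because $\partial$ is continuous for the flat topology and $\partial D(f_{j_k})=0$; local vertical boundedness is the support bound just recorded. For the Lagrangian property, flat convergence with uniform local mass bounds gives weak convergence $D(f_{j_k})\to S$, hence $S(\omega_s\wedge\beta)=\lim_k D(f_{j_k})(\omega_s\wedge\beta)=0$ for every compactly supported $(n-2)$-form $\beta$, i.e.\ $S\llcorner\omega_s=0$. The genuinely delicate point is the trace identity $S(\phi\,\pi^*\vol)=\int_V\phi(x,df(x))\,d\vol(x)$ for $\phi\in C_c^\infty(T^*V)$: its left side is the limit of $D(f_{j_k})(\phi\,\pi^*\vol)=\int_V\phi(x,df_{j_k}(x))\,d\vol(x)$, the pairing of $\phi$ with the graph measure $(\mathrm{id}\times df_{j_k})_*\vol$, so the whole matter reduces to showing that these graph measures converge weakly to $(\mathrm{id}\times df)_*\vol$. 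One uses that the $C^0$-convergence $f_{j_k}\to f$ and upper semicontinuity of the generalized differential confine every weak limit of the graph measures to $\mathrm{graph}\,\partial^*f$, while the mass bound (2) prevents the gradients from oscillating vertically, forcing the $\pi$-pushforward of the limit to be $\vol$; since $\partial^*f$ is single-valued $\vol$-a.e.\ and the slices of the Lagrangian current $S$ over $V$ recover $df$, the limit can only be $(\mathrm{id}\times df)_*\vol$. I expect this step --- excluding vertical oscillation of the approximating gradients using only the mass bound --- to be the main obstacle; the rest is routine.

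Uniqueness in Theorem~\ref{theorem_characterization_differential_cycle} then gives $S=D(f)$, so $f$ is Monge-Amp\`ere. Since the argument applies verbatim to any subsequence of $(D(f_j))_j$ and always produces the same limit, the full sequence converges: $D(f_j)\to D(f)$ in the local flat metric topology.
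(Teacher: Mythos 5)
First, a point of comparison: the paper does not prove this statement at all --- it is quoted from Fu (Proposition 2.7 of \cite{Fu:Monge-Ampere_1}) --- so your attempt can only be judged against Fu's argument, not against anything internal to this article. Your skeleton is the natural one: local Federer--Fleming compactness for the boundaryless currents $D(f_j)$ (using the mass bound (2) and the vertical support bound coming from (1) via Theorem \ref{theorem_FU_support_Differential_cycle}), identification of a subsequential limit $S$ through the uniqueness part of Theorem \ref{theorem_characterization_differential_cycle}, and the standard subsequence trick to upgrade to convergence of the full sequence. The soft verifications you list (closedness, $S\llcorner\omega_s=0$, local vertical boundedness) are indeed routine.

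The gap is exactly at the step you flag, the trace identity $S(\phi\,\pi^*\vol)=\int_V\phi(x,df(x))\,d\vol(x)$, and the sketch you give for it does not work. The confinement of weak limits of the graph measures $(\mathrm{id}\times df_j)_*\vol$ to $\mathrm{graph}\,\partial^*f$ cannot follow from $C^0$-convergence plus upper semicontinuity of the generalized differential: upper semicontinuity of $\partial^*f$ is a statement in the point variable for a \emph{fixed} function and does not pass to uniform limits. Concretely, $f_j(x)=j^{-1}\sin(jx_1)$ converges uniformly to $f=0$ with $\mathrm{lip}(f_j)\le 1$, yet the graph measures converge weakly to a measure spread over $V\times\bigl([-1,1]\times\{0\}^{n-1}\bigr)$, which is not carried by $\mathrm{graph}\,\partial^*0=V\times\{0\}$; in this example only the mass bound (2) fails, so any confinement argument must use (2) in an essential way, and your proposal never says how. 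Conversely, the one job you do assign to the mass bound --- that $\pi_*$ of the limit measure is $\vol$ --- is automatic, since $\pi_*\bigl((\mathrm{id}\times df_j)_*\vol\bigr)=\vol$ for every $j$ and the supports are uniformly vertically bounded over compacta by the Lipschitz bound alone. Two further points: the appeal to ``the slices of the Lagrangian current $S$ over $V$ recover $df$'' presupposes precisely property (4), which is what you are trying to verify; and the claim that $\partial^*f$ is single-valued $\vol$-a.e.\ is false for general Lipschitz (hence general Monge--Amp\`ere) functions --- there are Lipschitz functions on $\R$ whose Clarke differential is $[-1,1]$ at every point --- although it is harmless for the convex functions to which this paper applies the proposition. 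So the heart of the proof --- ruling out vertical spreading of mass in the limit using only (1), (2) and $C^0$-convergence, which in Fu's paper requires genuine work with the structure of closed Lagrangian integral currents rather than soft compactness --- is missing from the proposal.
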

		The two necessary bounds are established by the following two lemmas.
		\begin{lemma}
			\label{lemma_convex_functions_local_lipschitz_constants}
			Let $f:V\rightarrow\mathbb{R}$ be a convex function, and let $B_R$ denote the ball of radius $R>0$ in $V$. Then $f|_{B_R}$ is Lipschitz-continuous with Lipschitz-constant bounded by $2\sup_{|x|\le R+1}|f(x)|$.
		\end{lemma}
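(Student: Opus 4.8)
The plan is to run the classical chord-extension argument that controls the difference quotients of a convex function by its sup-norm on a slightly larger ball. Fix two distinct points $x,y\in B_R$ and set $d:=|x-y|$. I would extend the segment $[x,y]$ beyond $y$ by one unit, i.e. put
\begin{align*}
	z:=y+\frac{y-x}{|y-x|},
\end{align*}
so that $|z|\le |y|+1\le R+1$, hence $z\in B_{R+1}$ (while trivially $x\in B_{R+1}$ as well). A direct computation shows that $y$ is the convex combination
\begin{align*}
	y=\frac{1}{d+1}\,x+\frac{d}{d+1}\,z.
\end{align*}

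Next I would invoke convexity of $f$ to obtain $f(y)\le\frac{1}{d+1}f(x)+\frac{d}{d+1}f(z)$, which after rearranging reads
\begin{align*}
	f(y)-f(x)\le\frac{d}{d+1}\bigl(f(z)-f(x)\bigr).
\end{align*}
Since $\tfrac{d}{d+1}\le d$, the right-hand side is bounded by $d\max\{0,f(z)-f(x)\}\le d\bigl(|f(z)|+|f(x)|\bigr)$, and because both $x$ and $z$ lie in $B_{R+1}$ this is at most $2d\sup_{|w|\le R+1}|f(w)|$. Thus $f(y)-f(x)\le 2|x-y|\sup_{|w|\le R+1}|f(w)|$. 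Interchanging the roles of $x$ and $y$ (equivalently, extending the chord past $x$ rather than past $y$) yields the same bound for $f(x)-f(y)$, and combining the two inequalities gives the asserted Lipschitz estimate for $f|_{B_R}$.

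I do not expect any genuine obstacle; the argument is elementary. The only points requiring a little care are keeping track of the sign of $f(z)-f(x)$ so that the estimate $\tfrac{d}{d+1}(f(z)-f(x))\le d(|f(z)|+|f(x)|)$ holds in all cases, and noting that $\sup_{|w|\le R+1}|f(w)|$ is finite — which follows from the fact that a finite-valued convex function on the finite-dimensional space $V$ is continuous and hence bounded on the compact ball $B_{R+1}$ (and in any event the claimed inequality is vacuous should this supremum be infinite).
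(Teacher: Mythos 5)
Your argument is correct: the decomposition $y=\frac{1}{d+1}x+\frac{d}{d+1}z$ with $z=y+\frac{y-x}{|y-x|}\in B_{R+1}$ is right, the convexity inequality rearranges as you say, and your handling of the sign via $\frac{d}{d+1}\bigl(f(z)-f(x)\bigr)\le d\max\{0,f(z)-f(x)\}\le d\bigl(|f(z)|+|f(x)|\bigr)$ closes the estimate, with symmetry in $x,y$ giving the two-sided bound $|f(x)-f(y)|\le 2|x-y|\sup_{|w|\le R+1}|f(w)|$. The only difference from the paper is that the paper does not argue at all: it disposes of the lemma by citing Rockafellar--Wets, \emph{Variational Analysis}, 9.14, which contains exactly this kind of local Lipschitz estimate for finite convex functions. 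So your route is the classical chord-extension proof written out in full, which buys self-containedness and makes transparent why the constant is the oscillation $2\sup_{|x|\le R+1}|f(x)|$ over a gap of width $1$, whereas the paper's citation buys brevity. Your closing remarks are also sound: finiteness of $\sup_{|w|\le R+1}|f(w)|$ follows from continuity of finite-valued convex functions on a finite-dimensional space (a classical fact, independent of the Lipschitz estimate being proved), so there is no circularity and no gap.
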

		\begin{proof}
			This is a special case of \cite{Rockafellar_Wets:Variational_analysis} 9.14.
		\end{proof}
		For $R>0$, let $U_R\subset V$ denote the open ball of radius $R$ centered at the origin.
		\begin{lemma}
			\label{lemma_mass_estimate_Differential_cycle}
			For $f\in\Conv(V,\R)$, $M_{\pi^{-1}(U_R)}(D(f))\le2^n\omega_n \left(\sup_{|x|\le R+1}|f(x)|\right)^n$.
		\end{lemma}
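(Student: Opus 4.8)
The plan is to compute $M_{\pi^{-1}(U_R)}(D(f))$ as the $\mathcal H^n$-measure of a piece of the graph of the subdifferential of $f$, and then to exploit the monotonicity of $\partial^* f$ to turn that graph into the graph of a $1$-Lipschitz function by an orthogonal rotation of $T^*V$, for which the area is elementary to bound. For the first point I would recall from \cite{Fu:Monge-Ampere_1} (Proposition~3.1 and the discussion around it) that every $f\in\Conv(V,\R)$ is Monge-Amp\`ere and that $D(f)$ is obtained by integrating, \emph{with multiplicity one}, over the $n$-rectifiable set $\operatorname{graph}\partial^* f=\{(x,y)\in T^*V:y\in\partial^* f(x)\}$ with the orientation induced by $\vol$; alternatively, one checks that this current satisfies the four characterizing properties of Theorem~\ref{theorem_characterization_differential_cycle}, or one mollifies $f$ to a smooth convex $f_\varepsilon$ (for which the statement below is unambiguous, $D(f_\varepsilon)=\llbracket\operatorname{graph} df_\varepsilon\rrbracket$) and passes to the limit using Proposition~\ref{proposition_Fu_convergence_differential_cycle} together with lower semicontinuity of mass on open sets. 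Since the mass measure of a multiplicity-one integral current is $\mathcal H^n$ restricted to its carrier, this gives
\[
M_{\pi^{-1}(U_R)}(D(f))=\mathcal H^n\big(\operatorname{graph}\partial^* f\cap(U_R\times V^*)\big).
\]

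Next I would use the scalar product to identify $V^*\cong V$ and introduce the orthogonal map $\Phi\colon V\times V^*\to V\times V^*$, $\Phi(x,y)=\tfrac1{\sqrt2}(x+y,\,y-x)$, with inverse $\Phi^{-1}(u,v)=\tfrac1{\sqrt2}(u-v,\,u+v)$. For two points $(x_i,y_i)\in\operatorname{graph}\partial^* f$, monotonicity of the subdifferential gives $\langle y_1-y_2,x_1-x_2\rangle\ge0$, and for the images $(u_i,v_i)=\Phi(x_i,y_i)$ one computes $|v_1-v_2|^2-|u_1-u_2|^2=-2\langle x_1-x_2,y_1-y_2\rangle\le0$. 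Hence $v$ is a well-defined $1$-Lipschitz function $\psi$ of $u$ on the projection of $\Phi(\operatorname{graph}\partial^* f)$ to the first factor (which is all of $V$ by Minty's theorem, as $f$ is finite-valued), i.e.\ $\Phi(\operatorname{graph}\partial^* f)=\operatorname{graph}\psi$. As $\Phi$ is an isometry, $M_{\pi^{-1}(U_R)}(D(f))=\mathcal H^n\big(\operatorname{graph}\psi\cap\Phi(U_R\times V^*)\big)$, and $\Phi(U_R\times V^*)=\{(u,v):\tfrac1{\sqrt2}(u-v)\in U_R\}$.

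For the final estimate, fix $u$ with $(u,\psi(u))\in\Phi(U_R\times V^*)$ and let $(x,y)=\Phi^{-1}(u,\psi(u))$, so $x=\tfrac1{\sqrt2}(u-\psi(u))\in U_R$ and $y=\tfrac1{\sqrt2}(u+\psi(u))\in\partial^* f(x)$; since $x\in U_R$ we get $|y|\le\mathrm{lip}(f|_{U_R})=:L$, and therefore $2|u|\le|u-\psi(u)|+|u+\psi(u)|<\sqrt2\,(R+L)$. Thus the relevant part of $\operatorname{graph}\psi$ lies over a set $\Omega_R\subset B_{(R+L)/\sqrt2}(0)$, and because $\psi$ is $1$-Lipschitz the eigenvalues of $D\psi^{\mathsf T}D\psi$ lie in $[0,1]$ a.e., so the area formula yields
\[
\mathcal H^n\big(\operatorname{graph}\psi\cap\Phi(U_R\times V^*)\big)=\int_{\Omega_R}\sqrt{\det\!\big(I+D\psi^{\mathsf T}D\psi\big)}\;du\le 2^{n/2}\,\mathcal L^n(\Omega_R)\le 2^{n/2}\,\omega_n\Big(\tfrac{R+L}{\sqrt2}\Big)^{\!n}=\omega_n(R+L)^n.
\]
Inserting $L\le 2\sup_{|x|\le R+1}|f(x)|$ from Lemma~\ref{lemma_convex_functions_local_lipschitz_constants} then gives the asserted bound. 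The one step that is not purely formal is the identification in the first paragraph — that $D(f)$ is multiplicity-one integration over $\operatorname{graph}\partial^* f$ with no cancellation — which is exactly where convexity of $f$ enters; the rotation of the graph and the area bound for $1$-Lipschitz graphs are routine.
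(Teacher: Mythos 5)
Your core estimate takes a genuinely different route from the paper: the paper first passes to $f+\epsilon|\cdot|^2$ and a mollification $f_h$, quotes the mass bound from the proof of \cite{Fu:Monge-Ampere_1} Proposition 3.1 in terms of the Lipschitz constant, and then concludes twice by lower semicontinuity of mass under weak convergence, whereas you make the key mass bound self-contained via the Minty rotation $\Phi(x,y)=\tfrac1{\sqrt2}(x+y,y-x)$, which turns the (sub)gradient graph of a convex function into a $1$-Lipschitz graph, followed by the area formula. That part of your argument is sound. Two remarks on the setup: the identification of $D(f)$ with multiplicity-one integration over $\mathrm{graph}\,\partial^*f$ for a general convex $f$ is stronger than what your mollification fallback actually delivers (lower semicontinuity only yields $M_{\pi^{-1}(U_R)}(D(f))\le\liminf_\varepsilon M_{\pi^{-1}(U_R)}(D(f_\varepsilon))$, not equality with $\mathcal{H}^n$ of the carrier), but the inequality is all you need; the cleanest version of your proof therefore runs the rotation argument for smooth convex $f_\varepsilon$, where $D(f_\varepsilon)$ is unambiguously the multiplicity-one graph of $df_\varepsilon$, and then passes to the limit via Proposition \ref{proposition_Fu_convergence_differential_cycle}/Lemma \ref{lemma_convex_functions_local_lipschitz_constants} exactly as the paper does.

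The genuine gap is your last sentence. What you prove is $M_{\pi^{-1}(U_R)}(D(f))\le\omega_n(R+L)^n\le\omega_n\bigl(R+2\sup_{|x|\le R+1}|f|\bigr)^n$, and this does \emph{not} imply the asserted bound $2^n\omega_n\bigl(\sup_{|x|\le R+1}|f|\bigr)^n=\omega_n\bigl(2\sup_{|x|\le R+1}|f|\bigr)^n$: the additive $R$ cannot be absorbed, so "inserting $L\le 2\sup|f|$" is a non sequitur. Moreover, no argument can close this gap, since the inequality as stated fails already for $f\equiv 0$: there $D(f)$ is the zero section, so $M_{\pi^{-1}(U_R)}(D(0))=\omega_nR^n>0$ while the right-hand side vanishes. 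So your bound $\omega_n(R+L)^n$ is in fact the correct form of the estimate (and it is fully adequate for every subsequent use of the lemma, namely the locally uniform mass bounds needed for Theorem \ref{theorem_continuity_D_on_convex_functions} and Corollary \ref{corollary_Continuity_valuations_defined_by_differential_cycle}), but you should state it as such rather than claim it reproduces the bound in the statement; the discrepancy originates in the $R$-independent form of the quoted estimate from Fu's proof, which your direct computation shows cannot hold without a term accounting for the radius of $U_R$.
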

		\begin{proof}
			We will prove the following estimate:
			\begin{align*}
				M_{\pi^{-1}(U_R)}(D(f+\epsilon |\cdot|^2))\le2^n\omega_n\left(\sup_{|x|\le R+1}\left|f(x)+\epsilon |x|^2\right|\right)^n(1+\epsilon)^{\frac{n}{2}}.
			\end{align*}
			As $D(f+\epsilon |\cdot|^2)=G_{\epsilon*}D(f)$ for $G_\epsilon(x,y)=(x,y+2\epsilon x)$ due to Proposition \ref{proposition_Fu_sum_MA_andC11}, we see that $D(f+\epsilon |x|^2)$ converges to $D(f)$ weakly for $\epsilon\rightarrow0$, so the claim follows from this inequality using the lower semi-continuity of the mass norm.\\
			For some $\phi\in C^\infty_c(U_1,[0,\infty))$ with $\int_{V}\phi(x) d\vol(x)=1$ set $\phi_h(x):=h^{-n}\phi(\frac{x}{h})$ and consider the convolution $f_h:=(f+\epsilon |\cdot|^2)*\phi_h$ for $h>0$. Fu observed in the proof of \cite{Fu:Monge-Ampere_1} Proposition 3.1. that in this case 
			\begin{align*}
			M_{\pi^{-1}(U)}(D(f_h)\le \omega_n r^n(1+\epsilon)^{\frac{n}{2}}
			\end{align*}
			for any bounded open subset $U\subset V$, where $r>0$ can be chosen to be the Lipschitz-constant of $f+\epsilon |\cdot|^2$ on $\{x\in V:d(x,U)<h\}$. For $U=U_R$, we may thus choose $r=2\sup_{|x|\le R+1+h}\left|f(x)+\epsilon |x|^2\right|$ by Lemma \ref{lemma_convex_functions_local_lipschitz_constants}. The proof of \cite{Fu:Monge-Ampere_1} Proposition 3.1. shows that $D(f_h)\rightarrow D(f+\epsilon|\cdot|^2)$ in the local flat metric topology for $h\rightarrow0$, so in particular, $D(f_h)$ converges weakly to $D(f+\epsilon |\cdot|^2)$. The lower semi-continuity of the mass norm thus implies
			\begin{align*}
				M_{\pi^{-1}(U_R)}(D(f+\epsilon |\cdot |^2))\le \omega_n \left(2\sup_{|x|\le R+1}\left|f(x)+\epsilon |x|^2\right|\right)^n(1+\epsilon)^{\frac{n}{2}}.
			\end{align*}
		\end{proof}
		\begin{theorem}
			\label{theorem_continuity_D_on_convex_functions}
			$D:\Conv(V,\R)\rightarrow I^n(T^*V)$ is continuous with respect to the local flat metric topology on $I^n(T^*V)$.
		\end{theorem}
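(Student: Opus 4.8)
The plan is to reduce continuity of $D$ on $\Conv(V,\R)$ to the convergence criterion in Proposition \ref{proposition_Fu_convergence_differential_cycle}, using the two a priori bounds just established. Since $\Conv(V,\R)$ carries the topology of uniform convergence on compact sets and this topology is metrizable, it suffices to prove sequential continuity: given $f_j\to f$ in $\Conv(V,\R)$, I want to show $D(f_j)\to D(f)$ in the local flat metric topology on $I_n(T^*V)$. First I would fix an arbitrary bounded open subset $U\subset V$, say $U\subset U_R$ for some $R>0$. By Lemma \ref{lemma_convex_functions_local_lipschitz_constants}, the Lipschitz constants $\mathrm{lip}(f_j|_{U_R})$ are bounded by $2\sup_{|x|\le R+1}|f_j(x)|$, and since $f_j\to f$ uniformly on the compact set $\{|x|\le R+1\}$, the quantities $\sup_{|x|\le R+1}|f_j(x)|$ are uniformly bounded in $j$; hence condition (1) of Proposition \ref{proposition_Fu_convergence_differential_cycle} holds with a constant $C=C(U)$ independent of $j$. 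Similarly, Lemma \ref{lemma_mass_estimate_Differential_cycle} gives $M_{\pi^{-1}(U_R)}(D(f_j))\le 2^n\omega_n(\sup_{|x|\le R+1}|f_j(x)|)^n$, which is again uniformly bounded in $j$ by the same uniform convergence, so condition (2) holds as well. All $f_j$ and $f$ are Monge-Amp\`ere by \cite{Fu:Monge-Ampere_1} Proposition 3.1.

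Having verified the hypotheses, Proposition \ref{proposition_Fu_convergence_differential_cycle} applies to the sequence $(f_j)_j$ and its $C^0$-limit $f$, yielding $D(f_j)\to D(f)$ in the local flat metric topology. This proves sequential continuity of $D$. To upgrade to genuine continuity, I would invoke the metrizability of both the source (the topology of uniform convergence on compact subsets on $\Conv(V,\R)$ is induced by a countable family of seminorms, hence metrizable) and the target: the local flat metric topology on $I_n(T^*V)$ is defined by the countable family of seminorms $\|\cdot\|_{A_m,\flat}$ for an exhausting sequence of compacts $A_m\subset T^*V$, hence is also metrizable. Between metrizable spaces sequential continuity is equivalent to continuity, so the theorem follows.

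The main point requiring a little care — though it is not really an obstacle — is the passage from a fixed bounded $U$ to the full local flat metric topology: the seminorms $\|\cdot\|_{A,\flat}$ are indexed by compact subsets $A\subset T^*V$, not by subsets of $V$, so I should note that any compact $A\subset T^*V$ is contained in $\pi^{-1}(U_R)\cap(V\times B_\rho(0))$ for suitable $R,\rho$, and that on such sets flat convergence is exactly what Proposition \ref{proposition_Fu_convergence_differential_cycle} delivers (the mass bounds on $\pi^{-1}(U_R)$ being precisely what is needed to control the flat seminorms there). A secondary bookkeeping point is that $f_j\to f$ uniformly on compacts indeed forces $\sup_{|x|\le R+1}|f_j(x)|$ to stay bounded, which is immediate since the sequence of continuous functions converges uniformly on the compact ball and is therefore uniformly bounded there. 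With these remarks the argument is complete.
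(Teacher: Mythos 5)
Your proposal is correct and follows essentially the same route as the paper: both verify the uniform local Lipschitz bounds via Lemma \ref{lemma_convex_functions_local_lipschitz_constants} and the uniform local mass bounds via Lemma \ref{lemma_mass_estimate_Differential_cycle}, then apply Proposition \ref{proposition_Fu_convergence_differential_cycle} to conclude flat convergence. The extra remarks on metrizability and on exhausting compacts in $T^*V$ are harmless bookkeeping that the paper leaves implicit.
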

		\begin{proof}
			If $f_j\rightarrow f$ in $\Conv(V,\R)$, their Lipschitz-constants are locally uniformly bounded by Lemma \ref{lemma_convex_functions_local_lipschitz_constants}. The mass estimate from Lemma \ref{lemma_mass_estimate_Differential_cycle} shows that the mass of $D(f_j)$ is locally uniformly bounded as well. Thus $D(f_j)\rightarrow D(f)$ in the local flat metric topology by Proposition \ref{proposition_Fu_convergence_differential_cycle}.
		\end{proof}
		Let $\Omega^k_{hc}(T^*V)$ denote the space of all smooth $k$-forms $\tau$ on $T^*V$ with horizontally compact support, i.e. such that $\supp\tau\subset \pi^{-1}(K)$ for some compact set $K\subset V$.
		\begin{corollary}
			\label{corollary_Continuity_valuations_defined_by_differential_cycle}
			For each $\tau\in\Omega^n_{hc}(T^*V)$, $f\mapsto D(f)[\tau]$ defines a continuous valuation on $\Conv(V,\R)$.
		\end{corollary}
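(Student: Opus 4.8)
The plan is to verify separately that $\mu(f):=D(f)[\tau]$ is a well-defined valuation and that it is continuous. For well-definedness, recall that by \cite{Fu:Monge-Ampere_1} Proposition 3.1 every convex function is Monge-Amp\`ere, so $D(f)$ exists and, by property (3) of Theorem \ref{theorem_characterization_differential_cycle}, is locally vertically bounded. Since $\tau$ has horizontally compact support, say $\supp\tau\subset\pi^{-1}(K)$ with $K\subset V$ compact, the set $\supp D(f)\cap\supp\tau$ is a closed subset of the compact set $\supp D(f)\cap\pi^{-1}(K)$, hence compact, and $D(f)[\tau]$ is a well-defined real number.

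For the valuation property I would invoke Proposition \ref{proposition_Fu_valuation_property_Differential_cycle} directly. Given $f,h\in\Conv(V,\R)$ with $f\vee h,f\wedge h\in\Conv(V,\R)$ (in fact one only needs $f\vee h$ convex, which is automatic, together with local Lipschitzness of $f\wedge h$), the functions $f,h,f\vee h$ are convex, hence Monge-Amp\`ere, while $f\wedge h$ is locally Lipschitz, so the proposition gives that $f\wedge h$ is Monge-Amp\`ere and $D(f)+D(h)=D(f\vee h)+D(f\wedge h)$ as $n$-currents. Since $f\wedge h$ is Monge-Amp\`ere, $D(f\wedge h)$ is again locally vertically bounded, so all four currents pair with $\tau$, and evaluating the identity on $\tau$ yields $\mu(f)+\mu(h)=\mu(f\vee h)+\mu(f\wedge h)$.

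For continuity, take $f_j\to f$ in $\Conv(V,\R)$, i.e.\ uniformly on compact sets, and choose an open ball $U_R$ with $K\subset U_R$. The $f_j$ are uniformly bounded on $\overline{U_{R+1}}$, so Lemma \ref{lemma_convex_functions_local_lipschitz_constants} yields a constant $C$ with $\mathrm{lip}(f_j|_{U_R})\le C$ for all $j$, and Theorem \ref{theorem_FU_support_Differential_cycle} then gives $\supp D(f_j)\cap\pi^{-1}(K)\subset K\times\overline{B_C(0)}=:A_0$, a single compact set independent of $j$ (and the same holds for $D(f)$). Pick $\chi\in C^\infty_c(T^*V)$ with $\chi\equiv1$ on a neighborhood of $A_0$; then $\chi\tau\in\Omega^n_c(T^*V)$, the form $(1-\chi)\tau$ vanishes near $\supp D(f_j)\cap\supp\tau$ and near $\supp D(f)\cap\supp\tau$, so $D(f_j)[\tau]=D(f_j)[\chi\tau]$ and $D(f)[\tau]=D(f)[\chi\tau]$. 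With $A_1:=\supp(\chi\tau)$ and $c:=\|\chi\tau\|^\flat<\infty$ one obtains $|\mu(f_j)-\mu(f)|=|(D(f_j)-D(f))[\chi\tau]|\le c\,\|D(f_j)-D(f)\|_{A_1,\flat}$, which tends to $0$ by Theorem \ref{theorem_continuity_D_on_convex_functions}.

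The only step requiring genuine care is this reduction to a compactly supported test form: because $\tau$ is merely horizontally (not honestly) compactly supported, it cannot be paired directly against currents that only converge in the local flat metric, and one must first confine $\supp D(f_j)$ uniformly in the fiber direction, which is exactly what the joint input of Lemma \ref{lemma_convex_functions_local_lipschitz_constants} and Theorem \ref{theorem_FU_support_Differential_cycle} provides. Everything after that is a direct application of Theorem \ref{theorem_continuity_D_on_convex_functions}, and the valuation identity is an immediate consequence of Proposition \ref{proposition_Fu_valuation_property_Differential_cycle}.
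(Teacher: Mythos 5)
Your proposal is correct and follows essentially the same route as the paper: well-definedness from the vertically bounded support of $D(f)$ against the horizontally compact support of $\tau$, the valuation identity from Proposition \ref{proposition_Fu_valuation_property_Differential_cycle}, and continuity by combining Lemma \ref{lemma_convex_functions_local_lipschitz_constants} with Theorem \ref{theorem_FU_support_Differential_cycle} to confine the supports uniformly, cutting off with a compactly supported bump, and applying the flat-norm estimate together with Theorem \ref{theorem_continuity_D_on_convex_functions}. The only difference is that you spell out the reduction to a compactly supported test form and the valuation step in slightly more detail than the paper does.
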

		\begin{proof}
			Let $K\subset V$ be a compact subset with $\supp\tau\subset \pi^{-1}(K)$. As the support of the differential cycle is vertically bounded, $\supp D(f)\cap \supp\tau$ is compact for every $f\in\Conv(V,\R)$, so $D(f)[\tau]$ is well defined for all $f\in\Conv(V,\R)$. Furthermore, Proposition \ref{proposition_Fu_valuation_property_Differential_cycle} shows that this functional satisfies the valuation property.\\
			To see that it is continuous, let $(f_j)_j$ be a sequence in $\Conv(V,\R)$ converging to $f\in\Conv(V,\R)$ uniformly on compact subsets. Choose $R>0$ such that $U_R$ contains $K$. As $f_j\rightarrow f$ uniformly on $U_R$, Lemma  \ref{lemma_convex_functions_local_lipschitz_constants} implies that the Lipschitz-constants of these functions are bounded on $U_R$ by some $L>0$. Now Theorem \ref{theorem_FU_support_Differential_cycle} shows that $\supp D(f_j)\cap\pi^{-1}(U_R)\subset U_R\times B_L(0)$. Let $A$ be a compact neighborhood of $B_R\times B_L(0)$ and $\phi\in C^\infty_c(T^*V)$ a function with $\phi=1$ on a neighborhood of $U_R\times B_L(0)$ and $\supp\phi\subset A$. Then
			\begin{align*}
				\left|D(f_j)[\tau]-D(f)[\tau]\right|=&\left|\left(D(f_j)-D(f)\right)[\phi\cdot \tau]\right|\le \left\|D(f_j)-D(f)\right\|_{A,\flat}\cdot \left\|\phi\cdot\tau\right\|^\flat.
			\end{align*}
			Now the claim follows from Theorem \ref{theorem_continuity_D_on_convex_functions}, as $||\phi\cdot \tau||^\flat<\infty$.
		\end{proof}
	\subsection{Kernel theorem}
		\label{section_kernel_theorem}
		By the previous section, any $\tau\in\Omega_{hc}^n(T^*V)$ defines a continuous valuation $\Conv(V,\R)\rightarrow\R$, $f\mapsto D(f)[\tau]$. To decide which differential forms induce the trivial valuation, we will need a symplectic version of the Rumin operator. The construction is based on the following decomposition (see \cite{Huybrechts:Complex_geometry} Proposition 1.2.30), also called \emph{Lefschetz decomposition}.
		\begin{proposition}
			\label{proposition_Lefschetz-decomposition}
			Let $(W,\omega_s)$ be a symplectic vector space of dimension $2n$ and let $L:\Lambda^*W^*\rightarrow \Lambda^*W^*$, $\tau\mapsto \omega_s\wedge\tau$ be the Lefschetz operator. For $0\le k\le n$ let $P^k:=\{\tau\in\Lambda^kW^*:L^{n-k+1}\tau=0\}$ denote the space of primitive forms. Then the following holds:
			\begin{enumerate}
				\item There exists a direct sum decomposition $\Lambda^kW^*=\bigoplus_{i\ge 0}L^{i}P^{k-2i}$.
				\item $L^{n-k}:\Lambda^kW^*\rightarrow\Lambda^{2n-k}W^*$ is an isomorphism.
			\end{enumerate}
		\end{proposition}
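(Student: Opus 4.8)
The plan is to realise $\Lambda^*W^*$ as a finite-dimensional module over the Lie algebra $\mathfrak{sl}_2(\R)$ and to read off both assertions from the representation theory of $\mathfrak{sl}_2$. Alongside the Lefschetz operator $L$ I would introduce the grading operator $H$, acting on $\Lambda^kW^*$ as multiplication by $k-n$, and a \emph{dual Lefschetz operator} $\Lambda\colon\Lambda^*W^*\to\Lambda^{*-2}W^*$: fixing a Darboux basis $e_1,\dots,e_n,f_1,\dots,f_n$ of $W$ with $\omega_s=\sum_{i=1}^n e_i^*\wedge f_i^*$, one sets $\Lambda:=\sum_{i=1}^n\iota_{f_i}\iota_{e_i}$ (contraction with the Poisson bivector dual to $\omega_s$; the ordering of the two contractions is forced by the sign we will need). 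The key computation is that $(L,\Lambda,H)$ is an $\mathfrak{sl}_2$-triple, i.e.\ $[H,L]=2L$, $[H,\Lambda]=-2\Lambda$ and $[L,\Lambda]=H$. The first two identities are immediate from the degree shifts. For the third I would invoke the tensor decomposition $\Lambda^*W^*\cong\bigotimes_{i=1}^n\Lambda^*\langle e_i^*,f_i^*\rangle$, under which $L=\sum_i L_i$, $\Lambda=\sum_i\Lambda_i$ and $H=\sum_i H_i$ with the $i$-th triple acting only on the $i$-th tensor factor and commuting with the others; this reduces $[L,\Lambda]=H$ to a one-line check for $n=1$ on the four-dimensional space $\Lambda^*\langle e^*,f^*\rangle$.

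With the $\mathfrak{sl}_2$-module structure in place, Weyl's complete reducibility theorem splits $\Lambda^*W^*$ into irreducible summands; since $L$ and $\Lambda$ are homogeneous of degrees $+2$ and $-2$ for the exterior grading, these summands may be chosen graded, so each is a single weight string generated by a homogeneous lowest-weight vector $w_\alpha$, of some degree $d_\alpha$, and the corresponding irreducible $V_\alpha$ has highest weight $n-d_\alpha\ge 0$. The next step is the identification $P^k=\ker\bigl(\Lambda|_{\Lambda^kW^*}\bigr)$ for $0\le k\le n$: a homogeneous element of $\Lambda^kW^*$ has $H$-weight $k-n\le 0$, and in a finite-dimensional $\mathfrak{sl}_2$-module a weight vector of weight $-m\le 0$ is annihilated by $\Lambda$ if and only if it is annihilated by $L^{m+1}$. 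Both implications follow by decomposing such a vector along the $V_\alpha$, noting that its component in $V_\alpha$ sits in a definite position of the weight string, and comparing that position with the string's length; for $m=n-k$ this is exactly the condition $L^{n-k+1}\tau=0$ defining $P^k$.

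For part~(1), the degree-$k$ component of $V_\alpha$ is either zero or the line spanned by $L^{(k-d_\alpha)/2}w_\alpha$, and this is nonzero precisely when $d_\alpha\le k$ and $k-d_\alpha$ is even. Collecting the summands with $d_\alpha=k-2i$, their lowest-weight vectors span exactly $P^{k-2i}=\ker\bigl(\Lambda|_{\Lambda^{k-2i}W^*}\bigr)$ by the previous step, while $L^i$ maps this space injectively into $\Lambda^kW^*$ (no $L^jw_\alpha$ vanishes before the string ends), so their degree-$k$ components span $L^iP^{k-2i}$; since each $V_\alpha$ contributes to precisely one value of $i$, this yields the direct sum $\Lambda^kW^*=\bigoplus_{i\ge 0}L^iP^{k-2i}$. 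For part~(2), the same weight-string picture shows that whenever $V_\alpha$ has a (necessarily one-dimensional) degree-$k$ component, $L^{n-k}$ sends it isomorphically onto the degree-$(2n-k)$ component of $V_\alpha$; hence $L^{n-k}\colon\Lambda^kW^*\to\Lambda^{2n-k}W^*$ is injective, and since $\dim\Lambda^kW^*=\binom{2n}{k}=\binom{2n}{2n-k}=\dim\Lambda^{2n-k}W^*$ it is an isomorphism.

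The main obstacle --- in fact essentially the only point that is not formal --- is the verification of the commutator $[L,\Lambda]=H$ together with the index bookkeeping in the weight-string decomposition: pinning down the sign in the definition of $\Lambda$ and keeping track of the admissible ranges of the exponents. Everything else is a direct appeal to the classification of finite-dimensional $\mathfrak{sl}_2$-representations, which I would simply cite (as the paper already does for the result itself).
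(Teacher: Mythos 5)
Your argument is correct: the $\mathfrak{sl}_2$-triple $(L,\Lambda,H)$, the identification of primitive forms with lowest-weight vectors via the equivalence $\Lambda\tau=0\Leftrightarrow L^{m+1}\tau=0$ for weight $-m$, and the weight-string bookkeeping all check out (including the reduction of $[L,\Lambda]=H$ to the rank-one case, which works since $L_i$ and $\Lambda_i$ are even operators). The paper does not prove this proposition but cites it from Huybrechts (Proposition 1.2.30), and your proof is precisely the standard $\mathfrak{sl}_2$-representation-theoretic argument given there, so there is nothing to add.
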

		Let $(M,\omega_s)$ be a symplectic manifold of dimension $2n$. Proposition \ref{proposition_Lefschetz-decomposition} implies that the Lefschetz operator $L:\Omega^{n-1}(M)\rightarrow\Omega^{n+1}(M)$ is an isomorphism.
		\begin{definition}
			For a symplectic manifold $(M,\omega_s)$ we define
			\begin{align*}
				&\bar{d}:\Omega^{n}(M)\rightarrow\Omega^{n-1}(M), &&\bar{d}\tau:=L^{-1}d\tau\\
				&\D:\Omega^n(M)\rightarrow\Omega^n(M), &&\D\tau:=d\bar{d}\tau=dL^{-1}d\tau
			\end{align*} 
			and call $\D$ the \emph{symplectic Rumin operator}.
		\end{definition}
		Note that $\bar{d}$ is a first order differential operator, while $\D$ is of second order.
		\begin{proposition}
			\label{proposition_properties_tilde_D_d}
			$\D$ and $\bar{d}$ have the following properties:
			\begin{enumerate}
				\item $\D\tau$ is primitive for all $\tau\in \Omega^n(M)$. 
				\item $\D$ vanishes on multiples of $\omega_s$.
				\item $\bar{d}$ and $\D$ vanish on closed forms.
				\item If $\phi:M\rightarrow M$ is a symplectomorphism, then $\bar{d}$ and $\phi^*$ commute. The same holds for $\D$.
			\end{enumerate}
		\end{proposition}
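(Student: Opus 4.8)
The four claims are essentially formal consequences of the Lefschetz decomposition (Proposition \ref{proposition_Lefschetz-decomposition}) together with the fact that $d$ commutes with the Lefschetz operator $L$ up to nothing (since $d\omega_s = 0$, we have $d\circ L = L\circ d$ on all forms). I will treat the claims in the order (3), (2), (1), (4), since the later ones build on the earlier.

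For (3): if $\tau$ is closed then $d\tau = 0$, hence $\bar{d}\tau = L^{-1}d\tau = L^{-1}0 = 0$ and therefore $\D\tau = d\bar{d}\tau = 0$ as well. For (2): a multiple of $\omega_s$ in degree $n$ has the form $\omega_s\wedge\sigma = L\sigma$ for some $\sigma\in\Omega^{n-2}(M)$; then $d(L\sigma) = L(d\sigma)$ because $d\omega_s=0$, so $\bar{d}(L\sigma) = L^{-1}L(d\sigma) = d\sigma$, which is closed, and hence $\D(L\sigma) = d(d\sigma) = 0$. (Strictly one should note $L\colon\Omega^{n-2}\to\Omega^n$ need not be surjective, but every element of the image, i.e.\ every multiple of $\omega_s$, is of this form, which is all that is claimed.) For (1): by definition $\D\tau = d\bar{d}\tau$ with $\bar{d}\tau\in\Omega^{n-1}(M)$, so $L(\bar d\tau) = d\tau$ and thus $L(\D\tau) = L d\bar d\tau = d(L\bar d\tau) = d(d\tau) = 0$; since the condition for an $n$-form to be primitive is exactly that it is annihilated by $L^{n-n+1} = L$, this shows $\D\tau$ is primitive. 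For (4): if $\phi$ is a symplectomorphism then $\phi^*\omega_s = \omega_s$, so $\phi^*$ commutes with $L$, hence with $L^{-1}$ on the degrees where the latter is defined, and $\phi^*$ always commutes with $d$; therefore $\phi^*\bar d\tau = \phi^* L^{-1}d\tau = L^{-1}d\phi^*\tau = \bar d(\phi^*\tau)$, and composing with $d$ gives the corresponding statement for $\D$.

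The only genuinely substantive point — and the one I would be most careful about — is the well-definedness implicit throughout: that $L\colon\Omega^{n-1}(M)\to\Omega^{n+1}(M)$ is an isomorphism so that $L^{-1}$ makes sense on $d\tau\in\Omega^{n+1}(M)$ for $\tau\in\Omega^n(M)$, and that in step (2) the intermediate object $L^{-1}L(d\sigma)$ is literally $d\sigma$. Both follow from part (2) of Proposition \ref{proposition_Lefschetz-decomposition} applied fiberwise ($L^{n-k}\colon\Lambda^k W^*\to\Lambda^{2n-k}W^*$ is an isomorphism, specialized to $k=n-1$), which has already been recorded in the excerpt, so there is no real obstacle; the proof is a short bookkeeping argument once one keeps track of which degrees each operator acts in. I would present it in roughly four or five lines, one per claim, exactly as sketched above.
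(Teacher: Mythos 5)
Your proposal is correct and follows essentially the same route as the paper: all four claims are derived from the Lefschetz isomorphism $L\colon\Omega^{n-1}(M)\rightarrow\Omega^{n+1}(M)$ together with $d\omega_s=0$, with the same computations (e.g.\ $\omega_s\wedge\D\tau=d(\omega_s\wedge L^{-1}d\tau)=d^2\tau=0$ for primitivity, and the commutation of $\phi^*$ with $L$, hence $L^{-1}$, for the symplectomorphism statement). No gaps.
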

		\begin{proof}
			\begin{enumerate}
				\item As $\D\tau$ has degree $n$, we only need to show that $\omega_s\wedge \D\tau=0$ due to Proposition \ref{proposition_Lefschetz-decomposition}. This follows from $\omega_s\wedge \D\tau=\omega_s\wedge dL^{-1}d\tau=d(\omega_s\wedge L^{-1}d\tau)=d^2\tau=0$, as $\omega_s$ is closed.
				\item If $\tau=\omega_s\wedge\xi$, then $\D\tau=dL^{-1}d(\omega_s\wedge\xi)=dL^{-1}(\omega_s\wedge d\xi)=d(d\xi)=0$.
				\item Trivial.
				\item We calculate  $\omega_s\wedge\bar{d}(\phi^*\tau)=d(\phi^*\tau)=\phi^*d\tau=\phi^*(\omega_s\wedge\bar{d}\tau)=\omega_s\wedge \phi^*\bar{d}\tau$. By dividing by $\omega_s$, we obtain $\bar{d}(\phi^*\tau)=\phi^*\bar{d}\tau$. $\D(\phi^*\tau)=\phi^*\D\tau$ follows by applying $d$ to both sides.
			\end{enumerate}
		\end{proof}
		For a Euclidean vector space $V$ consider the symplectic vector space $V\times V$ with symplectic form $\omega_s((v_1,v_2),(w_1,w_2):=\langle v_1,w_2\rangle -\langle v_2,w_1\rangle$. An isotropic subspace $W\subset V\times  V$ is called strictly positive if there exists $k$ orthogonal vectors $u_1,\dots,u_k\in V$ such that $W$ is spanned by the vectors $w_i:=(u_i,\lambda_i u_i)$ where $\lambda_i>0$ for all $1\le i\le k$.
		We will need the following lemma, which is due to Bernig. It is an easy generalization of \cite{Bernig_Broecker:Valuations_manifolds_Rumin_cohomology} Lemma 1.4.
		\begin{lemma}
			\label{lemma_positive_isotropic_subspace}
			If $\tau\in\Omega^k(V\times V)$ vanishes on all strictly positive isotropic $k$-dimensional subspaces, then $\tau$ is a multiple of the symplectic form.
		\end{lemma}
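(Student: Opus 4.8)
The plan is to reduce the statement to a pointwise assertion of linear algebra on the symplectic vector space $W:=V\times V$ and then to read it off from the spanning vectors of a strictly positive isotropic subspace by a polynomial argument, in the spirit of \cite{Bernig_Broecker:Valuations_manifolds_Rumin_cohomology} Lemma 1.4. First a few reductions. If $k>n$ there are no $k$-dimensional isotropic subspaces, so the hypothesis is empty, while it follows from the Lefschetz decomposition that in this range every element of $\Lambda^kW^*$ already lies in $\omega_s\wedge\Lambda^{k-2}W^*$; so assume $1\le k\le n$. Fix an orthonormal basis $e_1,\dots,e_n$ of $V$, giving Darboux coordinates $(x_i,y_i)$ on $W$ with $\omega_s=\sum_i dx_i\wedge dy_i$, where $x_i$ reads off the first and $y_i$ the second factor. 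By Proposition \ref{proposition_Lefschetz-decomposition} we have $\Lambda^kW^*=P^k\oplus(\omega_s\wedge\Lambda^{k-2}W^*)$, so it is enough to show that the primitive component $\tau_0\in P^k$ of $\tau$ vanishes; and since $\omega_s$ restricts to zero on every isotropic subspace, $\tau$ and $\tau_0$ agree on all isotropic subspaces, so $\tau_0$ vanishes on all strictly positive isotropic $k$-dimensional subspaces as well. From now on $\tau$ denotes this primitive form.

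Next the polynomial argument. Diagonalising the positive definite symmetric operator involved, a strictly positive isotropic $k$-dimensional subspace is spanned by vectors $w_i=(u_i,\lambda_i u_i)\in W$, $i=1,\dots,k$, with $u_1,\dots,u_k$ an orthonormal system in $V$ and $\lambda_1,\dots,\lambda_k>0$. Writing $w_i=(u_i,0)+\lambda_i(0,u_i)$ and expanding $\tau(w_1,\dots,w_k)$ by multilinearity yields a polynomial in $(\lambda_1,\dots,\lambda_k)$ that vanishes on $(0,\infty)^k$, hence identically; its coefficient in front of $\prod_{i\in S}\lambda_i$ is $\tau(z_1,\dots,z_k)$, where $z_i=(0,u_i)$ for $i\in S$ and $z_i=(u_i,0)$ for $i\notin S$. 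Therefore $\tau$ vanishes on every \emph{mixed frame}: on every $k$-tuple whose $i$-th entry is the horizontal lift $(u_i,0)$ or the vertical lift $(0,u_i)$ of $u_i$, the $u_i$ ranging over orthonormal systems in $V$. Equivalently, $\tau$ restricts to zero on every subspace $(U'\times 0)\oplus(0\times U'')\subseteq W$ coming from an orthogonal decomposition $U=U'\oplus U''$ of a $k$-dimensional subspace $U\subseteq V$.

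Finally I conclude $\tau=0$. Applying the previous step to mixed frames built from the coordinate vectors $e_i$ shows that the coefficient of $\tau$ in front of $dx_I\wedge dy_J$ vanishes whenever $I\cap J=\emptyset$ (in particular $\tau|_{V\times 0}=0$ and $\tau|_{0\times V}=0$). For the remaining components one tests the mixed frame condition against orthonormal systems adapted to vectors of the form $(e_i\pm e_j)/\sqrt2$; comparing coefficients gives, exactly as in the proof of \cite{Bernig_Broecker:Valuations_manifolds_Rumin_cohomology} Lemma 1.4, that the coefficient of $dx_m\wedge dy_m\wedge(\text{remaining factors})$ is independent of the diagonal index $m$, so that $\tau$ lies in $\omega_s\wedge\Lambda^{k-2}W^*$. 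Being also primitive, $\tau\in P^k\cap(\omega_s\wedge\Lambda^{k-2}W^*)=0$ by Proposition \ref{proposition_Lefschetz-decomposition}. If $\tau$ is a differential form rather than an element of $\Lambda^kW^*$, the same reasoning at each point gives $\tau=\omega_s\wedge\eta$ with $\eta$ smooth, because $L$ is a bundle morphism of locally constant rank.

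The step I expect to be the main obstacle is the index computation in the last paragraph. Bernig and Bröcker perform it only for the bidegree occurring for the classical Rumin operator; passing to arbitrary $k$ and arbitrary bidegree $(|I|,|J|)$ introduces no new idea — one still uses only the vanishing on coordinate mixed frames and on those adapted to the vectors $(e_i\pm e_j)/\sqrt2$ — but the bookkeeping needed to see that the surviving components assemble into a form of the shape $\omega_s\wedge\eta$ is somewhat involved.
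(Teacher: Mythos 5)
The paper itself gives no proof of this lemma -- it is attributed to Bernig as an ``easy generalization'' of \cite{Bernig_Broecker:Valuations_manifolds_Rumin_cohomology}, Lemma 1.4 -- so your proposal can only be measured against a complete argument. Your first two steps are correct and are the standard ones: splitting off the primitive component via Proposition \ref{proposition_Lefschetz-decomposition} is legitimate because $\omega_s\wedge\Lambda^{k-2}(V\times V)^*$ restricts to zero on isotropic planes, and expanding $\tau(w_1,\dots,w_k)$ as a polynomial in $(\lambda_1,\dots,\lambda_k)$ does show that $\tau$ vanishes on every ``mixed frame'' obtained by lifting an orthonormal system horizontally or vertically.

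The gap is the final step, and it is more than deferred bookkeeping: the statement you want to extract from the frames adapted to $(e_i\pm e_j)/\sqrt{2}$ -- that the coefficient of $dx_m\wedge dy_m\wedge(\text{remaining factors})$ is independent of $m$ -- is not a consequence of the mixed-frame condition at all. For example, $\omega_s\wedge dx_1\wedge dy_1$ vanishes on every mixed frame (it lies in the ideal generated by $\omega_s$), yet for the fixed remaining factor $dx_2\wedge dy_2$ the coefficient of $dx_j\wedge dy_j\wedge dx_2\wedge dy_2$ equals $1$ for $j=1$ and $0$ for $j\ge 3$. Since your derivation uses only the mixed-frame vanishing (primitivity enters only at the very end), it cannot yield that independence; the monomial-by-monomial formulation breaks down as soon as a monomial contains two or more diagonal pairs $dx_m\wedge dy_m$, which happens for every $k\ge 4$ and already in the Lagrangian case treated in \cite{Bernig_Broecker:Valuations_manifolds_Rumin_cohomology}. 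What actually has to be shown is that mixed-frame vanishing forces the primitive component to vanish -- equivalently, in each bidegree $(p,q)$, that the decomposable tensors $u_1\wedge\dots\wedge u_p\otimes v_1\wedge\dots\wedge v_q$ with $\{u_i\}\cup\{v_j\}$ orthonormal span the kernel of the contraction with $\omega_s$ in $\Lambda^pV\otimes\Lambda^qV$. That requires a genuine argument (for instance an induction on the number of indices common to the $dx$- and $dy$-parts, carried out modulo the ideal generated by $\omega_s$, or a representation-theoretic spanning argument), and it is precisely the step you flag as unresolved. As written, the proposal establishes the easy half and the right strategy, but not the lemma.
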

		In addition, recall that the natural $1$-form $\alpha\in \Omega^1(T^*V)$ is defined by $\alpha|_{x,y}=\langle y,d\pi|_x\cdot\rangle$ for $(x,y)\in T^*V$. If we choose linear coordinates $(x_1,\dots,x_n)$ on $V$ with induced coordinates $(y_1,\dots,y_n)$ on $V^*$, this form is given by $\alpha=\sum_{i=1}^{n}y_idx_i$. In particular, $-d\alpha=\omega_s$.
		\begin{proof}[Proof of Theorem \ref{maintheorem_kernel_theorem}]
			Let $\mu:=D(\cdot)[\tau]$ be the valuation induced by $\tau$.\\
			Let us start by showing that $\D\tau=0$ and $\int_V\tau=0$ imply $\mu=0$. By continuity (see Corollary \ref{corollary_Continuity_valuations_defined_by_differential_cycle}), it is enough to show $\mu(f)=0$ for all $f\in\Conv(V,\R)\cap C^\infty(V)$. Set $\xi:=\bar{d}\tau$, i.e. $\omega_s\wedge\xi=d\tau$. As $\omega_s=-d\alpha$ for the natural $1$-form on $T^*V$, $d(-\alpha\wedge\xi)=\omega_s\wedge\xi+\alpha\wedge d\xi=d\tau+\alpha\wedge \D\tau$. Thus $\D\tau=0$ implies $d(\tau+\alpha\wedge \xi)=0$.\\
			
			Consider the valuation $f\mapsto D(f)[\alpha\wedge\xi]$. As $f$ is smooth, it is easy to see that $D(f)\llcorner\alpha=D(f)\llcorner \pi^*df$, so
			\begin{align*}
			D(f)[\alpha\wedge\xi]=&D(f)[d(\pi^*f)\wedge \xi]=D(f)[d(\pi^*f\wedge \xi)-\pi^*f\wedge d\xi]\\
			=&-D(f)[\pi^*f\wedge \D\tau]=0.
			\end{align*}
			Here we have used that $D(f)$ is closed. Now observe that $D(f)$ and $[V\times\{0\}]=D(0)$ belong to the same homology class. Thus $d(\tau+\alpha\wedge\xi)=0$ and $D(f)[\alpha\wedge\xi]=0$ imply
			\begin{align*}
			D(f)[\tau]=D(f)[\tau+\alpha\wedge\xi]=D(0)[\tau+\alpha\wedge\xi]=\int_V(\tau+\alpha\wedge\xi)=\int_V\tau=0,
			\end{align*}
			as $\alpha|_V=0$. Thus $\mu(f)=0$.\\
			
			Now let us assume that $\mu=0$. Because $\int_V\tau=D(0)[\tau]=\mu(0)=0$, the second condition follows directly.\\
			Let $f$ be a smooth convex function such that $f-\lambda|\cdot|^2$ is convex for some $\lambda>0$. Then $D(f)$ is given by integration over the graph of $df$ and for $g\in C^\infty_c(V)$ there exists $\epsilon>0$ such that the function $f+tg$ is convex for $t\in (-\epsilon,\epsilon)$. Proposition \ref{proposition_Fu_sum_MA_andC11} thus shows that
			\begin{align*}
			0=D(f+tg)[\tau]=\Phi_{tg*}D(f)[\tau]=D(f)[\Phi_{tg}^*\tau]\quad \text{for }t\in(-\epsilon,\epsilon),
			\end{align*}
			where $\Phi_{tg}:T^*V\rightarrow T^*V$, $\Phi_{tg}(x,y)=(x,y+tdg(x))$. Differentiating, we obtain
			\begin{align*}
			0=D(f)[\mathcal{L}_{X_g}\tau]=D(f)[(d\circ i_{X_g}+i_{X_g}\circ	 d)\tau]=D(f)[i_{X_g}d\tau],
			\end{align*}
			as $D(f)$ is closed. Here $X_g:= \frac{d}{dt}\big|_0\Phi_{tg}$. Using $d\tau=\omega_s\wedge \bar{d}\tau$, 
			\begin{align*}
			0=D(f)[i_{X_g}(\omega_s\wedge \bar{d}\tau)]=D(f)[\omega_s\wedge i_{X_g}\bar{d}\tau+i_{X_g}\omega_s\wedge \bar{d}\tau].
			\end{align*}
			As $D(f)$ is Lagrangian, the first term vanishes, so we are left with
			\begin{align*}
			0=D(f)[i_{X_g}\omega_s\wedge \bar{d}\tau].
			\end{align*}
			The map $\Phi_{tg}$ is a symplectomorphism and it is easy to see that $\Phi_{tg}^*\alpha=\alpha+t\pi^*dg$. Differentiating, we obtain $\pi^*dg=\mathcal{L}_{X_g}\alpha=i_{X_g}d\alpha+di_{X_g}\alpha=-i_{X_g}\omega_s$. Here we have used that $d\pi (X_g)=0$, i.e. $i_{X_g}\alpha=0$, as $\Phi_{tg}$ maps each fiber to itself.	In particular,
			\begin{align*}
			0=D(f)[d(\pi^*g)\wedge\bar{d}\tau].
			\end{align*}
			Using once again that $D(f)$ is closed, we arrive at 
			\begin{align*}
			0=D(f)[\pi^*g\wedge d\bar{d}\tau]=D(f)[\pi^*g\wedge \D\tau].
			\end{align*}
			As this is true for all $g\in C^\infty_c(V)$, $\D\tau$ vanishes on all spaces tangent to the graph of $df$.\\
			We are now going to apply Lemma \ref{lemma_positive_isotropic_subspace}: Fix a Euclidean structure on $V$ and use the induced isomorphism $V\times V^*\cong V\times V$. It can be checked that this is a symplectomorphism. Fix a point $(x,y)\in T^*V$. We claim that the pullback of $\D\tau$ vanishes on all strictly positive isotropic $n$-dimensional subspaces at the corresponding point in $V\times V$. Given a strictly positive isotropic subspace $W$ of dimension $n$, we thus need to find a convex function $f\in\Conv(V,\R)\cap C^\infty(V)$ such that $f-\lambda|\cdot|^2$ is convex for some $\lambda>0$, $df(x)=y$ and such that the tangent space to the graph of $\nabla f$ is exactly $W$. By definition, there exist orthonormal vectors $u_1,\dots,u_n\in V$ and positive numbers $\lambda_1,\dots,\lambda_n$ such that $W$ is spanned by $w_i=(u_i,\lambda_i u_i)$. With respect to the basis $u_1,\dots,u_n$, we obtain linear coordinates $z_1,\dots, z_n$ on $V$, and we define $f\in\Conv(V,\R)\cap C^\infty(V)$ by
			\begin{align*}
			f(z):=\sum_{i=1}^{n}\frac{1}{2}\lambda_i z_i^2+(y_i-\lambda_ix_i) z_i,
			\end{align*}
			where $(y_1,\dots,y_n)$ are the coordinates  with respect to the basis $u_1,\dots,u_n$ of the image of $y\in V^*$ in $V$ under the isomorphism above. Then $f$ has the desired properties. Using that $\D\tau$ vanishes on all spaces tangent to the graph of $df$, we see that the pullback of $\D\tau$ to $V\times V$ vanishes on $W$. As this is true for all strictly positive isotropic $n$-dimensional subspaces and all $(x,y)\in T^*V\cong V\times V$, this pullback must be a multiple of the symplectic form due to Lemma \ref{lemma_positive_isotropic_subspace}. As $T^*V\cong V\times V$ are symplectomorphic, $\D\tau$ must be a multiple of the symplectic form on $T^*V$ as well. However, $\D\tau$ is primitive due to Proposition \ref{proposition_properties_tilde_D_d}, so the Lefschetz decomposition in Proposition \ref{proposition_Lefschetz-decomposition} implies $\D\tau=0$.
		\end{proof}
		
	\begin{corollary}
		\label{corollary_Dtau=0_euler_characteristic}
		If $\tau\in\Omega^n_{hc}(T^*V)$ satisfies $\D\tau=0$, then $D(f)[\tau]=\int_V\tau$ for all $f$.
	\end{corollary}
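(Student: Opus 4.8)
The plan is to deduce this corollary directly from Theorem \ref{maintheorem_kernel_theorem} rather than reproving anything. Suppose $\tau\in\Omega^n_{hc}(T^*V)$ satisfies $\D\tau=0$, and set $c:=\int_V\tau$, where again $V\hookrightarrow T^*V$ is the zero section. I want to produce a form $\tau_0\in\Omega^n_{hc}(T^*V)$ with the same symplectic Rumin operator as $\tau$ and with $\int_V\tau_0=0$, so that $\tau-\tau_0$ lies in the kernel described by Theorem \ref{maintheorem_kernel_theorem}. The natural candidate is to subtract a suitable multiple of a fixed reference form.

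Concretely, first I would exhibit one form $\sigma\in\Omega^n_{hc}(T^*V)$ with $\D\sigma=0$ and $\int_V\sigma=1$; for instance take $\sigma=\psi(x)\,\pi^*\vol$ for a compactly supported $\psi\in C^\infty_c(V)$ with $\int_V\psi\,d\vol=1$. Since $\pi^*\vol$ has the top form $dx_1\wedge\dots\wedge dx_n$ as a factor and is pulled back from the base, one checks $d\sigma$ is again a multiple of $\pi^*\vol$ wedged with a $dx_i$, which forces $d\sigma$ (being a form of bidegree with $n+1$ horizontal differentials) to vanish, hence $\bar d\sigma=0$ and $\D\sigma=0$; and $\int_V\sigma=\int_V\psi\,d\vol=1$ because $\alpha|_V=0$ plays no role here — the restriction of $\pi^*\vol$ to the zero section is just $\vol$. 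Alternatively, one can simply note that the valuation $f\mapsto D(f)[\sigma]$ equals $\int_V\psi(x)\,d\vol(x)=1$ for every $f$ by the defining property (4) of the differential cycle in Theorem \ref{theorem_characterization_differential_cycle}, so $\D\sigma=0$ follows a posteriori from the kernel theorem anyway, but the direct computation is cleaner.

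Now set $\tau_0:=\tau-c\,\sigma$. Then $\D\tau_0=\D\tau-c\,\D\sigma=0$ by linearity of $\D$, and $\int_V\tau_0=\int_V\tau-c\int_V\sigma=c-c=0$. Both hypotheses of Theorem \ref{maintheorem_kernel_theorem} are satisfied, so $D(f)[\tau_0]=0$ for all $f\in\Conv(V,\R)$. Therefore
\begin{align*}
D(f)[\tau]=D(f)[\tau_0]+c\,D(f)[\sigma]=0+c\cdot 1=c=\int_V\tau
\end{align*}
for all $f$, where $D(f)[\sigma]=1$ is exactly property (4) of Theorem \ref{theorem_characterization_differential_cycle} applied with $\phi$ supported near $\supp\psi$, or equivalently the already established fact that $D(0)[\sigma]=\int_V\sigma=1$ together with $\D\sigma=0$ and the corollary's own statement in the trivial case. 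This completes the proof.

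I do not anticipate a genuine obstacle here; the only point requiring a little care is the verification that the reference form $\sigma$ has vanishing symplectic Rumin operator, and I would handle that either by the bidegree/horizontal-degree argument sketched above (a form that is a multiple of $\pi^*\vol$ has every contraction with a vertical direction equal to zero, and its exterior derivative can only add horizontal differentials, of which there are already $n$, forcing $d\sigma=0$) or, more slickly, by invoking Theorem \ref{maintheorem_kernel_theorem} in reverse: $f\mapsto D(f)[\sigma]$ is visibly constant by property (4) of Theorem \ref{theorem_characterization_differential_cycle}, hence $f\mapsto D(f)[\sigma-\sigma]=0$... which is circular, so I would use the honest computation. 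Either way the argument is short.
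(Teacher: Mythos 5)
Your proof is correct and is essentially the paper's own argument: the paper likewise subtracts a pullback form $\pi^*(\phi\wedge\vol)$ with $\int_V\phi\,d\vol=\int_V\tau$ (closed, hence $\D$-annihilated), applies Theorem \ref{maintheorem_kernel_theorem} to conclude the two forms induce the same valuation, and evaluates via property (4) of Theorem \ref{theorem_characterization_differential_cycle}; your normalization $\int_V\psi\,d\vol=1$ with the scaling by $c$ is only a cosmetic variant.
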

	\begin{proof}
		Choose $\phi\in C^\infty_c(V)$ such that $\int_V\tau=\int_V\phi(x) d\vol(x)$. Then $\D(\pi^*(\phi\wedge\vol))=0$, as $d\pi^*(\phi\wedge\vol)=0$. By definition $\int_V\left(\tau-\pi^*(\phi\wedge\vol)\right)=0$, so the valuations induced by $\tau$ and $\pi^*(\phi\wedge\vol)$ have to coincide by Theorem \ref{maintheorem_kernel_theorem}. But $D(f)[\pi^*(\phi\wedge\vol)]=\int_V\phi(x)d\vol(x)=\int_V\tau$ by the defining properties of the differential cycle.
	\end{proof}

\section{Invariant smooth valuations}
	\label{section:invariant_smooth_valuations}
	\subsection{Representation of dually epi-translation invariant valuations by invariant forms}
	\label{section:representation_smooth_dually_epi}
	We will call a differential form on $T^*V$ vertically translation invariant if it is invariant with respect to translations in the second component of $T^*V=V\times V^*$.	
	\begin{corollary}
		\label{corollary:smooth_dually_Dtau}
		A differential form $\tau$ represents a dually epi-translation invariant valuation $\mu$ if and only if $\D\tau$ is vertically translation invariant and $\int_V\phi_\lambda^*\tau=\int_V\tau$ for all $\lambda\in V^*$, where $\phi_\lambda:T^*V\rightarrow T^*V$, $\phi(x,y)=(x,y+\lambda)$.
	\end{corollary}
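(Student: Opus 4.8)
The plan is to reduce the statement to Theorem~\ref{maintheorem_kernel_theorem} applied to the difference forms $\phi_\lambda^*\tau-\tau$. First I would observe that the valuation $\mu=D(\cdot)[\tau]$ is automatically invariant under the addition of constants: if $c\in\R$ then $d(f+c)=df$, so $D(f+c)=D(f)$ by the defining properties in Theorem~\ref{theorem_characterization_differential_cycle}, hence $\mu(f+c)=\mu(f)$. Therefore $\mu$ is dually epi-translation invariant if and only if $\mu(f+\lambda)=\mu(f)$ for all $f\in\Conv(V,\R)$ and all $\lambda\in V^*$. Since a linear functional $\lambda$ satisfies $d\lambda\equiv\lambda$, Proposition~\ref{proposition_Fu_sum_MA_andC11} gives $D(f+\lambda)=\phi_{\lambda*}D(f)$ (the map $G_\lambda$ there is precisely $\phi_\lambda$), whence
\begin{align*}
\mu(f+\lambda)=D(f+\lambda)[\tau]=\big(\phi_{\lambda*}D(f)\big)[\tau]=D(f)[\phi_\lambda^*\tau].
\end{align*}
Thus $\mu$ is dually epi-translation invariant exactly when the form $\phi_\lambda^*\tau-\tau$ induces the trivial valuation on $\Conv(V,\R)$ for every $\lambda\in V^*$.

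Next I would check that $\phi_\lambda^*\tau-\tau$ lies in $\Omega^n_{hc}(T^*V)$, so that Theorem~\ref{maintheorem_kernel_theorem} applies. Since $\pi\circ\phi_\lambda=\pi$, the map $\phi_\lambda$ preserves the fibers of $\pi$, so if $\supp\tau\subset\pi^{-1}(K)$ for a compact $K\subset V$ then also $\supp(\phi_\lambda^*\tau)=\phi_{-\lambda}(\supp\tau)\subset\pi^{-1}(K)$, and hence the difference form has horizontally compact support. Theorem~\ref{maintheorem_kernel_theorem} then says that $\phi_\lambda^*\tau-\tau$ induces the trivial valuation if and only if $\D(\phi_\lambda^*\tau-\tau)=0$ and $\int_V(\phi_\lambda^*\tau-\tau)=0$.

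Finally I would use that $\phi_\lambda$ is a symplectomorphism of $(T^*V,\omega_s)$: in linear coordinates $\omega_s=-d\alpha$ is manifestly invariant under $(x,y)\mapsto(x,y+\lambda)$ because $\lambda$ is constant. By Proposition~\ref{proposition_properties_tilde_D_d}(4), $\D$ commutes with $\phi_\lambda^*$, so $\D(\phi_\lambda^*\tau-\tau)=\phi_\lambda^*\D\tau-\D\tau$; requiring this to vanish for all $\lambda\in V^*$ is exactly the condition that $\D\tau$ be invariant under all fiber translations, i.e.\ vertically translation invariant. The condition $\int_V(\phi_\lambda^*\tau-\tau)=0$ for all $\lambda$ is literally the remaining hypothesis $\int_V\phi_\lambda^*\tau=\int_V\tau$. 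Combining the two equivalences yields the corollary. There is no genuine obstacle here: the substantive work is already carried out in Theorem~\ref{maintheorem_kernel_theorem}, and the only points needing a line of care are the reduction of dually epi-translation invariance to invariance under linear functionals, the identification of $G_\lambda$ with $\phi_\lambda$, and the verification that $\phi_\lambda$ preserves both $\omega_s$ and horizontally compact support.
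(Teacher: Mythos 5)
Your proposal is correct and follows essentially the same route as the paper: reduce dually epi-translation invariance to invariance under adding linear functionals, use Proposition~\ref{proposition_Fu_sum_MA_andC11} to rewrite $\mu(f+\lambda)=D(f)[\phi_\lambda^*\tau]$, and apply Theorem~\ref{maintheorem_kernel_theorem} together with the fact that $\phi_\lambda$ is a symplectomorphism (so $\D$ commutes with $\phi_\lambda^*$ by Proposition~\ref{proposition_properties_tilde_D_d}). The only cosmetic difference is that you phrase the kernel theorem as applying to the difference form $\phi_\lambda^*\tau-\tau$ and explicitly verify its horizontally compact support, which the paper leaves implicit.
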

	\begin{proof}
		By Proposition \ref{proposition_Fu_sum_MA_andC11} $D(f+\lambda)[\tau]=\phi_{\lambda*}D(f)[\tau]=D(f)[\phi_\lambda^*\tau]$ for all $\lambda\in V^*$. If $\mu$ is dually epi-translation invariant, this implies that $\tau$ and $\phi_{\lambda}^*\tau$ induce the same valuation $\mu$. Theorem \ref{maintheorem_kernel_theorem} shows $\D(\tau-\phi_\lambda^*\tau)=0$ and $\int_V\phi_\lambda^*\tau=\int_V\tau$. But it is easy to see that $\phi_\lambda$ is a symplectomorphism, so $\D\tau=\phi_\lambda^*\D\tau$ by Proposition \ref{proposition_properties_tilde_D_d} for all $\lambda\in V^*$, i.e. $\D\tau$ is vertically translation invariant.\\
		Now if $\D\tau$ is vertically translation invariant, then $\D\tau=\phi_\lambda^*\D\tau=\D(\phi_\lambda^*\tau)$ for all $\lambda\in V^*$ as $\phi_\lambda$ is a symplectomorphism. Together with the second property, Theorem \ref{maintheorem_kernel_theorem} implies that $\tau$ and $\phi_\lambda^*\tau$ induce the same valuation, so
		\begin{align*}
			D(f)[\tau]=D(f)[\phi_\lambda^*\tau]=(\phi_\lambda)_*D(f)[\tau]=D(f+\lambda)[\tau]\quad\forall f\in\Conv(V,\R),\forall \lambda\in V^*
		\end{align*}
		by Proposition \ref{proposition_Fu_sum_MA_andC11}. Of course, any valuation obtained from the differential cycle is invariant under the addition of constants, so $\tau$ induces a dually epi-translation invariant valuation.
	\end{proof}
	Consider the map $m_t:T^*V\rightarrow T^*V$, $(x,y)\mapsto (x,ty)$ for $t>0$. We will call a differential form $\tau$ on $T^*V$ homogeneous of degree $k\in \R$ if $m_t^*\tau=t^k\tau$ for all $t>0$.
	\begin{corollary}
		\label{corollary_homogeneous_valuations_and_D}
		For $k\ge 0$, $\tau$ represents a $k$-homogeneous valuation $\mu$ if and only if
		\begin{enumerate}
			\item $\D\tau$ is $(k-1)$-homogeneous and $\int_Vm_t^*\tau=0$ for all $t>0$ if $k\ne 0$,
			\item $\D\tau=0$ if $k=0$. 
		\end{enumerate}
		In particular, $\tau$ induces a constant valuation if and only if $\D\tau=0$.
	\end{corollary}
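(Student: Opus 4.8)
The plan is to exploit the behaviour of the differential cycle under the fibrewise scalings $m_t(x,y)=(x,ty)$ together with the kernel theorem (Theorem \ref{maintheorem_kernel_theorem}). Identifying $m_t$ (for $t>0$) with the maps $C$ of \eqref{equation_multiples_MA_functions}, one has $D(tf)[\tau]=D(f)[m_t^*\tau]$, so the whole question is how $m_t^*$ interacts with $\D$ and with $\int_V$. The one computation not covered by the earlier results is the behaviour of $\D$ under $m_t$: in linear coordinates $\alpha=\sum_i y_i\,dx_i$, hence $m_t^*\alpha=t\alpha$ and $m_t^*\omega_s=t\omega_s$, so $m_t^*(\omega_s\wedge\eta)=tL(m_t^*\eta)$, i.e. $m_t^*L=tLm_t^*$. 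Combining this with $m_t^*d=dm_t^*$ and the identity $m_t^*(d\tau)=m_t^*\bigl(L(L^{-1}d\tau)\bigr)=tL\bigl(m_t^*(L^{-1}d\tau)\bigr)$, one gets $m_t^*(L^{-1}d\tau)=t^{-1}L^{-1}d(m_t^*\tau)$, and applying $d$ yields the scaling identity
\[
\D(m_t^*\tau)=t\,m_t^*(\D\tau)\qquad(t>0),
\]
the conformal counterpart of the naturality of $\D$ under symplectomorphisms from Proposition \ref{proposition_properties_tilde_D_d}.

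Next I would translate $k$-homogeneity of $\mu:=D(\cdot)[\tau]$ into a statement about $\tau$. For fixed $t>0$, the identity $\mu(tf)=t^k\mu(f)$ for all $f$ is equivalent to $D(f)[m_t^*\tau-t^k\tau]=0$ for all $f\in\Conv(V,\R)$, and $m_t^*\tau-t^k\tau\in\Omega^n_{hc}(T^*V)$ because $\pi\circ m_t=\pi$. The case $t=0$ of the definition of $k$-homogeneity is automatic once the case $t>0$ is known, by continuity of $\mu$ (Corollary \ref{corollary_Continuity_valuations_defined_by_differential_cycle}). Hence $\mu$ is $k$-homogeneous if and only if $D(f)[m_t^*\tau-t^k\tau]=0$ for all $f$ and all $t>0$, and by Theorem \ref{maintheorem_kernel_theorem} this holds if and only if $\D(m_t^*\tau-t^k\tau)=0$ and $\int_V(m_t^*\tau-t^k\tau)=0$ for every $t>0$.

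It remains to unwind these two conditions. By the scaling identity, $\D(m_t^*\tau-t^k\tau)=tm_t^*(\D\tau)-t^k\D\tau$, which vanishes for all $t>0$ exactly when $m_t^*(\D\tau)=t^{k-1}\D\tau$ for all $t>0$, i.e. when $\D\tau$ is $(k-1)$-homogeneous. For the integral condition, $m_t$ fixes the zero section pointwise, so $\int_Vm_t^*\tau=\int_V\tau$; thus for $k\neq0$ the condition $\int_V(m_t^*\tau-t^k\tau)=0$ is equivalent to $\int_V\tau=0$, that is, to $\int_Vm_t^*\tau=0$ for all $t>0$, while for $k=0$ it is vacuous. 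This settles the case $k\neq0$. For $k=0$ one still has to see that $m_t^*(\D\tau)=t^{-1}\D\tau$ for all $t>0$ forces $\D\tau=0$: writing $\D\tau$ in coordinates, a coefficient $b$ in front of $dx_I\wedge dy_J$ satisfies $b(x,ty)=t^{-1-|J|}b(x,y)$, which is incompatible with the smoothness of $\D\tau$ along the zero section of $T^*V$ unless $b\equiv0$. (Alternatively: a $0$-homogeneous valuation is constant, and conversely $\D\tau=0$ gives $D(f)[\tau]=\int_V\tau$ by Corollary \ref{corollary_Dtau=0_euler_characteristic}; either way the final assertion about constant valuations is precisely the case $k=0$, since the constant valuations are exactly the $0$-homogeneous ones.)

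The routine parts are the coordinate bookkeeping and the tracking of powers of $t$; the one genuinely new ingredient is the scaling identity $\D(m_t^*\tau)=t\,m_t^*(\D\tau)$, and the only mild subtlety is the $k=0$ case, where one must exclude $\D\tau$ being a nonzero form that is homogeneous of negative degree — which is impossible across the zero section of $T^*V$.
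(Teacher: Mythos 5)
Your proposal is correct and follows essentially the same route as the paper: the scaling identity $\D(m_t^*\tau)=t\,m_t^*\D\tau$ derived from $m_t^*\omega_s=t\omega_s$, the reduction of homogeneity to $D(f)[m_t^*\tau-t^k\tau]=0$ and an application of Theorem \ref{maintheorem_kernel_theorem}, and a limit/coefficient argument at $t\to 0$ to force $\D\tau=0$ when $k=0$. Your observation that $\int_V m_t^*\tau=\int_V\tau$ (since $m_t$ fixes the zero section) is a harmless streamlining of the paper's integral condition, and your treatment of the $k=0$ and constant-valuation cases matches the paper's use of Corollary \ref{corollary_Dtau=0_euler_characteristic} in substance.
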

	\begin{proof}
		Using Equation \eqref{equation_multiples_MA_functions}, $D(tf)=m_{t*}D(f)$.
		$m_t$ is not a symplectomorphism but $m_t^*\omega_s=t\omega_s$, i.e. we obtain a constant multiple of $\omega_s$. Then
		\begin{align*}
		d(m_t^*\tau)=m^*_td\tau=m^*_t(\omega_s\wedge\bar{d}\tau)=t\omega_s\wedge m_t^*\bar{d}\tau,
		\end{align*}
		and thus $\bar{d}(m_t^*\tau)=tm_t^*\xi=tm_t^*\bar{d}\tau$ and $\D(m^*_t\tau)=d(tm_t^*\bar{d}\tau)=tm_t^*\D\tau$.\\
		Let $\mu$ be $k$-homogeneous. Then
		\begin{align*}
		D(f)[m_t^*\tau]=\mu(tf)=t^k\mu(f)=D(f)[t^k\tau]\quad \text{for all }f\in\Conv(V,\R).
		\end{align*}
		Theorem \ref{maintheorem_kernel_theorem} implies $\D(m_t^*\tau-t^k\tau)=0$ and using the computation above, we obtain $tm_t^*\D\tau=t^k\D\tau$ for all $t>0$.\\
		If $k=0$, we obtain $\D\tau=0$ by considering the limit $t\rightarrow0$. If $k\ne0$, we can divide by $t$ to obtain $m_t^*\D\tau=t^{k-1}\D\tau$ for all $t>0$, i.e. $\D\tau$ is $(k-1)$-homogeneous. Obviously, $\int_Vm_t^*\tau=D(0)[m_t^*\tau]=m_{t*}D(0)[\tau]=D(t\cdot0)[\tau]=\mu(0)=0^k\cdot\mu(0)=0$ if $k>0$.\\
		Now assume that $\D\tau$ is $k-1$ homogeneous, $k\ne 0$, and $\int_Vm_t^*\tau=0$ for all $t>0$. With the same computation as before, we conclude that $\D(m^*_t\tau-t^k\tau)=0$ for all $t>0$. As $\int_Vm_t^*\tau=0$ by assumption, $m^*_t\tau$ and $t^k\tau$ induce the same valuation by Theorem \ref{maintheorem_kernel_theorem}, i.e. $\mu(tf)=t^k\mu(f)$ for all $t>0$.\\
		If $\D\tau=0$, then $\mu$ is constant by Corollary \ref{corollary_Dtau=0_euler_characteristic} and in particular $0$-homogeneous.
	\end{proof}
		Let us also make the following observations:
	\begin{lemma}
		\label{lemma_alternative_form_valuation_with_D}
		Let $\mu\in\VConv(V)$ be a smooth valuation induced by $\tau\in\Omega_{hc}^n(T^*V)$. For every function $\psi\in C^\infty(V)$ (not necessarily with compact support)
		\begin{align*}
		\frac{d}{dt}\Big|_0\mu(f+t\psi)=D(f)[\pi^*\psi\wedge \D\tau]\quad\forall f\in\Conv(V,\R).
		\end{align*}
		In particular, the left hand side of this equation defines a smooth valuation. If $\mu\in\VConv_k(V)$, $k\ne 0$, this implies that
		\begin{align*}
		D(f)[\tau]=\frac{1}{k}D(f)\left[\pi^*f\wedge \D\tau\right]
		\end{align*}
		for all $f\in\Conv(V,\R)\cap C^\infty(V)$. Moreover for $k=1$, there exists $\phi\in C^\infty_c(V)$ such that
		\begin{align*}
		D(f)[\tau]=\int_{V}f(x)\phi(x)d\vol(x)
		\end{align*}
		for all $f\in\Conv(V,\R)$.
	\end{lemma}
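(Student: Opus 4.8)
The plan is to derive everything from the single variational formula, whose proof is a word-for-word repetition of the Cartan-calculus bookkeeping in the proof of Theorem \ref{maintheorem_kernel_theorem}. Fix $\psi\in C^\infty(V)$. By Proposition \ref{proposition_Fu_sum_MA_andC11}, $D(f+t\psi)=\Phi_{t\psi*}D(f)$ with $\Phi_{t\psi}(x,y)=(x,y+t\,d\psi(x))$, so $\mu(f+t\psi)=D(f)[\Phi_{t\psi}^*\tau]$. Since $\Phi_{t\psi}$ preserves the fibres of $\pi$, the forms $\Phi_{t\psi}^*\tau$ all have support in the fixed compact-over-$V$ set $\pi^{-1}(K)$ with $K=\pi(\supp\tau)$, and $t\mapsto\Phi_{t\psi}^*\tau$ is a smooth curve in $\Omega^n_{hc}(T^*V)$ in the $C^0$-topology with $\frac{d}{dt}\big|_0\Phi_{t\psi}^*\tau=\mathcal{L}_{X_\psi}\tau$, $X_\psi:=\frac{d}{dt}\big|_0\Phi_{t\psi}$. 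Because $\supp D(f)\cap\pi^{-1}(K)$ is compact (local vertical boundedness) and $D(f)$ has finite mass on compacts, one may differentiate under the current; using Cartan's formula and $\partial D(f)=0$ this gives $\frac{d}{dt}\big|_0\mu(f+t\psi)=D(f)[\mathcal{L}_{X_\psi}\tau]=D(f)[i_{X_\psi}d\tau]$. Now repeat the manipulation from the proof of Theorem \ref{maintheorem_kernel_theorem}: write $d\tau=\omega_s\wedge\bar{d}\tau$, discard $\omega_s\wedge i_{X_\psi}\bar{d}\tau$ because $D(f)$ is Lagrangian, and use $i_{X_\psi}\omega_s=-d(\pi^*\psi)$ to rewrite the rest as
\begin{align*}
	-d(\pi^*\psi)\wedge\bar{d}\tau=-d(\pi^*\psi\wedge\bar{d}\tau)+\pi^*\psi\wedge\D\tau .
\end{align*}
Since $D(f)$ is closed, only the last term contributes, proving $\frac{d}{dt}\big|_0\mu(f+t\psi)=D(f)[\pi^*\psi\wedge\D\tau]$. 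As $\D\tau$ is again horizontally compactly supported ($L^{-1}$ and $d$ are local), $\pi^*\psi\wedge\D\tau\in\Omega^n_{hc}(T^*V)$, so the right-hand side is a smooth valuation; I would stress that this computation uses only that $D(f)$ is closed and Lagrangian, hence holds for every $f\in\Conv(V,\R)$ for which the left-hand side is meaningful (in particular whenever $f$ is strongly convex and $\psi$ compactly supported, or $\psi=cf$).

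For the homogeneous identity, take $\mu\in\VConv_k(V)$ with $k\ne0$ and apply the formula with $\psi=f$ for $f\in\Conv(V,\R)\cap C^\infty(V)$: then $f+tf=(1+t)f\in\Conv(V,\R)$ near $t=0$ and $\mu(f+tf)=(1+t)^k\mu(f)$, so the left-hand side equals $k\mu(f)=kD(f)[\tau]$, which compared with the right-hand side gives $D(f)[\tau]=\frac1kD(f)[\pi^*f\wedge\D\tau]$. For the case $k=1$, Corollary \ref{corollary_homogeneous_valuations_and_D} shows $\D\tau$ is $0$-homogeneous and Corollary \ref{corollary:smooth_dually_Dtau} shows it is vertically translation invariant; in linear coordinates a vertically translation invariant $n$-form on $T^*V=V\times V^*$ is a sum of terms $a_I(x)\,dx_{I_1}\wedge dy_{I_2}$, on which $m_t^*$ acts by $t^{|I_2|}$, so $0$-homogeneity forces $\D\tau=\pi^*(g\cdot\vol)$ for some $g\in C^\infty(V)$, and horizontal compactness forces $g\in C^\infty_c(V)$. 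Then by the previous step and property (4) of Theorem \ref{theorem_characterization_differential_cycle},
\begin{align*}
	D(f)[\tau]=D(f)[\pi^*f\wedge\D\tau]=D(f)\big[((fg)\circ\pi)\,\pi^*\vol\big]=\int_V f(x)g(x)\,d\vol(x)
\end{align*}
for all $f\in\Conv(V,\R)\cap C^\infty(V)$; both sides are continuous on $\Conv(V,\R)$ (the left by Corollary \ref{corollary_Continuity_valuations_defined_by_differential_cycle}, the right since $g$ has compact support) and smooth convex functions are dense, so the identity extends to all $f\in\Conv(V,\R)$ with $\phi:=g$.

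The only genuinely delicate point is the first step: one must justify differentiating $t\mapsto D(f)[\Phi_{t\psi}^*\tau]$ under the current and make sense of $\frac{d}{dt}\big|_0\mu(f+t\psi)$ even when $f+t\psi$ is not convex for $t\ne0$. This is resolved by the remark that $D(f)[\Phi_{t\psi}^*\tau]$ is defined and depends smoothly on $t$ for \emph{all} $t$ — the supports stay inside $\pi^{-1}(K)$ and $D(f)$ has locally finite mass — and coincides with $\mu(f+t\psi)$ whenever the latter is defined; so the difference-quotient argument above is legitimate. Everything else is the contact/symplectic Cartan calculus already carried out in Section \ref{section_kernel_theorem}.
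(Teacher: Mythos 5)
Your proposal is correct and follows essentially the same route as the paper: the same use of Proposition \ref{proposition_Fu_sum_MA_andC11} to write $\mu(f+t\psi)=D(f)[\Phi_{t\psi}^*\tau]$, the same Cartan-calculus manipulation with $d\tau=\omega_s\wedge\bar{d}\tau$, the Lagrangian and closedness properties of $D(f)$, and $i_{X_\psi}\omega_s=-\pi^*d\psi$, followed by the identical homogeneity argument with $\psi=f$ and the identification $\D\tau=\pi^*(\phi\vol)$ via Corollaries \ref{corollary:smooth_dually_Dtau} and \ref{corollary_homogeneous_valuations_and_D} plus a continuity/density extension for $k=1$. The extra care you take in justifying differentiation under the current and the well-definedness of the left-hand side only makes explicit what the paper leaves implicit.
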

	\begin{proof}
		First note that every smooth valuation naturally extends to a functional on all Monge-Amp\`ere functions, so the left hand side of the first equation is well defined for all $f\in\Conv(V,\R)$ by Proposition \ref{proposition_Fu_sum_MA_andC11}. Note that
		\begin{align*}
		\frac{d}{dt}\Big|_0\mu(f+t\psi)=\frac{d}{dt}\Big|_0D(f+t\psi)[\tau]=\frac{d}{dt}\Big|_0D(f)[G_{t\psi}^*\tau]
		\end{align*}
		for $G_{t\psi}(x,y):=(x,y+td\psi(x))$ by Proposition \ref{proposition_Fu_sum_MA_andC11}. Denoting $X_\psi:=\frac{d}{dt}|_0 G_{t\psi}$, we see that
		\begin{align*}
		\frac{d}{dt}\Big|_0\mu(f+t\psi)=&D(f)\left[\mathcal{L}_{X_\psi}\tau\right]=D(f)\left[i_{X_\psi}d\tau\right]=D(f)\left[i_{X_\psi}(\omega_s\wedge \bar{d}\tau)\right]\\
		=& D(f)\left[i_{X_\psi}\omega_s\wedge \bar{d}\tau\right]=-D(f)\left[d\pi^*\psi\wedge\bar{d}\tau\right]=D(f)\left[\pi^*\psi\wedge \D\tau\right].
		\end{align*}
		Here we have used that $i_{X_\psi}\omega_s=-\pi^*d\psi$ as in the proof of Theorem \ref{maintheorem_kernel_theorem}.\\

		Now assume that $\mu$ is $k$-homogeneous, $f\in\Conv(V,\R)\cap C^\infty(V)$. Applying the previous argument to $\psi=f$, we obtain
		\begin{align*}
			kD(f)[\tau]=\frac{d}{dt}\Big|_0(1+t)^kD(f)[\tau]=\frac{d}{dt}\Big|_0D(f+tf)[\tau]=D(f)[\pi^*f\wedge\D\tau].
		\end{align*}
		
		For $k=1$, $\D\tau$ is a vertically translation invariant form that is in addition homogeneous of degree $0$ according to Corollary \ref{corollary:smooth_dually_Dtau} and Corollary \ref{corollary_homogeneous_valuations_and_D}. Thus $\D\tau=\pi^*(\phi\vol)$ for some $\phi\in C^\infty_c(V)$, so
		\begin{align*}
			D(f)[\tau]=D(f)[\pi^*f\wedge\D\tau]=D(f)[\pi^*(f\cdot\phi\vol)]=\int_V f(x)\phi(x)d\vol(x)
		\end{align*}
		for all $f\in\Conv(V,\R)\cap C^\infty(V)$ due to the defining properties of the differential cycle. By continuity, this equation thus holds for all $f\in\Conv(V,\R)$.
	\end{proof}
	
	In the rest of this section, we will show that any dually epi-translation invariant valuation, which can be represented by some (not necessarily invariant) differential form, can actually be obtained from a vertically translation invariant differential form.\\
	Let $\Omega^{k,l}=\Omega_c^k(V)\otimes\Lambda^l(V^*)^*\subset\Omega_{hc}^{k+l}(T^*V)$ denote the space of differential forms of bidegree $(k,l)$ with horizontally compact support that are in addition vertically translation invariant. The next proposition describes the image of $\D:\Omega^{n-k,k}\rightarrow\Omega^{n-(k-1),k-1}$. Recall that the de Rham cohomology with compact support is given by 
	\begin{align*}
		H_c^k(\R^n)\cong\begin{cases}
		0 & k\ne n,\\
		\R & k=n.
		\end{cases}
	\end{align*} This isomorphism is realized by the map $[\tau]\mapsto \int_{\R^n}\tau$.
	\begin{proposition}
		\label{proposition_desription_image_symp_D}
		For $2\le k\le n$:
		\begin{align*}
			\mathrm{Im}(\D:\Omega^{n-k,k}\rightarrow\Omega^{n-(k-1),k-1})=\ker d\cap \ker L\cap \Omega^{n-(k-1),k-1}.
		\end{align*}
		For $k=1$:
		\begin{align*}
			&\mathrm{Im}(\D:\Omega^{n-1,1}\rightarrow\Omega^{n,0})\\
			=&\left\{\pi^*(\phi\wedge\vol):\phi\in C^\infty_c(V), \int_V\phi(x) d\vol(x)=\int_V\lambda(x)\phi(x) d\vol(x)=0\quad\forall \lambda\in V^*\right\}.
		\end{align*}
	\end{proposition}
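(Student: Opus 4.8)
The plan is to analyze the symplectic Rumin operator $\D$ on the complex of vertically translation invariant forms $(\Omega^{n-\bullet,\bullet},\bar d)$ and relate it to the (compactly supported) de Rham complex on $V$ via fiber integration, using the Lefschetz decomposition in Proposition \ref{proposition_Lefschetz-decomposition} to control primitive forms.

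First I would settle the inclusion $\mathrm{Im}(\D)\subseteq \ker d\cap\ker L$ for $2\le k\le n$. By Proposition \ref{proposition_properties_tilde_D_d}, $\D\tau$ is always primitive, and primitivity of an $(n-(k-1))$-degree form with $n-(k-1)<n$ (which holds since $k\ge 2$, so the total degree $n-(k-1)$ is $\le n-1$... wait, total degree is $n$) — more carefully: $\D\tau\in\Omega^{n-(k-1),k-1}$ has total degree $n$, and a primitive $n$-form on a $2n$-dimensional symplectic manifold satisfies $L(\D\tau)=0$ by definition of primitivity at top-minus-one degree, i.e. $\ker L$. Closedness $d\D\tau=0$ follows since $\D=d\bar d$ and $d^2=0$. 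Bidegree preservation is automatic since $\D$ commutes with the vertical translations (Proposition \ref{proposition_properties_tilde_D_d}(4) applied to the symplectomorphisms $\phi_\lambda$), so $\D$ indeed maps $\Omega^{n-k,k}\to\Omega^{n-(k-1),k-1}$.

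For the reverse inclusion I would argue as follows. Given $\sigma\in\ker d\cap\ker L\cap\Omega^{n-(k-1),k-1}$ with $k\ge 2$, I want to produce $\tau\in\Omega^{n-k,k}$ with $\D\tau=d\bar d\tau=\sigma$. Since $\sigma$ is closed and of total degree $n$, and the relevant cohomology is governed by $H^n_c(V)\cong\R$ realized by integration, I would first kill the cohomology class: because $\sigma$ lies in bidegree $(n-(k-1),k-1)$ with $k-1\ge 1$, a bidegree/fiber-integration argument shows $\int_V\sigma=0$ (the fiber integral picks out the purely horizontal $(n,0)$-part, which is absent here), so $\sigma=d\eta$ for some $\eta\in\Omega_{hc}^{n-1}(T^*V)$, and after averaging $\eta$ over the vertical translations one may take $\eta$ vertically translation invariant; its component in bidegree $(n-k,k)$ is the relevant piece. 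The key point is then to upgrade "$\sigma$ is $d$ of something" to "$\sigma$ is $\D=d\bar d$ of something in the right bidegree", which uses that $\sigma\in\ker L$: writing $\sigma=d\eta$ and decomposing $d\eta$ via Lefschetz, the condition $L\sigma=0$ forces the primitive part of $\eta$ at the appropriate spot to carry all of $d\eta$, and one checks $\bar d$ of that primitive part, followed by $d$, reproduces $\sigma$. I expect the bookkeeping with the Lefschetz decomposition across adjacent degrees (and the fact that $L:\Omega^{n-1}\to\Omega^{n+1}$ is an isomorphism) to be the technical heart; the main obstacle is verifying that the chosen primitive potential has horizontally compact support and the correct bidegree simultaneously, which is where the averaging over vertical translations and the explicit structure $\Omega^{k,l}=\Omega_c^k(V)\otimes\Lambda^l(V^*)^*$ must be used carefully.

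Finally, for $k=1$ the target is $\Omega^{n,0}=\{\pi^*(\phi\wedge\vol):\phi\in C^\infty_c(V)\}$, and I would compute $\D$ on $\Omega^{n-1,1}$ directly. An element of $\Omega^{n-1,1}$ has the form $\sum_i \beta_i\wedge dy_i$ with $\beta_i\in\Omega_c^{n-1}(V)$ (pulled back), and a short calculation of $d\bar d$ expresses $\D$ of it as $\pi^*(\phi\wedge\vol)$ where $\phi$ is built from the $\beta_i$ by first-order operations; the point is to identify exactly which $\phi$ arise. By Corollary \ref{corollary:smooth_dually_Dtau} and Corollary \ref{corollary_homogeneous_valuations_and_D}, a form $\tau\in\Omega^{n-1,1}$ induces a $1$-homogeneous dually epi-translation invariant valuation, which by Lemma \ref{lemma_alternative_form_valuation_with_D} is $f\mapsto\int_V f\phi\,d\vol$; such a valuation is well-defined on all of $\Conv(V,\R)$ (finite on affine functions and invariant under adding them) precisely when $\int_V\phi\,d\vol=0$ and $\int_V x_i\phi\,d\vol=0$ for all $i$, i.e. $\int_V\lambda(x)\phi(x)\,d\vol(x)=0$ for all $\lambda\in V^*$. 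Conversely, given such a $\phi$, one constructs an explicit primitive in $\Omega^{n-1,1}$: the vanishing moment conditions are exactly what is needed to solve $\bar d\tau = $ (a potential for $\phi\,\vol$) within compactly supported forms, using $H^n_c(V)\cong\R$ to handle the zeroth moment and an elementary argument (integration in the fiber / explicit antiderivative) for the first moments. I expect the two inclusions in the $k=1$ case to be routine once the $k\ge 2$ machinery is in place, with the only subtlety being the precise matching of moment conditions to solvability of the relevant equation with compact support.
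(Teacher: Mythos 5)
Your outline follows the same skeleton as the paper's proof (primitivity and closedness for the easy inclusion, $\ker L$ plus vanishing of compactly supported cohomology for surjectivity, moment conditions matching dual epi-translation invariance for $k=1$), but the two surjectivity arguments — which are the actual content of the proposition — are left as expectations, and the one concrete mechanism you do propose fails. You cannot "average $\eta$ over the vertical translations" to make a primitive vertically translation invariant: the translation group $V^*$ is noncompact, so there is no invariant probability measure and no such averaging. Moreover, even the preliminary claim that a closed $\sigma\in\Omega^{n-(k-1),k-1}$ with vanishing integral over the zero section is $d\eta$ for some $\eta\in\Omega^{n-1}_{hc}(T^*V)$ requires a cohomology-with-horizontally-compact-supports argument you do not supply. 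Both issues disappear if you exploit the tensor structure $\Omega^{a,b}=\Omega^a_c(V)\otimes\Lambda^b(V^*)^*$: writing $\sigma=\sum_i\xi_i\wedge\phi_i$ with $\phi_i\in\Omega^{n-k+1}_c(V)$ closed, the vanishing $H^{n-k+1}_c(V)=0$ (here $k\ge 2$ is used) yields compactly supported primitives componentwise, producing an invariant $\omega\in\Omega^{n-k,k-1}$ with $d\omega=\sigma$ directly. The step you call "the technical heart" and do not carry out is then a second application of the same device: $L\sigma=0$ gives $d(\omega_s\wedge\omega)=0$, and solving $d\tilde\tau=\omega_s\wedge\omega$ componentwise (again using $H^{n-k+1}_c(V)=0$) produces $\tilde\tau\in\Omega^{n-k,k}$ with $\bar d\tilde\tau=\omega$, hence $\D\tilde\tau=d\omega=\sigma$. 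Without these two explicit solvability steps your argument does not close.

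For $k=1$, the necessity of the moment conditions is fine and agrees with the paper, but the sufficiency is not "routine once the $k\ge 2$ machinery is in place" — indeed the $k\ge 2$ machinery breaks precisely here because $H^n_c(V)\cong\R\ne 0$, which is why the statement for $k=1$ has extra conditions. Asserting that the vanishing moments are "exactly what is needed" is the claim to be proved. Two ways to actually do it: (a) compute directly that for $\tau=\sum_i dy_i\wedge\psi_i$ with $d\psi_i=g_i\vol$ one has $\D\tau=\pm(\sum_i\partial_i g_i)\vol$, note that $\phi$ with $\int_V\phi\,d\vol=0$ can be written as such a divergence with $g_i\in C^\infty_c(V)$, and that integration by parts forces $\int_V g_j\,d\vol=\int_V x_j\phi\,d\vol$ for any such representation, so the first-moment conditions are exactly what allows $g_j\vol=d\psi_j$ with $\psi_j$ compactly supported; or (b) follow the paper: write $\phi\wedge\vol=d\omega$ with $\omega\in\Omega^{n-1}_c(V)$, check $\D(-\alpha\wedge\omega)=\phi\wedge\vol$, use the first-moment conditions (via evaluating the induced valuation $f\mapsto\int_V f\phi\,d\vol$ at the linear functions $x_i$ and the defining property of the differential cycle) to show the coefficients $\phi_i$ in $\alpha\wedge\omega=\sum_i y_i\phi_i\vol$ satisfy $\int_V\phi_i\,d\vol=0$, and then correct $-\alpha\wedge\omega$ by an exact form to obtain a representative in $\Omega^{n-1,1}$ with the same $\D$. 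As written, your proposal has a genuine gap at both surjectivity steps.
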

	\begin{proof}
		To keep the notation simple, let us suppress the pullbacks along the projections from $T^*V=V\times V^*$ onto the two factors.\\
		Let us start with the case $2\le k\le n$. Examining the degrees and using Corollary \ref{corollary:smooth_dually_Dtau} as well as Proposition \ref{proposition_properties_tilde_D_d}, we see that the image of $\D$ is contained in the space on the right. For the converse, let $\tau\in \ker d\cap \ker L\cap \Omega^{n-(k-1),k-1}$ be given. Choosing a basis $\xi_i$, $1\le i\le \binom{n}{k-1}$, of $\Lambda^{k-1}(V^*)^*$, we find differential forms $\phi_i\in\Omega_c^{n-(k-1)}(V)$ such that
		\begin{align*}
		\tau=\sum\limits_i\xi_i\wedge\phi_i.
		\end{align*}
		As $\tau$ is closed, $0=(-1)^{k-1}\sum_i\xi_i\wedge d\phi_i$, and thus $d\phi_i=0$ for all $i$. Using $H_c^{n-(k-1)}(V)=0$ for $2\le k\le n$, we see that there exists $\psi_i\in \Omega_c^{n-k}(V)$ such that $\phi_i=(-1)^{k-1}d\psi_i$. Set 
		\begin{align*}
		\omega:=\sum\limits_i\xi_i\wedge \psi_i,
		\end{align*}
		i.e. $\tau=d\omega$. Then $\omega_s\wedge\omega$ is closed, as $d(\omega_s\wedge\omega)=\omega_s\wedge d\omega=\omega_s\wedge \tau=0$, because $\tau$ belongs to the kernel of $L$. We will need to find a vertically translation invariant $n$-form $\tilde{\tau}$ such that $d\tilde{\tau}=\omega_s\wedge\omega$.\\
		Note that $\omega_s\wedge \omega\in\Omega^{n+1}(T^*V)$ is again a vertically translation invariant differential form, now of bidegree $(n-k+1,k)$. If $\tilde{\xi}_i$, $i=1,\dots,\binom{n}{k}$, denotes a basis of $\Lambda^{k}(V^*)^*$, there exist unique differential forms $\tilde{\phi}_i\in\Omega^{n-k+1}_c(V)$ such that 
		\begin{align*}
		\omega_s\wedge\omega=\sum\limits_i\tilde{\xi}_i\wedge\tilde{\phi}_i.
		\end{align*}
		As $\omega_s\wedge\omega$ is closed, we obtain $0=(-1)^k\sum_i\tilde{\xi}_i\wedge d\tilde{\phi}_i$, i.e. $d\tilde{\phi}_i=0$ for all $i$. Using $H_c^{n-k+1}(V)=0$ for $k\ne 1$, we obtain $\tilde{\psi}_i\in\Omega^{n-k}_c(V)$ such that $\tilde{\phi}_i=(-1)^kd\tilde{\psi}_i$. Then $\omega_s\wedge\omega=d\tilde{\tau}$, where
		\begin{align*}
		\tilde{\tau}=\sum\limits_i\tilde{\xi}_i\wedge\tilde{\psi_i}\in\Omega^{n-k,k}.
		\end{align*}
		Thus $\D\tilde{\tau}=d\omega=\tau$.\\
		
		For $k=1$, $\omega\in \Omega^{n-1,n}$ defines a $1$-homogeneous valuation in $\VConv(V)$, and $\D\omega\in \Omega^{n,0}$ is given by $\phi\wedge\vol$ for some $\phi\in C^\infty_c(V)$. Lemma \ref{lemma_alternative_form_valuation_with_D} implies
		\begin{align*}
			D(f)[\omega]=\int_V f(x)\phi(x) d\vol(x)\quad\forall f\in\Conv(V,\R).
		\end{align*}
		As $\omega\in\Omega^{n-1,1}$, it induces a dually epi-translation invariant valuation, so the left hand side of this equation vanishes for affine functions. Thus $\mathrm{Im}(\D:\Omega^{n-1,1}\rightarrow\Omega^{n,0})$ is contained in
		\begin{align*}
			\left\{\phi\wedge\vol:\phi\in C^\infty_c(V), \int_V\phi(x) d\vol(x)=\int_V\lambda(x)\phi(x) d\vol(x)=0\quad\forall \lambda\in V^*\right\}.
		\end{align*}
		
		For the converse direction, choose oriented orthonormal coordinates $x_1,\dots,x_n$ on $V$ with induced coordinates $y_1,\dots,y_n$ on $V^*$, and let $\phi\in C^\infty_c(V)$ be a function with $\int_V\phi(x) d\vol(x)=0$ and $\int_Vx_i\phi(x)d\vol(x)=0$ for $1\le i\le n$. Then $\phi\wedge\vol$ belongs to the trivial cohomology class of $H^n_c(V)\cong \R$, so there exists $\omega\in \Omega^{n-1}_c(V)$ such that $\phi\wedge \vol=d\omega$. Now $-\omega_s\wedge\omega=d\alpha\wedge\omega=d(\alpha\wedge\omega)-\alpha\wedge d\omega$. As $d\omega$ is a multiple of $\vol$, the second term vanishes, and we are left with $d\alpha\wedge \omega=d(\alpha\wedge\omega)$. In other words, $\bar{d}(\alpha\wedge\omega)=-\omega$. Thus $\D(-\alpha\wedge\omega)=d\omega=\phi\wedge\vol$. 
		Note that Lemma \ref{lemma_alternative_form_valuation_with_D} implies
		\begin{align*}
		D(f)[-\alpha\wedge\omega]=D(f)\left[f\D(-\alpha\wedge\omega)\right]=D(f)\left[f\phi\wedge \vol\right]=\int_V f(x)\phi(x)d\vol(x)
		\end{align*}
		for all $f\in\Conv(V,\R)\cap C^\infty(V)$. 
		As $\int_V\phi(x)d\vol(x)=\int_Vx_i\phi(x)d\vol(x)=0$ for all $1\le i\le n$, the valuation induced by $-\alpha\wedge\omega$ is thus dually epi-translation invariant. On the other hand, $\alpha\wedge\omega=\sum_{i=1}^{n}y_i\phi_i\vol$ for some $\phi_i\in C^\infty_c(V)$. The defining properties of the differential cycle imply
		\begin{align*}
		D(f)[-\alpha\wedge\omega]=-D(f)\left[\sum_{i=1}^{n}y_i\phi_i\vol\right]=-\sum_{i=1}^{n}\int_V \partial_i f(x)\phi_i(x)d\vol(x).
		\end{align*}
		Taking $f(x)=x_i$, we thus deduce  $0=\int_V \phi_i(x) d\vol(x)$ for all $i=1,\dots,n$, i.e. $\phi_i\wedge \vol$ is trivial in cohomology. Thus we can find $\psi_i\in \Omega^{n-1}_c(V)$ such that $d\psi_i=\phi_i\wedge\vol$. In total, $\alpha\wedge\omega=\sum_{i=1}^{n}y_id\psi_i=d(\sum_{i=1}^{n}y_i\psi_i)-\sum_{i=1}^{n}dy_i\wedge \psi_i$. Then $\tau:=\sum_{i=1}^{n}dy_i\wedge\psi_i\in\Omega^{n-1,1}$ satisfies \begin{align*}
			\D\tau=\D(-\alpha\wedge\omega+d(\sum_{i=1}^{n}y_i\psi_i))=-\D(\alpha\wedge\omega)=\phi\wedge\vol.
		\end{align*} Thus $\phi\wedge\vol$ is contained in the image of $\D:\Omega^{n-1}\rightarrow\Omega^{n,0}$.
	\end{proof}
	\begin{theorem}
		\label{theorem:Characterization_smooth_dually_epi_valuations}
		Let $\VConv_k(V)^{sm}\subset\VConv_k(V)$ denote the space of all $k$-homogeneous dually epi-translation invariant valuations of the form $f\mapsto D(f)[\tau]$ for some $\tau\in\Omega_{hc}^n(T^*V)$. Then the following holds:
		\begin{enumerate}
			\item The map $\Omega^{n-k,k}\rightarrow\VConv_k(V)^{sm}$, $\tau\mapsto D(\cdot)[\tau]$ is surjective for all $0\le k\le n$.
			\item For $2\le k\le n$, $\D$ induces an isomorphism
				\begin{align*}
					\VConv_k(V)^{sm}\cong& \mathrm{Im}(\D:\Omega^{n-k,k}\rightarrow\Omega^{n-(k-1),k-1})\\
					=&\ker d\cap \ker L\cap \Omega^{n-(k-1),k-1}.
				\end{align*}
			\item For $k=1$, $\D$ induces an isomorphism
			\begin{align*}
				&\VConv_1(V)^{sm}\cong 	\mathrm{Im}(\D:\Omega^{n-1,1}\rightarrow\Omega^{n,0})\\
				=&\left\{\pi^*(\phi\wedge\vol):\phi\in C^\infty_c(V), \int_V\phi(x) d\vol(x)=\int_V\lambda(x)\phi(x) d\vol(x)=0\quad \forall \lambda\in V^*\right\}.
			\end{align*}
		\end{enumerate}
	\end{theorem}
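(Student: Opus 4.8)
The plan is to deduce the theorem by combining three ingredients already available: the kernel theorem (Theorem~\ref{maintheorem_kernel_theorem}), the description of the image of $\D$ on vertically translation invariant forms (Proposition~\ref{proposition_desription_image_symp_D}), and the homogeneity/invariance bookkeeping of Corollaries~\ref{corollary:smooth_dually_Dtau}, \ref{corollary_homogeneous_valuations_and_D}, \ref{corollary_Dtau=0_euler_characteristic} and Lemma~\ref{lemma_alternative_form_valuation_with_D}. The elementary fact used throughout is that a vertically translation invariant form of bidegree $(p,q)$ with $q\ge 1$ restricts to zero on the zero section $V\hookrightarrow T^*V$, since each of its $q$ factors $dy_i$ pulls back to $0$ there; hence $\int_V\omega=0$ for every $\omega\in\Omega^{n-k,k}$ with $k\ge 1$, and likewise $\int_V\phi_\lambda^*\omega=\int_V m_t^*\omega=0$.

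For part (1), start with $\mu\in\VConv_k(V)^{sm}$ and any (not necessarily vertically translation invariant) $\tau\in\Omega_{hc}^n(T^*V)$ representing $\mu$. If $k=0$, Corollary~\ref{corollary_homogeneous_valuations_and_D} gives $\D\tau=0$, so $\mu\equiv\int_V\tau$ by Corollary~\ref{corollary_Dtau=0_euler_characteristic}, and any $\pi^*(\phi\wedge\vol)\in\Omega^{n,0}$ with $\int_V\phi\,d\vol=\int_V\tau$ represents $\mu$. If $k\ge 1$, then $\D\tau$ is vertically translation invariant (Corollary~\ref{corollary:smooth_dually_Dtau}) and $(k-1)$-homogeneous (Corollary~\ref{corollary_homogeneous_valuations_and_D}); a vertically translation invariant $n$-form that is $(k-1)$-homogeneous for the scaling $m_t$ is forced into bidegree $(n-(k-1),k-1)$, and it is closed and primitive (Proposition~\ref{proposition_properties_tilde_D_d}), so $\D\tau\in\ker d\cap\ker L\cap\Omega^{n-(k-1),k-1}$. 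For $k\ge 2$, Proposition~\ref{proposition_desription_image_symp_D} provides $\tilde\tau\in\Omega^{n-k,k}$ with $\D\tilde\tau=\D\tau$; for $k=1$, Lemma~\ref{lemma_alternative_form_valuation_with_D} first gives $\D\tau=\pi^*(\phi\wedge\vol)$ and $\mu(f)=\int_V f\phi\,d\vol$, from which $\int_V\phi\,d\vol=\int_V\lambda\phi\,d\vol=0$ because $\mu$ annihilates affine functions, and Proposition~\ref{proposition_desription_image_symp_D} again yields $\tilde\tau\in\Omega^{n-1,1}$ with $\D\tilde\tau=\D\tau$. In both cases $\D(\tau-\tilde\tau)=0$, so $D(\cdot)[\tau]-D(\cdot)[\tilde\tau]\equiv\int_V(\tau-\tilde\tau)$ by Corollary~\ref{corollary_Dtau=0_euler_characteristic}; but $\int_V\tilde\tau=0$ by the zero-section remark and $\int_V\tau=D(0)[\tau]=\mu(0)=0$ since $\mu$ is $k$-homogeneous with $k\ge 1$, so $D(\cdot)[\tilde\tau]=\mu$ and $\tilde\tau\in\Omega^{n-k,k}$ represents $\mu$.

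For parts (2) and (3), define a linear map $\D\colon\VConv_k(V)^{sm}\to\Omega^{n-(k-1),k-1}$ by $\mu\mapsto\D\tilde\tau$, where $\tilde\tau\in\Omega^{n-k,k}$ is any form representing $\mu$ (available by part (1)). This is well defined and injective: if $\tilde\tau,\tilde\tau'\in\Omega^{n-k,k}$ have $\D\tilde\tau=\D\tilde\tau'$, then $D(\cdot)[\tilde\tau]-D(\cdot)[\tilde\tau']\equiv\int_V(\tilde\tau-\tilde\tau')=0$ by Corollary~\ref{corollary_Dtau=0_euler_characteristic} and the zero-section remark (here $k\ge 1$), so $\tilde\tau$ and $\tilde\tau'$ represent the same valuation. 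Surjectivity onto $\mathrm{Im}(\D\colon\Omega^{n-k,k}\to\Omega^{n-(k-1),k-1})$ is immediate once one checks that $D(\cdot)[\tau]$ is automatically $k$-homogeneous and dually epi-translation invariant for every $\tau\in\Omega^{n-k,k}$; this follows from Corollaries~\ref{corollary:smooth_dually_Dtau} and~\ref{corollary_homogeneous_valuations_and_D}, using that $\D$ commutes with each symplectomorphism $\phi_\lambda$ and, up to the scalar from $m_t^*\omega_s=t\omega_s$, with $m_t$, together with $\int_V\phi_\lambda^*\tau=\int_V m_t^*\tau=0$ and $m_t^*\tau=t^k\tau$. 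Finally, Proposition~\ref{proposition_desription_image_symp_D} identifies this image with $\ker d\cap\ker L\cap\Omega^{n-(k-1),k-1}$ for $k\ge 2$ and with $\{\pi^*(\phi\wedge\vol):\phi\in C_c^\infty(V),\ \int_V\phi\,d\vol=\int_V\lambda\phi\,d\vol=0\ \text{for all }\lambda\in V^*\}$ for $k=1$, which gives the two isomorphisms.

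Since Proposition~\ref{proposition_desription_image_symp_D} (a compactly supported Poincar\'e-lemma argument, with the top-degree cohomological obstruction forcing the separate treatment of $k=1$) and the kernel theorem do the heavy lifting, I expect the only genuinely delicate point to be the constant ambiguity in part (1): replacing an arbitrary representing form $\tau$ by a vertically translation invariant $\tilde\tau$ with the same symplectic Rumin differential only pins down $D(\cdot)[\tilde\tau]$ up to an additive constant, and one must invoke both the zero-section remark and $k$-homogeneity for $k\ge 1$ — or, for $k=0$, prescribe $\int_V\phi\,d\vol$ directly — to see that this constant vanishes.
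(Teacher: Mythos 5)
Your proposal is correct and follows essentially the same route as the paper: reduce everything to Theorem \ref{maintheorem_kernel_theorem} together with Proposition \ref{proposition_desription_image_symp_D}, using Corollaries \ref{corollary:smooth_dually_Dtau} and \ref{corollary_homogeneous_valuations_and_D} (and Lemma \ref{lemma_alternative_form_valuation_with_D} for $k=1$) to see that $\D\tau$ is vertically translation invariant of the right homogeneity, then replace $\tau$ by $\tilde\tau\in\Omega^{n-k,k}$ with $\D(\tau-\tilde\tau)=0$ and kill the constant ambiguity via $\int_V\tau=\mu(0)=0=\int_V\tilde\tau$; parts (2) and (3) then follow since a homogeneous valuation of positive degree is determined by $\D$ of any representing form. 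Your write-up is in fact slightly more explicit than the paper (e.g.\ the bidegree argument and the check that forms in $\Omega^{n-k,k}$ do induce $k$-homogeneous dually epi-translation invariant valuations), but it is the same proof.
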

	\begin{proof}
		As any $k$-homogeneous valuation of degree $k>0$ vanishes in $0\in\Conv(V,\R)$, a valuation $\mu\in\VConv_k(V)^{sm}$ is uniquely determined by $\D\tau$, where $\tau$ is any smooth differential form representing $\mu$, due to Theorem \ref{maintheorem_kernel_theorem}. Thus 2. and 3. follow from 1. using Proposition \ref{proposition_desription_image_symp_D}.
		For $k=0$, the map in 1. is obviously surjective. Thus let $k>0$. Any $k$-homogeneous valuation represented by $\tau\in\Omega_{hc}^n(T^*V)$ vanishes in $0$, i.e. it satisfies $\int_V\tau=D(0)[\tau]=0$, and $\D\tau$ is vertically translation invariant as well $(k-1)$-homogeneous by Corollary \ref{corollary:smooth_dually_Dtau} and Corollary \ref{corollary_homogeneous_valuations_and_D}. For $k\ge 2$, this implies that $\D\tau$ belongs to the image of $\D:\Omega^{n-k,k}\rightarrow\Omega^{n-(k-1),k-1}$ by Proposition \ref{proposition_desription_image_symp_D}, so we find some $\tilde{\tau}\in\Omega^{n-k,k}$ with $\D(\tau-\tilde{\tau})=0$. Of course, any such differential form satisfies $\int_V\tilde{\tau}=0$, so Theorem \ref{maintheorem_kernel_theorem} implies that $\tilde{\tau}$ and $\tau$ induce the same valuation.\\
		For $k=1$, we need to show that $\D\tau=\pi^*(\phi\wedge \vol)$ is in the image of $\D:\Omega^{n-1,1}\rightarrow\Omega^ {n,0}$. Using Proposition \ref{proposition_desription_image_symp_D}, this follows from the fact that $D(\cdot)[\tau]$ is dually epi-translation invariant together with Lemma \ref{lemma_alternative_form_valuation_with_D}. With the same argument as before we find $\tilde{\tau}\in \Omega^{n-1,1}$ with $\D(\tau-\tilde{\tau})=0$ and $\int_V\tilde{\tau}=0=\int_V\tau$. Applying Theorem \ref{maintheorem_kernel_theorem} again, we obtain the desired result.
	\end{proof}

\subsection{Invariance under subgroups of the general linear group}
\label{section:Invariance_under_subgroup}
\begin{proposition}
	\label{proposition_invarant_valuations_and_D}
	Let $G\subset \GL(V)$ be a subgroup. Then $\tau\in\Omega^{n}_{hc}(T^*V)$ induces a $G$-invariant valuation if and only if
	\begin{enumerate}
		\item $g^*\D\tau=\sign(\det g)\D\tau$ for all $g\in G$,
		\item $\int_V\tau=\sign(\det g)\int_V (g^{-1})^*\tau$ for all $g\in G$.
	\end{enumerate}
\end{proposition}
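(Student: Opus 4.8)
The plan is to combine Fu's transformation law for the differential cycle under diffeomorphisms with the kernel theorem (Theorem~\ref{maintheorem_kernel_theorem}). Every $g\in G\subset\GL(V)$ is a linear, hence $C^{1,1}$, diffeomorphism of $V$, so Proposition~\ref{proposition_Fu_differential_cycle_and_diffeomorphisms} gives $D(f\circ g)=\sign(\det g)\,(g^\#)_*D(f)$ with $g^\#(x,y)=(g^{-1}x,\,y\circ g)$; hence, writing $\mu:=D(\cdot)[\tau]$,
\[
	\mu(f\circ g)=D(f\circ g)[\tau]=\sign(\det g)\,D(f)\big[(g^\#)^*\tau\big].
\]
Since $g^\#$ covers the linear map $g^{-1}$ on the base, $(g^\#)^*\tau$ again has horizontally compact support, so $\sigma_g:=\sign(\det g)(g^\#)^*\tau-\tau$ lies in $\Omega^n_{hc}(T^*V)$, and $\mu$ is $G$-invariant if and only if $D(f)[\sigma_g]=0$ for every $f\in\Conv(V,\R)$ and every $g\in G$.

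Next I would apply Theorem~\ref{maintheorem_kernel_theorem} to $\sigma_g$, for each fixed $g$: the vanishing above is equivalent to $\D\sigma_g=0$ together with $\int_V\sigma_g=0$. For the first condition, the structure of $T^*V$ enters through the observation that $g^\#$ is a symplectomorphism: a short computation shows $(g^\#)^*\alpha=\alpha$ for the tautological $1$-form, hence $(g^\#)^*\omega_s=\omega_s$, and Proposition~\ref{proposition_properties_tilde_D_d} then gives $\D\big((g^\#)^*\tau\big)=(g^\#)^*\D\tau$. Therefore $\D\sigma_g=\sign(\det g)(g^\#)^*\D\tau-\D\tau$, which vanishes exactly when $(g^\#)^*\D\tau=\sign(\det g)\D\tau$; as $G$ is a group this is condition~(1). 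For the second condition, $g^\#$ restricts on the zero section $V\hookrightarrow T^*V$ to the linear map $x\mapsto g^{-1}x$, so that $\int_V(g^\#)^*\tau=\int_V(g^{-1})^*\tau$, and $\int_V\sigma_g=0$ becomes $\int_V\tau=\sign(\det g)\int_V(g^{-1})^*\tau$, which is condition~(2). All of these equivalences run in both directions and the quantifier over $g\in G$ is preserved throughout, so this establishes both implications of the proposition.

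The work here is mostly bookkeeping; the only point requiring genuine care is the orientation sign $\sign(\det g)$, which appears both in Proposition~\ref{proposition_Fu_differential_cycle_and_diffeomorphisms} (because $g^\#$ reverses the orientation of the differential cycle when $\det g<0$) and in the change-of-variables formula used to identify the restriction of $(g^\#)^*\tau$ to the zero section. Keeping these two occurrences consistent — together with checking that $g^\#$ is a symplectomorphism so that Proposition~\ref{proposition_properties_tilde_D_d} is applicable — is the main obstacle, and once the sign conventions for $g^\#$ and for the orientation of $T^*V$ are fixed there is nothing deeper to overcome.
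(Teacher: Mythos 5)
Your proof is correct and follows essentially the same route as the paper: Fu's transformation law $D(f\circ g)=\sign(\det g)(g^\#)_*D(f)$, the kernel theorem applied to $\sign(\det g)(g^\#)^*\tau-\tau$, and the fact that $g^\#$ is a symplectomorphism so that $\D$ commutes with its pullback. The only (harmless) cosmetic differences are that you verify the symplectomorphism property and the horizontally compact support explicitly and pass from $(g^{-1})^*$ to $g^*$ via the group property rather than by applying $g^*$ to both sides.
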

\begin{proof}
	By Proposition \ref{proposition_Fu_differential_cycle_and_diffeomorphisms}, $D(f\circ g)=\sign(\det g)(g^{-1})_*D(f)$, where $g\in G$ operates on $T^*V$ by $g(x,y)=(gx, y\circ g^{-1})$, which is a symplectomorphism. Thus $f\mapsto \mu(f\circ g)$ is represented by the differential form $\sign(\det g)(g^{-1})^*\tau$. In particular, $\tau$ induces a $G$-invariant valuation if and only if $\tau$ and $\sign(\det g)(g^{-1})^*\tau$ induce the same valuation for all $g\in G$, which using Theorem \ref{maintheorem_kernel_theorem} is equivalent to 
	\begin{enumerate}
		\item $\D\tau=\sign(\det g)\D((g^{-1})^*\tau)$ for all $g\in G$,
		\item $\int_V\tau=\sign(\det g)\int_V (g^{-1})^*\tau$ for all $g\in G$.
	\end{enumerate}
	However, Proposition \ref{proposition_properties_tilde_D_d} implies $\D((g^{-1})^*\tau)=(g^{-1})^*\D\tau$ for $g\in G$, so the first condition is equivalent to $g^*\D\tau=\sign(\det(g))\D\tau$ for all $g\in G$. The claim follows.
\end{proof}
\begin{corollary}
	\label{corollary:1-homogeneous_invariant-valuation}
	Let $G\subset\mathrm{O}(n)$ be a closed subgroup of the orthogonal group that operates transitively on the unit sphere in $\R^n$, and let $\mu\in\VConv_1(\R^n)^{sm}$ be a smooth $G$-invariant valuation. Then $\mu$ is $\mathrm{O}(n)$-invariant.
\end{corollary}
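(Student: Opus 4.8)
The plan is to reduce the statement to the radial symmetry of a single compactly supported function, using the explicit description of $1$-homogeneous smooth valuations from the previous subsection. By Theorem \ref{theorem:Characterization_smooth_dually_epi_valuations}, if $\mu\in\VConv_1(\R^n)^{sm}$ is represented by $\tau\in\Omega_{hc}^n(T^*\R^n)$, then $\D\tau=\pi^*(\phi\wedge\vol)$ for some $\phi\in C^\infty_c(\R^n)$, and Lemma \ref{lemma_alternative_form_valuation_with_D} gives the integral representation $\mu(f)=\int_{\R^n}f(x)\phi(x)\,d\vol(x)$ for all $f\in\Conv(\R^n,\R)$. Thus everything comes down to showing that this density $\phi$ is radial.

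First I would translate $G$-invariance of $\mu$ into an invariance property of $\phi$. The most direct route is Proposition \ref{proposition_invarant_valuations_and_D}: since $\mu$ is $1$-homogeneous we have $\int_{\R^n}\tau=\mu(0)=0$, and the corresponding integral of $(g^{-1})^*\tau$ vanishes as well (again because $\mu(0)=0$), so condition (2) of that proposition is automatic; condition (1) reads $g^*\D\tau=\sign(\det g)\D\tau$. Because $g\in G$ acts on $T^*\R^n$ by $g(x,y)=(gx,y\circ g^{-1})$, one has $g^*\pi^*(\phi\wedge\vol)=\pi^*\big(\det(g)\,(\phi\circ g)\wedge\vol\big)$, and for $g\in\mathrm{O}(n)$ we have $\det g=\sign(\det g)$; hence condition (1) is equivalent to $\phi\circ g=\phi$ for all $g\in G$. (Alternatively, one may argue straight from $\mu(f\circ g)=\int f(gx)\phi(x)\,dx=\int f(y)\phi(g^{-1}y)\,dy$ together with the fact that every $h\in C^\infty_c(\R^n)$ is a difference of convex functions — add a large multiple of $|x|^2$ — to conclude $\phi\circ g=\phi$.)

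Next I would invoke transitivity: given $x,y\in\R^n$ with $|x|=|y|\neq 0$, transitivity of $G$ on the unit sphere yields $g\in G$ with $g(x/|x|)=y/|y|$, hence $gx=y$ by linearity, so $\phi(y)=\phi(gx)=\phi(x)$. Together with the trivial equality at the origin this shows $\phi$ is constant on every sphere centered at $0$, i.e. $\phi$ is radial, so $\phi\circ h=\phi$ for all $h\in\mathrm{O}(n)$. Consequently conditions (1) and (2) of Proposition \ref{proposition_invarant_valuations_and_D} hold with $G$ replaced by the whole group $\mathrm{O}(n)$, and we conclude that $\mu$ is $\mathrm{O}(n)$-invariant.

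I do not anticipate a genuine obstacle: the content is entirely carried by the representation formula for $\VConv_1(\R^n)^{sm}$ established above, and the two mild checks are (i) the passage from $G$-invariance of $\mu$ to pointwise $G$-invariance of $\phi$, handled by Proposition \ref{proposition_invarant_valuations_and_D}, and (ii) the orientation bookkeeping in $g^*\pi^*(\phi\wedge\vol)=\pi^*\big(\det(g)(\phi\circ g)\wedge\vol\big)$, where one must note that $G$ may contain orientation-reversing elements but the $\sign(\det g)$ factors cancel on both sides of the relevant identity.
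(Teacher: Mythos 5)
Your proposal is correct and follows essentially the same route as the paper: represent $\mu$ by a form with $\D\tau=\pi^*(\phi\wedge\vol)$, deduce $\phi\circ g=\phi$ from Proposition \ref{proposition_invarant_valuations_and_D}, use transitivity of $G$ on the sphere to conclude $\phi$ is radial, and finish with the integral formula $\mu(f)=\int_{\R^n}f\phi\,d\vol$ from Lemma \ref{lemma_alternative_form_valuation_with_D}, which is manifestly $\mathrm{O}(n)$-invariant. Your extra bookkeeping (the $\det(g)$ versus $\sign(\det g)$ cancellation and the vanishing of the integrals over the zero section) only makes explicit what the paper leaves implicit.
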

\begin{proof}
	Let $\tau\in\Omega^{n-1,1}$ be a differential form inducing $\mu$. By Proposition \ref{proposition_invarant_valuations_and_D}, $\D\tau$ satisfied $g^*\D\tau=\det(g)\D\tau$. Moreover, $\D\tau=\pi^*(\phi\wedge\vol)$ for some $\phi\in C^\infty_c(\R^n)$ by Proposition \ref{proposition_desription_image_symp_D}. As $G$-operates transitively on the unit sphere, $\phi$ is rotation invariant. Moreover, Lemma \ref{lemma_alternative_form_valuation_with_D} implies
	\begin{align*}
		D(f)[\tau]=\int_{\R^n}f(x)\phi(x)d\vol(x)\quad\forall f\in\Conv(\R^n,\R),
	\end{align*}
	which is obviously $\mathrm{O}(n)$-invariant as $\phi$ is rotation invariant.
\end{proof}

\section{Characterization of smooth valuations}
		\label{section_characterization_of_smooth_valuations}
		Let us choose a scalar product on $V$ with induced scalar products on $V^*\cong V$ and $V\times\R$. In addition, let us fix an orientation on $V$ (and thus $V^*$). If $\vol\in\Lambda^nV$ induces the orientation of $V^*$, we will equip $V^*\times\R$ with the orientation induced by $-dt\wedge \vol$, where $dt$ is the standard coordinate form on $\R$. Let us also choose orthonormal linear coordinates $x_1,\dots,x_n$ on $V$ with induced coordinates $(y_1,\dots,y_n)$ on $V^*$. Consider the  map
		\begin{align*}
		Q:(V^*\times \R)\times S(V\times\R)_-&\rightarrow V\times V^*=T^*V\\
		\left(y,s,(x,t)\right)&\mapsto \left(-\frac{x}{t},y\right).
		\end{align*}
		To simplify the notation, let $E:=V^*\times\R$, such that $Q:SE_-:=E\times S(E^*)_-\rightarrow T^*V$.
		\begin{proposition}
			\label{proposition_connection_normal_cycle_differential_cycle_under_restriction}
			Let $K\in\mathcal{K}(V^*\times\R)$. Then
			\begin{align*}
			Q_*\left[\CNC(K)|_{SE_-}\right]=D\left(h_K(\cdot,-1)\right)
			\end{align*}
		\end{proposition}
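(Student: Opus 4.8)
The plan is to verify the claimed identity by checking that the pushforward current $Q_*[\CNC(K)|_{SE_-}]$ satisfies the four characterizing properties of the differential cycle $D(h_K(\cdot,-1))$ listed in Theorem \ref{theorem_characterization_differential_cycle}, and then invoke uniqueness. Write $g:=h_K(\cdot,-1)\in\Conv(V,\R)$; the fact that this is indeed a finite-valued convex function is standard (it is the restriction of a support function to an affine hyperplane not through the origin). Since $\CNC(K)$ is an integral current and $Q$ restricted to the open set $SE_-$ is a smooth (in fact real-analytic) map that is proper on the relevant support — because the vertical component of $\CNC(K)$ lies in a fixed compact set and the $t$-coordinate stays bounded away from $0$ on $SE_-\cap\supp\CNC(K)$ once we localize over a compact subset of $V$ — the pushforward is a well-defined integral current of dimension $n$ in $T^*V$.

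The main work is organized as follows. First, closedness: $\partial\CNC(K)=0$, and $\CNC(K)|_{SE_-}=\CNC(K)\llcorner(V\times\R)\times S(E^*)_-$ has boundary supported on the "equator" $\{t=0\}$; one must check this boundary contribution pushes forward to zero, which should follow because $Q$ collapses directions there or because the Federer-style slicing argument shows the boundary term is lower-dimensional after pushforward. Second, the Lagrangian property $Q_*[\cdots]\llcorner\omega_s=0$: here I would use that $\CNC(K)$ annihilates the contact form $\alpha_E$ on the cosphere bundle $SE$ together with the fact that $Q^*\omega_s$ is, up to a factor, the pullback of $d\alpha_E$ along the inclusion $SE_-\hookrightarrow SE$; so $\CNC(K)\llcorner Q^*\omega_s$ vanishes by the Legendrian property of the conormal cycle. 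Third, vertical boundedness of $Q_*[\cdots]$ follows from the description of $\supp\CNC(K)\subset (\supp h_K\text{-type set})$ combined with the explicit form of $Q$, which sends $(x,t)$ with $t<0$ to $-x/t$, and one controls this using that the first component of the conormal cycle of a bounded body is bounded. Fourth and most importantly, the defining relation with $\pi^*\vol$: one must show
\begin{align*}
Q_*[\CNC(K)|_{SE_-}]\bigl(\phi(p,q)\,\pi^*\vol\bigr)=\int_V\phi(x,dg(x))\,d\vol(x)
\end{align*}
for all $\phi\in C_c^\infty(T^*V)$.

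I expect this last point to be the crux. The strategy is to reduce to the case of a smooth convex body $K$ with strictly positive Gauss curvature, where by Lemma \ref{lemma_conormal-cycle-smooth-body} the conormal cycle is the pushforward of $[S(E^*)]$ under $d'h_K\times\mathrm{Id}$, make the computation there by an explicit change of variables — parametrizing the lower hemisphere $S(E^*)_-$ by $V$ via the map $P$ (or its inverse) and checking that under $Q$ the relevant Jacobian exactly produces $\vol$ while the $V^*$-component becomes $dg(x)$, using that $d'h_K$ restricted to the hemisphere encodes the gradient of $g=h_K(\cdot,-1)$ — and then pass to general $K$ by the continuity of both $\CNC$ (Proposition \ref{proposition_continuity_conormal_cycle}) and $D$ (Theorem \ref{theorem_continuity_D_on_convex_functions}) under approximation by such smooth bodies, noting that $h_{K_j}(\cdot,-1)\to h_K(\cdot,-1)$ uniformly on compacta and that the mass and Lipschitz bounds needed for the flat-topology convergence are available from Lemma \ref{lemma_convex_functions_local_lipschitz_constants} and Lemma \ref{lemma_mass_estimate_Differential_cycle}. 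The bookkeeping of orientations — the sign conventions tying the orientation of $V^*\times\R$ via $-dt\wedge\vol$, the orientation of the conormal cycle, and the orientation of the differential cycle — is the delicate part, and the chosen sign in $Q$ together with the $-dt\wedge\vol$ convention is precisely what makes the identity come out with a $+$ rather than a $-$; I would track this carefully in the smooth case where everything is an honest oriented manifold.
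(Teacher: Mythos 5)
Your proposal is correct in substance, and its computational core coincides with the paper's proof: reduce to smooth bodies with positive Gauss curvature, use Lemma \ref{lemma_conormal-cycle-smooth-body} to write $\CNC(K)=(d'h_K\times Id)_*[S(E^*)]$, observe via $1$-homogeneity that $Q\circ(d'h_K\times Id)(x,t)=\left(-\tfrac{x}{t},dg\left(-\tfrac{x}{t}\right)\right)$ with $g=h_K(\cdot,-1)$, check that $(x,t)\mapsto -\tfrac{x}{t}$ is an orientation-preserving diffeomorphism of $S(E^*)_-$ onto $V$, and pass to general $K$ by continuity of $\CNC$ and of $D$. Where you diverge is the overall architecture: you wrap this computation inside a verification of Fu's four characterizing properties plus uniqueness (Theorem \ref{theorem_characterization_differential_cycle}), whereas the paper notes that the smooth-case computation already identifies \emph{both currents} as integration over the graph of $dg$ (not merely the pairing against $\phi\,\pi^*\vol$), so that continuity of the two sides finishes the proof with no need to check closedness, the Lagrangian condition, or vertical boundedness of the pushforward. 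Since you invoke exactly the same continuity statements anyway, your scaffolding is redundant; what it buys is only that you may test against the special forms $\phi\,\pi^*\vol$ in the smooth case, at the price of three extra verifications.

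Within that scaffolding, one justification is off and worth flagging: for closedness you suggest that the boundary contribution at the equator $\{t=0\}$ pushes forward to zero because ``$Q$ collapses directions there'' or because it becomes lower-dimensional. Near the equator $Q$ does the opposite of collapsing -- the first component $-\tfrac{x}{t}$ escapes to infinity, and $Q$ is neither defined nor proper there, so a pushforward of any equatorial boundary term is not even meaningful. The clean fix is the one implicit in the paper: interpret $\CNC(K)|_{SE_-}$ as a current on the open manifold $SE_-$, where it is already closed (test forms have compact support away from the equator), and use that $Q$ is proper on its support (boundedness of $y$ from $\supp\CNC(K)\subset K\times S(E^*)$ plus the constraint $\left|\tfrac{x}{t}\right|\le R$ forcing $t$ away from $0$), so $\partial Q_*=Q_*\partial$ applies directly. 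Your Lagrangian argument should likewise use the full identity $Q^*\omega_s=\tfrac{1}{t}\omega_E+\tfrac{1}{t^2}dt\wedge\alpha_E$ rather than ``$Q^*\omega_s$ is a multiple of $d\alpha_E$'', though the Legendrian property of $\CNC(K)$ does kill both terms. None of this affects the validity of your main computation, which, carried out as you describe, is the proof.
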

		\begin{proof}
			As $\supp \CNC(K)\subset K\times S(E^*)$, $Q$ is proper on the support of $\CNC(K)\big|_{SE_-}$. Now observe that both sides depend continuously on $K$ in the local flat metric topology by Proposition \ref{proposition_continuity_conormal_cycle} and Theorem \ref{theorem_continuity_D_on_convex_functions}. It is thus enough to prove the equation for $K\in\mathcal{K}(E)$ smooth with strictly positive Gauss curvature. In this case, the support function of $K$ is smooth outside of $0$ and
			\begin{align*}
			\CNC(K)=\left(d'h_K\times Id\right)_*\left[S(E^*)\right]
			\end{align*}
			by Lemma \ref{lemma_conormal-cycle-smooth-body}. We therefore need to consider the map
			\begin{align*}
			Q\circ \left(d'h_K\times Id\right):S(E^*)_{-}&\rightarrow V\times V^*\\
			(x,t)&\mapsto \left(-\frac{x}{t},\partial_1h_K(x,t)\right),
			\end{align*}
			where $\partial_1h_K=(\partial_{x_1}h_K,\dots,\partial_{x_n}h_K)$. $h_K$ is $1$-homogeneous, so $\partial_1h_K(x,t)=\partial_1h_K(-\frac{x}{t},-1)=df_K(-\frac{x}{t})$ for $t<0$, where $f_K:=h_K(\cdot,-1)$. Thus
			\begin{align*}
			Q\circ \left(d' h_K\times Id\right)(x,t)=\left(-\frac{x}{t},df_K\left(-\frac{x}{t}\right)\right)
			\end{align*}
			for all $(x,t)\in S(E^*)_-$. The map $S(E^*)_-\rightarrow V$, $(x,t)\mapsto -\frac{x}{t}$ is a diffeomorphism and it is easy to see that it is orientation preserving for our choice of orientation. As $D(h_K(\cdot,-1))$ is given by integration over the graph of $d f_K$, we see that both currents coincide.
		\end{proof}
		We will denote the contact form on $E\times S(E^*)$ by $\alpha_E$, $\omega_E:=-d\alpha_E$. Then $\alpha_{E}=tds+\sum_{i=1}^{n}x_idy_i$ with respect to the coordinates $(y,s,x,t)$ on $V^*\times\R\times S(V\times\R)=E\times S(E^*)$.
		\begin{lemma}
			\label{lemma_replacing_invariant_forms_on SE_by_pullbacks}
			Let $\omega\in\Omega^k(E\times S(E^*))$ be a translation invariant differential form. Then there exists a differential form $\omega'\in\Omega^k(T^*V)$ such that $\omega-Q^*\omega'$ is vertical on $E\times S(E^*)_-$, i.e. a multiple of the contact form $\alpha_{E}$.
		\end{lemma}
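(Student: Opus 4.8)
The plan is to exploit the coordinate description of $Q$ together with the structure of the contact form $\alpha_E = t\,ds + \sum_i x_i\,dy_i$. First I would restrict attention to the open subset $E\times S(E^*)_-$ and choose adapted coordinates there: since $t<0$ on $S(E^*)_-$, the $n$ functions $u_i := -x_i/t$ together with $(y_1,\dots,y_n,s)$ and one further coordinate (say $t$ itself, or equivalently the normalization constraint defining $S(E^*)$) give a coordinate system. In these coordinates the map $Q$ reads $(y,s,u,t)\mapsto (u,y)$, i.e. it is simply the projection forgetting $s$ and $t$. Thus $Q^*$ produces exactly the forms on $E\times S(E^*)_-$ that involve only $du_i$ and $dy_i$ and have coefficients independent of $s$ and $t$.

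The key step is then to show that, modulo the ideal generated by $\alpha_E$, any translation invariant $\omega$ can be brought into this form. Translation invariance in the $E=V^*\times\R$ directions means the coefficients of $\omega$ (in the coordinates $y,s,x,t$) do not depend on $(y,s)$; they may still depend on the sphere coordinates, i.e. on $(x,t)$ subject to $|x|^2+t^2=1$, equivalently on $u=-x/t$ alone once we are on $S(E^*)_-$ (the fibre coordinate can be absorbed since $0$-homogeneity in $(x,t)$ lets us rescale to $t=-1$). So the coefficients descend to functions of $u$. It remains to eliminate the differentials $ds$ and $dt$. Writing $\alpha_E = t\,ds + \sum_i x_i\,dy_i$, on $S(E^*)_-$ one can solve for $ds$ in terms of $\alpha_E$ and the $dy_i$ (since $t\neq 0$); hence any occurrence of $ds$ in $\omega$ can be replaced, at the cost of an $\alpha_E$-term, by a combination of the $dy_i$. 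Similarly $dt$ can be expressed via $du_i = -\,dx_i/t + (x_i/t^2)\,dt$ and the sphere relation $\sum x_i\,dx_i + t\,dt = 0$, which together let one trade $dx_i$ and $dt$ for $du_i$ (again up to lower-order adjustments that only involve $dy,du$). After these substitutions $\omega$ becomes $\alpha_E\wedge(\text{something}) + \omega_0$, where $\omega_0$ is built only from $du_i,dy_i$ with coefficients depending only on $u$; this $\omega_0$ is manifestly $Q^*\omega'$ for the corresponding form $\omega'$ on $T^*V$ (note $u$ is exactly the $V$-coordinate and $y$ the $V^*$-coordinate on $T^*V$), and we take $\omega':=\omega_0$ read as a form on $T^*V$.

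The main obstacle I anticipate is bookkeeping the substitution $dt,dx_i \rightsquigarrow du_i$ cleanly, since the sphere $S(E^*)$ is $n$-dimensional and the change of frame from $(dx_1,\dots,dx_n,dt)$ (constrained) to $(du_1,\dots,du_n)$ is not quite a linear change of coordinates — it mixes in the $x_i$ and $t$ as coefficients, which themselves are $0$-homogeneous functions of $u$. One must check that after clearing $ds$ using $\alpha_E$, the remaining form's coefficients, once expressed in the $u$-frame, genuinely depend on $u$ only (and not secretly on the fibre direction of $S(E^*)_-$). This follows from $0$-homogeneity of $\omega$'s coefficients in $(x,t)$ combined with translation invariance, but it is the point where one has to be careful. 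Everything else — that the leftover $\alpha_E$-multiple is automatically vertical by definition, and that $Q$ is proper on the relevant supports so pullbacks make sense — is routine.
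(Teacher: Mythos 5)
Your proposal is correct and takes essentially the same route as the paper: split off the $ds$-terms and replace $ds$ by $-\frac{1}{t}\sum_i x_i\,dy_i$ modulo $\alpha_E$ (legitimate since $t<0$ on $S(E^*)_-$), then observe that the remaining translation invariant, $ds$-free form is the pullback under $Q$ of a form on $T^*V$. The $du_i$-bookkeeping you worry about is automatic in the paper's formulation: $Q$ factors as the projection forgetting $s$ followed by the diffeomorphism $\tilde{Q}:V^*\times S(E^*)_-\rightarrow T^*V$, $(y,(x,t))\mapsto(-\frac{x}{t},y)$, which is precisely the statement that $u=-x/t$ is a global chart on $S(E^*)_-$, so any smooth form there is already expressed in $du_i,dy_i$ with coefficients smooth in $u$.
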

		\begin{proof}
			Any translation invariant differential form $\omega$ on $E \times S(E^*)$ can be written as a sum of terms of the form $ds\wedge dy^I\wedge \tau$ or $dy^I\wedge \tau$, where $\tau$ is a form on $S(V\times\R)$ of degree $k-|I|-1$ or $k-|I|$ respectively. As $\alpha_{E}=tds+\sum_{j=1}^{n}x_jdy_j$, we can replace $ds$ by $-\frac{1}{t}\sum_{j=1}^{n}x_jdy_j$ while picking up a multiple of $\alpha_{E}$.\\
			We can thus assume that $\omega$ only consists of terms of the form $dy^I\wedge \tau$ with $\tau\in \Omega^*(S(V\times\R))$, i.e. $\omega$ is the pullback of a form $\tilde{\omega}$ on $ V^*\times S(V\times\R)$. Obviously, $\tilde{Q}:V^*\times S(V\times\R)_-\rightarrow T^*V$, $(y,(x,t))\mapsto (-\frac{x}{t},y)$ is a diffeomorphism, so if we denote by $\tilde{\pi}:V^*\times\R\times S(V\times\R)_-\rightarrow V^*\times S(V\times\R)_-$ the obvious projection, we see that $\tilde{Q}\circ \tilde{\pi}=Q$. The claim follows by setting $\omega':=(\tilde{Q}^{-1})^*\tilde{\omega}$.
		\end{proof}
		Due to Theorem \ref{theorem_kernel_theorem_convex_bodies} and Theorem \ref{maintheorem_kernel_theorem}, a smooth valuation is (up to its $0$-homogeneous component) uniquely defined by the (symplectic) Rumin operator of a representing form. We will thus need the following relation between the two versions of the differential.
		\begin{corollary}
			\label{corollary_connection_D_tilde_D}
			For any smooth differential form $\tau\in\Omega^n(T^*V)$, $DQ^*\tau=-\frac{1}{t}\alpha_E\wedge Q^*\D\tau$.
		\end{corollary}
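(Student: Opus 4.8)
The plan is to produce by hand the unique vertical correction form entering the definition of the contact Rumin operator $D$ on the $(2n+1)$-dimensional contact manifold $E\times S(E^*)$, restricted to the open set $SE_-$ on which $Q$ is defined and smooth; note that $t<0$ there, so $-\tfrac1t$ is a well-defined smooth positive function. Recall that on $T^*V$ the symplectic Rumin operator is $\D\tau=d\bar{d}\tau$ with $\bar{d}\tau=L^{-1}d\tau$, i.e.\ $\bar{d}\tau$ is the unique $(n-1)$-form with $\omega_s\wedge\bar{d}\tau=d\tau$ (here $L$ is invertible by Proposition~\ref{proposition_Lefschetz-decomposition}). Both sides of the asserted identity have degree $n+1$, which is a first sanity check.

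The one genuine computation I would carry out is the pointwise identity
\[
Q^*\omega_s=d\!\left(-\tfrac1t\,\alpha_E\right)\qquad\text{on }SE_-.
\]
This is checked by pulling back the tautological form: since $\alpha=\sum_i y_i\,dx_i$ on $T^*V$ and $Q$ pulls the coordinate functions back to $y_i$ and $-x_i/t$, one gets $Q^*\alpha=-\tfrac1t\sum_i y_i\,dx_i+\tfrac1{t^2}\bigl(\sum_i x_iy_i\bigr)dt$, and differentiating this and comparing with $\alpha_E=t\,ds+\sum_i x_i\,dy_i$, $d\alpha_E=dt\wedge ds+\sum_i dx_i\wedge dy_i$ gives $-dQ^*\alpha=d(-\tfrac1t\alpha_E)$, hence the claim since $\omega_s=-d\alpha$.

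With this in hand the remainder is formal. Set $\eta:=Q^*\bar{d}\tau\in\Omega^{n-1}(SE_-)$ and take the candidate correction $\xi:=\tfrac1t\,\alpha_E\wedge\eta$, which is a multiple of $\alpha_E$ and hence vertical. Pulling back $d\tau=\omega_s\wedge\bar{d}\tau$ gives $d(Q^*\tau)=d(-\tfrac1t\alpha_E)\wedge\eta$, and a one-line Leibniz computation (writing $\gamma:=-\tfrac1t\alpha_E$, so $\xi=-\gamma\wedge\eta$ and $d(\gamma\wedge\eta)=d\gamma\wedge\eta-\gamma\wedge d\eta$) yields
\[
d(Q^*\tau+\xi)=\gamma\wedge d\eta=-\tfrac1t\,\alpha_E\wedge d\eta=-\tfrac1t\,\alpha_E\wedge Q^*\D\tau,
\]
using $d\eta=dQ^*\bar{d}\tau=Q^*d\bar{d}\tau=Q^*\D\tau$; the right-hand side is again a multiple of $\alpha_E$, hence vertical. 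Since $D(Q^*\tau)$ is by definition $d(Q^*\tau+\xi')$ for the \emph{unique} vertical $\xi'$ making $d(Q^*\tau+\xi')$ vertical, uniqueness forces $\xi'=\xi$ and the corollary follows.

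The main obstacle is just getting the identity $Q^*\omega_s=d(-\tfrac1t\alpha_E)$ exactly right — every sign and the factor $-\tfrac1t$ must match, and one has to keep in mind that $x_i,t$ are the (sphere-constrained) coordinate functions on $S(E^*)$ rather than independent variables, although this does not affect the formal computation. A secondary point is to note explicitly that the operator $D$ here is the middle-degree contact Rumin operator on the $(2n+1)$-dimensional contact manifold $E\times S(E^*)$, defined by the vertical-correction recipe recalled in Section~\ref{section_construction_smooth_valuations_co-normal_cycle}, so that the uniqueness of $\xi'$ used in the last step is available.
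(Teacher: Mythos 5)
Your proof is correct and follows essentially the same route as the paper: you use the identical vertical correction form $\tfrac1t\alpha_E\wedge Q^*\bar{d}\tau$, and your key identity $Q^*\omega_s=d(-\tfrac1t\alpha_E)$ is just a repackaging of the paper's pullback formula $Q^*\omega_V=\tfrac1t\omega_E+\tfrac1{t^2}dt\wedge\alpha_E$, after which the same Leibniz computation and the uniqueness in the definition of the Rumin operator give the result.
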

		\begin{proof}
			Let $\omega_{V}$ denote the symplectic form on $T^*V$. A short calculation shows $Q^*\omega_{V}=\frac{1}{t}\omega_{E}+\frac{1}{t^2}dt\wedge \alpha_{E}$. Let $\xi\in\Omega^{n-1}(T^*V)$ be the unique form with $\omega_{V}	 \wedge\xi=d\tau$. Pulling back this equation, we see that
			\begin{align*}
			dQ^*\tau=Q^*\omega_{V}\wedge Q^*\xi=\frac{1}{t}\omega_E\wedge Q^*\xi+\frac{1}{t^2}dt\wedge \alpha_E\wedge Q^*\xi.
			\end{align*}
			Restricting this equation to the contact distribution $H$ in $E\times S(E^*)$, we obtain 
			\begin{align*}
			dQ^*\tau|_H=\frac{1}{t}\omega_E\wedge Q^*\xi|_H=\omega_E|_H\wedge \frac{1}{t}Q^*\xi|_H.
			\end{align*}
			This implies
			\begin{align*}
				d(Q^*\tau+\alpha_E\wedge \frac{1}{t}Q^*\xi)=&dQ^*\tau-\omega_E\wedge \frac{1}{t}Q^*\xi-\alpha_E\wedge d(\frac{1}{t}Q^*\xi)\\
				=&-\frac{1}{t^2}\alpha_E\wedge dt\wedge Q^*\xi+\frac{1}{t^2}\alpha_E\wedge dt\wedge Q^*\xi-\alpha_E\wedge \frac{1}{t}dQ^*\xi\\
				=&-\frac{1}{t}\alpha_E\wedge Q^*d\xi=-\frac{1}{t}\alpha_E\wedge Q^*\D\tau,
			\end{align*}
			which is vertical. Thus $D(Q^*\tau)=d(Q^*\tau+\alpha_E\wedge \frac{1}{t}Q^*\xi)=-\frac{1}{t}\alpha_E\wedge Q^*\D\tau$. 
		\end{proof}		
		
		\begin{proposition}
			\label{proposition_characterization_smooth_valuations_VConv}
			Let $\mu\in\VConv_k(V)$ be a valuation such that $T(\mu)\in\Val_k(V^*\times\R)^{sm}$. Then there exists a differential form $\tau\in\Omega^{n-k,k}$ such that
			\begin{align*}
				\mu(f)=D(f)[\tau]\quad \forall f\in\Conv(V,\R).
			\end{align*}
			In particular $\mu\in\VConv_k(V)^{sm}$ if and only if $T(\mu)\in\Val_k(V^*\times\R)^{sm}$.
		\end{proposition}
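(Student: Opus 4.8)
The plan is to reduce the proposition to the existence of a representing form in $\Omega^{n-k,k}$. One half of the stated equivalence, namely $\mu\in\VConv_k(V)^{sm}\Rightarrow T(\mu)\in\Val_k(V^*\times\R)^{sm}$, follows from Theorem~\ref{theorem:Characterization_smooth_dually_epi_valuations}: writing $\mu=D(\cdot)[\tau']$ with $\tau'\in\Omega^{n-k,k}$, Proposition~\ref{proposition_connection_normal_cycle_differential_cycle_under_restriction} gives $T(\mu)[K]=D(h_K(\cdot,-1))[\tau']=\CNC(K)|_{SE_-}[Q^*\tau']$, and since $\tau'$ is horizontally compactly supported, $Q^*\tau'$ is supported away from the equator $\{t=0\}$ and extends by zero to a translation invariant form on $E\times S(E^*)$, so $T(\mu)$ is smooth by Theorem~\ref{theorem_representation_smooth_val_conv_bodies_using_normal_cycle}. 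The other half, together with the first assertion, is what requires work. So assume $T(\mu)\in\Val_k(E)^{sm}$ with $E=V^*\times\R$; the case $k=0$ is trivial, so let $1\le k\le n$. By Theorem~\ref{theorem_representation_smooth_val_conv_bodies_using_normal_cycle} there is a translation invariant $\omega\in\Omega^{k,n-k}(E\times S(E^*))^{tr}$ with $T(\mu)[K]=\CNC(K)[\omega]$ for all $K\in\mathcal{K}(E)$, and by Theorem~\ref{theorem_image_VCONV->Val} together with Proposition~\ref{proposition_characterization_support_GW_smooth_valuation}, the set $\pi_2(\supp D\omega)=\vsupp T(\mu)$ is a compact subset of the open lower hemisphere $S(E^*)_-$, hence bounded away from the equator. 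Finally, Lemma~\ref{lemma_replacing_invariant_forms_on SE_by_pullbacks} produces $\omega'\in\Omega^n(T^*V)$, which by inspection of its proof is vertically translation invariant and of bidegree $(n-k,k)$, with $\omega-Q^*\omega'$ vertical on $SE_-$.

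The key step is to show that $\D\omega'$ is horizontally compactly supported. Since the contact Rumin operator annihilates vertical forms, Corollary~\ref{corollary_connection_D_tilde_D} gives $D\omega=D(Q^*\omega')=-\tfrac1t\,\alpha_E\wedge Q^*\D\omega'$ on $SE_-$. Contracting with the vector field along the $\R$-factor of $E$ (which lies in $\ker dQ$) shows that $\alpha_E\wedge Q^*\D\omega'$ vanishes exactly where $Q^*\D\omega'$ does; combining this with the fact that $\supp D\omega$ projects into a compact subset of $S(E^*)_-$ and that $Q\colon SE_-\to T^*V$ is a surjective submersion sending the region near the equator off to infinity in the base direction, I expect to conclude $\supp\D\omega'\subset\pi^{-1}(S_0)$ for some compact $S_0\subset V$. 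Then $\D\omega'$ is vertically translation invariant, horizontally compactly supported, closed (being exact), primitive, and of bidegree $(n-k+1,k-1)$, i.e. $\D\omega'\in\ker d\cap\ker L\cap\Omega^{n-(k-1),k-1}$. For $2\le k\le n$, Proposition~\ref{proposition_desription_image_symp_D} identifies this space with $\mathrm{Im}(\D\colon\Omega^{n-k,k}\to\Omega^{n-(k-1),k-1})$, so there is $\tau\in\Omega^{n-k,k}$ with $\D\tau=\D\omega'$.

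For $k=1$ the cohomological obstruction in Proposition~\ref{proposition_desription_image_symp_D} is of a different nature, and I would instead use the classical description of smooth $1$-homogeneous valuations on $\mathcal{K}(E)$: $T(\mu)[K]=\int_{S(E^*)}h_K\,\rho$ for some $\rho\in C^\infty(S(E^*))$ with $\supp\rho=\supp\GW(T(\mu))=\vsupp T(\mu)\subset S(E^*)_-$ compact, hence supported away from the equator. Pulling back along $P$ and using $h_K(P(x))=(1+|x|^2)^{-1/2}h_K(x,-1)$ yields $\mu(f)=\int_V f(x)\tilde\phi(x)\,d\vol(x)$ for all $f=h_K(\cdot,-1)$ with $\tilde\phi\in C^\infty_c(V)$; by density of such functions this holds for all $f\in\Conv(V,\R)$, and since $\mu$ is $1$-homogeneous and dually epi-translation invariant we get $\mu(\lambda+c)=\mu(0)=0$, forcing $\int_V\tilde\phi\,d\vol=\int_V\lambda\tilde\phi\,d\vol=0$ for all $\lambda\in V^*$. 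Proposition~\ref{proposition_desription_image_symp_D} then gives $\tau\in\Omega^{n-1,1}$ with $\D\tau=\pi^*(\tilde\phi\wedge\vol)$, and Lemma~\ref{lemma_alternative_form_valuation_with_D} shows $D(f)[\tau]=\int_V f\tilde\phi\,d\vol=\mu(f)$ for all $f$, finishing this case. For $2\le k\le n$ it remains to verify $\mu=D(\cdot)[\tau]$ for the $\tau$ found above; both sides are continuous valuations, so it suffices to check this on the dense family $\{h_K(\cdot,-1):K\in\mathcal{K}(E)\}$ (density: the supremum of the affine minorants of $f$ over a large ball is such a function and converges to $f$ uniformly on compacta). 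For $f_K=h_K(\cdot,-1)$, Proposition~\ref{proposition_connection_normal_cycle_differential_cycle_under_restriction} gives $D(f_K)[\tau]=\CNC(K)|_{SE_-}[Q^*\tau]=\CNC(K)[\widetilde{Q^*\tau}]$, the extension by zero being smooth since $\tau$ is horizontally compact. The translation invariant forms $\widetilde{Q^*\tau}$ and $\omega$ have equal fiber integrals (both zero, as $k\ge1$) and equal contact Rumin operators: on $SE_-$, $D\widetilde{Q^*\tau}=-\tfrac1t\alpha_E\wedge Q^*\D\tau=-\tfrac1t\alpha_E\wedge Q^*\D\omega'=D\omega$ by Corollary~\ref{corollary_connection_D_tilde_D}, and both vanish elsewhere. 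By Theorem~\ref{theorem_kernel_theorem_convex_bodies} they induce the same valuation on $\mathcal{K}(E)$, whence $D(f_K)[\tau]=\CNC(K)[\omega]=T(\mu)[K]=\mu(f_K)$. Thus $\mu=D(\cdot)[\tau]$ with $\tau\in\Omega^{n-k,k}$, so $\mu\in\VConv_k(V)^{sm}$ and the equivalence follows.

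I expect the main obstacle to be the key step of the second paragraph: upgrading the qualitative fact that $\supp D\omega$ stays away from the equator to genuine horizontal compactness of $\supp\D\omega'$, which forces one to control the interaction of the two Rumin operators through the singular change of variables $Q$. The moment conditions in the $k=1$ case are a secondary point, cleanly circumvented via the density description of $1$-homogeneous valuations rather than any form-level computation.
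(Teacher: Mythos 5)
Your argument is correct, and its backbone coincides with the paper's proof: represent $T(\mu)$ by a translation invariant $\omega$, replace it modulo $\alpha_E$ by $Q^*\omega'$ via Lemma \ref{lemma_replacing_invariant_forms_on SE_by_pullbacks}, compare the two Rumin operators through Corollary \ref{corollary_connection_D_tilde_D}, use $\vsupp T(\mu)=\pi_2(\supp D\omega)\subset S(E^*)_-$ (Theorem \ref{theorem_image_VCONV->Val} plus Proposition \ref{proposition_characterization_support_GW_smooth_valuation}) to force horizontal compactness of $\supp\D\omega'$, solve $\D\tau=\D\omega'$ with Proposition \ref{proposition_desription_image_symp_D}, and finish with Theorem \ref{theorem_kernel_theorem_convex_bodies} and Proposition \ref{proposition_connection_normal_cycle_differential_cycle_under_restriction}. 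The step you flag as the main obstacle is exactly the paper's argument and goes through as you sketch it: since $Q$ is a submersion, $\supp Q^*\D\omega'=Q^{-1}(\supp\D\omega')$, your contraction with $\partial_s$ (equivalently the paper's observation that the $ds$-component and the $\sum x_i dy_i$-component of $-\tfrac1t\alpha_E\wedge Q^*\D\omega'$ are independent) shows $\supp Q^*\D\omega'\subset\supp D\omega\subset E\times\vsupp T(\mu)$, and pushing forward under $Q$ gives $\supp\D\omega'\subset\pi^{-1}(S_0)$ with $S_0=\{-x/t:(x,t)\in\vsupp T(\mu)\}$ compact. Your concluding step via density of $\{h_K(\cdot,-1)\}$ is only cosmetically different from the paper's appeal to injectivity of $T$.

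The one genuine divergence is the case $k=1$. The paper keeps working with $\omega'$ and verifies the two moment conditions on $\phi$ (where $\D\omega'=\pi^*(\phi\vol)$) by a first-variation computation, evaluating $\CNC(\{\lambda\})[\omega]$ via Proposition \ref{proposition:formula_first_variation} and transporting it through $Q$ and Proposition \ref{proposition_connection_normal_cycle_differential_cycle_under_restriction}. You instead bypass $\omega'$ entirely and use the representation $T(\mu)[K]=\int_{S(E^*)}h_K\,\rho$ with $\rho$ smooth and $\supp\rho=\vsupp T(\mu)\subset S(E^*)_-$, pull back along $P$ to get $\mu(f)=\int_V f\tilde\phi\,d\vol$, and read off the moment conditions from dual epi-translation invariance of $\mu$ itself. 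This is a clean and somewhat more elementary route; its only cost is the ``classical'' input, which is not stated in the paper but follows immediately from its own toolkit, namely equation \eqref{equation:GW_conormal_cycle} (or Proposition \ref{proposition:formula_first_variation}) applied with $K=\{0\}$, which exhibits $\GW(T(\mu))$ as the smooth density $i_RD\omega$ restricted to $\{0\}\times S(E^*)$; if you spell that out, your treatment of $k=1$ is complete. In exchange, the paper's computation stays entirely at the level of differential forms and needs no representation of the Goodey--Weil distribution by a density, while yours avoids the slightly delicate manipulation of $-\tfrac1t h_{\{\lambda\}}$ against $\CNC(\{0\})$.
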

		\begin{proof}
			We may assume that $1\le k\le n$. As $T(\mu)$ is a smooth valuation, it can be represented by a smooth translation invariant differential form $\omega\in E\times S(E^*)$. Using Lemma \ref{lemma_replacing_invariant_forms_on SE_by_pullbacks}, we can find a differential form $\omega'\in\Omega^n(T^*V)$ such that $\omega-Q^*\omega'$ differ by a multiple of $\alpha_E$ on $E\times S(E^*)_-$. Applying the Rumin operator and using Corollary \ref{corollary_connection_D_tilde_D}, we obtain
			\begin{align*}
				D\omega=DQ^*\omega'=-\frac{1}{t}\alpha_E\wedge Q^*\D\omega'=-ds\wedge Q^*D\omega'-\frac{1}{t}\sum_{i=1}^{n}x_idy_i\wedge Q^*\D\omega' \quad \text{on }E\times S(E^*)_-.
			\end{align*}
			Note that $Q^*\D\omega'$ does not contain a multiple of $ds$, so the two terms on the right hand side of this equation are linearly independent.
			By Theorem \ref{theorem_image_VCONV->Val}, the vertical support of $T(\mu)$ is compactly contained in $S(E^*)_-$. From Proposition \ref{proposition_characterization_support_GW_smooth_valuation} we deduce that $D\omega$ has support compactly contained in $E\times S(E^*)_-$, so the same applies to $ds\wedge Q^*\D\omega'$ and thus $Q^*\D\omega'$. 
			Thus the support of $\D\omega'$ is horizontally compact. By construction, this is a vertically translation invariant form of bidegree $(n+1-k,k-1)$. For $2\le k\le n$, we can directly apply Proposition \ref{proposition_desription_image_symp_D} to find a vertically translation invariant form $\tau\in\Omega^{n-k,k}$ such that $\D\tau=\D\omega'$. To apply Proposition \ref{proposition_desription_image_symp_D} for $k=1$, note that $\D\omega'=\pi^*(\phi\wedge\vol)$ for some $\phi\in C^\infty_c(V)$, so we have to show that $\phi$ is orthogonal to affine functions. As $\omega$ induces a translation invariant valuation that is $1$-homogeneous,
			\begin{align*}
				0=\CNC(\{\lambda\})[\omega]=\frac{d}{dt}\Big|_0\CNC(\{0\}+t\{\lambda\})[\omega]=\CNC(\{0\})[h_{\{\lambda\}}(x,t) i_RD\omega]\quad \text{for }\lambda\in E,
			\end{align*}
			where we have used Proposition \ref{proposition:formula_first_variation} in the last step. As support functions are $1$-homogeneous,
			\begin{align*}
				-\frac{1}{t}h_{L}(x,t)=-\sign(t)h_L\left(\frac{x}{|t|},\sign(t)\right)\quad\text{for } (x,t)\in V\times\R=E^*, t\ne 0
			\end{align*}
			for $L\in\mathcal{K}(E)$. Because the conormal cycle vanishes on multiples of $\alpha_E$, 
			\begin{align*}
				0=&\CNC(\{0\})[h_{\{\lambda\}}(x,t) i_RD\omega]=\CNC(\{0\})\left[-\frac{1}{t}h_{\{\lambda\}}(x,t)[Q^*\D\omega'-\alpha_E\wedge i_RQ^*\D\omega']\right]\\
				=&\CNC(\{0\})\left[-\frac{1}{t}h_{\{\lambda\}}(x,t)Q^*\D\omega'\right]=\CNC(\{0\})\left[-\sign(t)\lambda\left(\frac{x}{|t|},\sign(t)\right) Q^*\D\omega'\right].
			\end{align*}
			As the support of $Q^*\D\omega'$ is contained in $SE_-$, we obtain
			\begin{align*}
				0=&\CNC(\{0\})|_{SE_-}\left[-\sign(t)\lambda\left(\frac{x}{|t|},\sign(t)\right) Q^*\D\omega'\right]=\CNC(\{0\})|_{SE_-}\left[\lambda\left(-\frac{x}{t},-1\right) Q^*\D\omega'\right]\\
				=&\CNC(\{0\})|_{SE_-}\left[ Q^*(f_\lambda\D\omega')\right]
			\end{align*}
			for $f_\lambda(x):=\lambda(x,-1)$. Proposition \ref{proposition_connection_normal_cycle_differential_cycle_under_restriction} thus implies
			\begin{align*}
				0=D(0)[\pi^*\lambda(\cdot,-1)D\omega']=D(0)[\pi^*(\lambda(\cdot,-1)\cdot \phi\vol)]=\int_V\lambda(x,-1)\phi(x)d\vol(x).
			\end{align*}
			As this holds for all $\lambda\in E=V^*\times\R$, $\phi$ is orthogonal to every affine function, so Proposition \ref{proposition_desription_image_symp_D} implies that we find can $\tau\in\Omega^{n-1,1}$ such that $\D\tau=\pi^*(\phi\vol)=\D\omega'$.\\
			
			It remains to see that $\mu$ is represented by the differential form $\tau$. Observe that
			\begin{align*}
				D\omega=-\frac{1}{t}\alpha_E\wedge Q^*\D\omega'=-\frac{1}{t}\alpha_E\wedge Q^*\D\tau=D(Q^*\tau)\quad \text{on }E\times S(E^*)_-.
			\end{align*}
			By extending $Q^*\tau$ trivially to $E\times S(E^*)$, we see that this equation holds on the whole space, so Theorem \ref{theorem_kernel_theorem_convex_bodies} implies that $\omega$ and $Q^*\tau$ induces the same valuation (note that the second property in Theorem \ref{theorem_kernel_theorem_convex_bodies} is satisfied as the degree of our valuation is positive). In particular
			\begin{align*}
				T(\mu)[K]=&\CNC(K)[\omega]=\CNC(K)[Q^*\tau]=\left(\CNC(K)\big|_{ SE_-}\right)\left[Q^*\tau\right]\\
				=&D\left(h_K(\cdot,-1)\right)[\tau]=T\left(D(\cdot)[\tau]\right)(K)
			\end{align*}
			for any $K\in\mathcal{K}(V^*\times\R)$, where we have used Proposition \ref{proposition_connection_normal_cycle_differential_cycle_under_restriction}. The injectivity of $T$ implies $\mu=D(\cdot)[\tau]$.\\
			It remains to see that any valuation $\mu\in\VConv(V)^{sm}$ satisfies $T(\mu)\in\Val(V^*\times\R)^{sm}$. This follows directly from Proposition \ref{proposition_connection_normal_cycle_differential_cycle_under_restriction} and the characterization of $\Val(V^*\times\R)^{sm}$ in Theorem \ref{theorem_representation_smooth_val_conv_bodies_using_normal_cycle}.
		\end{proof}
		We will now prove a refinement of Theorem \ref{maintheorem_density_smooth_valuations}.
		\begin{theorem}
			\label{theorem_density_smooth_valuations}
			$\VConv_k(V)^{sm}$ is sequentially dense in $\VConv_k(V)$. More precisely, the following holds: For every compact set $A\subset V$, $\mu\in\VConv_{k,A}(V)$, and every compact neighborhood $B\subset V$ of $A$, there exists a sequence $(\mu_j)_j$ in $\VConv_{k,B}(V)\cap \VConv(V)^{sm}$ such that $(\mu_j)_j$ converges to $\mu$.
		\end{theorem}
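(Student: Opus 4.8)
The case $k=0$ is immediate: $\VConv_0(V)$ consists of the constant valuations, each such valuation $f\mapsto c$ equals $D(f)[\pi^*(\phi\wedge\vol)]$ for any $\phi\in C^\infty_c(V)$ with $\int_V\phi(x)d\vol(x)=c$ by the defining properties of the differential cycle (compare Corollary \ref{corollary_Dtau=0_euler_characteristic}), so it already lies in $\VConv_0(V)^{sm}$, and its support is empty. We may thus assume $1\le k\le n$. The strategy is to transport the problem to translation invariant valuations on $\mathcal{K}(V^*\times\R)$ via the embedding $T$, apply the approximation result Proposition \ref{proposition_approximation_by_smooth_val_in_Val_with_restrictions_on_vsupp} there, and transport the resulting sequence back.

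Let $A\subset V$ be compact, $B$ a compact neighborhood of $A$, and $\mu\in\VConv_{k,A}(V)$. By Theorem \ref{theorem_image_VCONV->Val} we have $T(\mu)\in\Val_{k,P(A)}(V^*\times\R)$, and $T$ restricts to a topological isomorphism of Banach spaces between $\VConv_{k,A'}(V)$ and $\Val_{k,P(A')}(V^*\times\R)$ for every compact $A'\subset V$. Since $A$ lies in the interior of $B$ and $P$ is a diffeomorphism of $V$ onto the open subset $S(V\times\R)_-\subset S(V\times\R)$, the set $P(\mathrm{int}\,B)$ is an open neighborhood of $P(A)$ in $S(V\times\R)$. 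By local compactness of $S(V\times\R)$ we may choose a compact neighborhood $B'$ of $P(A)$ in $S(V\times\R)$ with
\begin{align*}
P(A)\subset B'\subset P(\mathrm{int}\,B)\subset P(B);
\end{align*}
in particular $B'\subset S(V\times\R)_-$, and $P^{-1}(B')\subset B$.

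Now apply Proposition \ref{proposition_approximation_by_smooth_val_in_Val_with_restrictions_on_vsupp} on $V^*\times\R$ with the pair $(P(A),B')$: there is a sequence $(\nu_j)_j$ in $\Val_{k,B'}(V^*\times\R)\cap\Val(V^*\times\R)^{sm}$ converging to $T(\mu)$ in $\Val_k(V^*\times\R)$. As $\vsupp\nu_j\subset B'\subset S(V\times\R)_-$, Theorem \ref{theorem_image_VCONV->Val} provides $\mu_j\in\VConv_k(V)$ with $T(\mu_j)=\nu_j$ and $\supp\mu_j=P^{-1}(\vsupp\nu_j)\subset P^{-1}(B')\subset B$, i.e. $\mu_j\in\VConv_{k,B}(V)$. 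Since $T(\mu_j)=\nu_j$ is a smooth valuation, Proposition \ref{proposition_characterization_smooth_valuations_VConv} yields $\mu_j\in\VConv_k(V)^{sm}$, so $\mu_j\in\VConv_{k,B}(V)\cap\VConv(V)^{sm}$.

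It remains to check the convergence $\mu_j\to\mu$. The subspace $\Val_{k,P(B)}(V^*\times\R)$ is closed in $\Val_k(V^*\times\R)$ by Corollary \ref{corollary_Val_subspaces_compact_support_are_Banach}, and each $\nu_j$ as well as $T(\mu)$ lies in it (recall $B'\subset P(B)$); hence $\nu_j\to T(\mu)$ in the Banach space $\Val_{k,P(B)}(V^*\times\R)$. Applying the topological isomorphism $T^{-1}:\Val_{k,P(B)}(V^*\times\R)\to\VConv_{k,B}(V)$ from Theorem \ref{theorem_image_VCONV->Val} gives $\mu_j\to\mu$ in $\VConv_{k,B}(V)$, and a fortiori in $\VConv_k(V)$. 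This proves the refined statement. The first assertion follows by taking $A:=\supp\mu$, which is compact by \cite{Knoerr:support_of_dually_epi-translation_invariant_valuations} Corollary 6.2, and $B$ any compact neighborhood of it. The only point requiring attention is the choice of the intermediate neighborhood $B'$: it must simultaneously stay inside $S(V\times\R)_-$ (so that the approximants lie in the image of $T$) and inside $P(B)$ (so that the support constraint is preserved), which is exactly what the openness of $S(V\times\R)_-$ and of $P(\mathrm{int}\,B)$ guarantee.
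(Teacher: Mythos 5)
Your proof is correct and follows essentially the same route as the paper: transfer $\mu$ via $T$ to $\Val_{k,P(A)}(V^*\times\R)$, approximate there by smooth valuations with vertical support controlled by Proposition \ref{proposition_approximation_by_smooth_val_in_Val_with_restrictions_on_vsupp}, and pull back using Theorem \ref{theorem_image_VCONV->Val} and Proposition \ref{proposition_characterization_smooth_valuations_VConv}. The only deviation is your intermediate neighborhood $B'$, which is harmless but unnecessary, since $P(B)$ is itself already a compact neighborhood of $P(A)$ inside the open set $S(V\times\R)_-$, which is what the paper uses directly.
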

		\begin{proof}
			Let $\mu\in \VConv_{k,A}(V)$ be given and consider the following commutative diagram with the map $T:\VConv_k(V)\rightarrow\Val_k(V^*\times\R)$ and diffeomorphism 
			\begin{align*}
				P:V&\rightarrow S(V\times\R)_-\\
				x&\mapsto \frac{1}{\sqrt{1+|x|^2}}(x,-1)
			\end{align*}
			from Section \ref{section_dually_epi-translation_invariant_valuations}:
				\begin{center}
					\begin{tikzpicture}
					\matrix (m) [matrix of math nodes,row sep=3em,column sep=4em,minimum width=2em]
					{
						\VConv_{k,A}(V)& \Val_{k,P(A)}(V^*\times\R) \\
						\VConv_{k,B}(V) & \Val_{k,P(B)}(V^*\times\R) \\};
					\path[-stealth]
					(m-1-1) edge node [left] {$ $} (m-2-1)
					edge node [above] {$T$} (m-1-2)
					(m-2-1.east|-m-2-2) edge node [above] {$T$} (m-2-2)
					(m-1-2) edge node [right] {$ $} (m-2-2);
					\end{tikzpicture}
				\end{center}
			The vertical maps are the natural inclusions, while the horizontal maps are topological isomorphisms due to Theorem \ref{theorem_image_VCONV->Val}. As $P$ is a diffeomorphism, $P(B)$ is a compact neighborhood of $P(A)$, so using Proposition \ref{proposition_approximation_by_smooth_val_in_Val_with_restrictions_on_vsupp}, we can find a sequence $(\mu_j)_j$ in $\Val_{k,P(B)}(V^*\times\R)\cap \Val(V^*\times\R)^{sm}$ such that $(\mu_j)_j$ converges to $T(\mu)$. Then $(T^{-1}(\mu_j))_j$ is a sequence in $\VConv_{k,B}(V)$ that converges to $\mu$ in $\VConv_{k,B}(V)$ and by Proposition \ref{proposition_characterization_smooth_valuations_VConv}, $T^{-1}(\mu_j)\in \VConv(V)^{sm}$. The claim follows.
		\end{proof}

	\subsection{Application to invariant valuations}	
		\label{section:Application_to_invariant_valuations}	
		\begin{proposition}
			\label{proposition:density_invariant_valuations}
			Let $G\subset \GL(V)$ be a compact subgroup. Then the space of smooth $G$-invariant valuations is sequentially dense in the space of all $G$-invariant valuations in $\VConv_k(V)$.
		\end{proposition}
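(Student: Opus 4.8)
The plan is to use the standard averaging trick from representation theory, now over the compact group $G$: approximate $\mu$ by a smooth (but a priori non-invariant) valuation via Theorem \ref{theorem_density_smooth_valuations}, and then average over $G$ with respect to the normalized Haar measure to restore $G$-invariance without destroying smoothness.

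Concretely, let $\mu\in\VConv_k(V)$ be $G$-invariant. By Theorem \ref{theorem_density_smooth_valuations} there is a sequence $(\nu_j)_j$ in $\VConv_k(V)^{sm}$ with $\nu_j\to\mu$ in $\VConv_k(V)$; write $\nu_j=D(\cdot)[\tau_j]$ for some $\tau_j\in\Omega_{hc}^n(T^*V)$. Set
\begin{align*}
\mu_j(f):=\int_G\nu_j(f\circ g)\,dg,\qquad f\in\Conv(V,\R),
\end{align*}
where $dg$ is the Haar probability measure on $G$. Since $G$ is compact and $g\mapsto\nu_j(f\circ g)$ is continuous, $\mu_j$ is well defined; integrating the valuation identity together with the relations $(f+\lambda+c)\circ g=f\circ g+\lambda\circ g+c$ and $(tf)\circ g=t(f\circ g)$ shows that $\mu_j$ is again a continuous, $k$-homogeneous, dually epi-translation invariant valuation. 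Left-invariance of the Haar measure gives $\mu_j(f\circ h)=\mu_j(f)$ for all $h\in G$, so $\mu_j$ is $G$-invariant, and $\mu=\int_G\mu(f\circ g)\,dg$ since $\mu$ is already $G$-invariant and $dg$ has mass one.

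To see that $\mu_j$ is smooth I would use Proposition \ref{proposition_Fu_differential_cycle_and_diffeomorphisms}: as $g\in\GL(V)$ is a linear, hence $C^{1,1}$, diffeomorphism, $\nu_j(f\circ g)=\sign(\det g)\,D(f)\bigl[(g^\#)^*\tau_j\bigr]$ with $g^\#(x,y)=(g^{-1}x,y\circ g)$, so $f\mapsto\nu_j(f\circ g)$ is represented by the form $\sigma_{j,g}:=\sign(\det g)(g^\#)^*\tau_j$. The assignment $g\mapsto\sigma_{j,g}$ is smooth, and since $G$ is compact the union $\bigcup_{g\in G}\supp\sigma_{j,g}$ is still horizontally compact; hence $\tilde\tau_j:=\int_G\sigma_{j,g}\,dg\in\Omega_{hc}^n(T^*V)$ is a well-defined smooth form. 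For fixed $f$, the current $D(f)$ has finite mass over compact subsets and acts continuously on $n$-forms supported in a fixed compact neighbourhood of $\supp D(f)\cap\pi^{-1}\bigl(\bigcup_g g^{-1}K_j\bigr)$, where $\supp\tau_j\subset\pi^{-1}(K_j)$, so it commutes with this Bochner integral and $\mu_j(f)=D(f)[\tilde\tau_j]$. Thus $\mu_j\in\VConv_k(V)^{sm}$, and it is $G$-invariant by the previous paragraph.

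For convergence, given a compact set $\mathcal{C}\subset\Conv(V,\R)$, the set $\mathcal{C}':=\{f\circ g:f\in\mathcal{C},\,g\in G\}$ is the image of $\mathcal{C}\times G$ under the continuous map $(f,g)\mapsto f\circ g$ (continuity is elementary, using the local Lipschitz bounds of Lemma \ref{lemma_convex_functions_local_lipschitz_constants}), hence compact, and
\begin{align*}
\sup_{f\in\mathcal{C}}|\mu_j(f)-\mu(f)|=\sup_{f\in\mathcal{C}}\left|\int_G(\nu_j-\mu)(f\circ g)\,dg\right|\le\sup_{h\in\mathcal{C}'}|(\nu_j-\mu)(h)|\longrightarrow 0
\end{align*}
as $j\to\infty$, because $\nu_j\to\mu$ uniformly on the compact set $\mathcal{C}'$. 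Hence $\mu_j\to\mu$ in $\VConv_k(V)$, which proves the proposition. The only steps needing care are the commutation of $D(f)$ with the integral over $G$ — which rests on horizontal compactness of the $\sigma_{j,g}$ and the local finiteness of the mass of $D(f)$ — and the routine continuity of $(f,g)\mapsto f\circ g$; I do not expect a genuine obstacle, as no tools beyond those already developed are required. If desired one may also track supports: since $\supp\mu$ is $G$-invariant, one can take the neighbourhood $B$ in Theorem \ref{theorem_density_smooth_valuations} to be $G$-invariant, which then yields $\mu_j\in\VConv_{k,B}(V)$ as well.
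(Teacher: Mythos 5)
Your proposal is correct and follows essentially the same strategy as the paper: approximate $\mu$ by smooth valuations via Theorem \ref{theorem_density_smooth_valuations}, average over $G$ with respect to the Haar probability measure, and use Proposition \ref{proposition_Fu_differential_cycle_and_diffeomorphisms} to identify the averaged valuation with the one induced by the averaged differential form, so that smoothness and $G$-invariance hold simultaneously. The only genuine divergence is the convergence step: the paper first reduces to $G\subset\mathrm{O}(n)$, keeps all approximants supported in a fixed compact neighbourhood $B_R$ of $\supp\mu$ (which is preserved by $G$ thanks to Proposition \ref{proposition_support_convex_valuation}), and then works in the Banach space $\VConv_{k,B_R}(\R^n)$ of Proposition \ref{proposition_Banach_structures_subspaces_VConv}, using the uniform boundedness principle to see that averaging does not destroy convergence; you instead argue directly in the topology of uniform convergence on compact subsets of $\Conv(V,\R)$, observing that $\mathcal{C}':=\{f\circ g: f\in\mathcal{C},\ g\in G\}$ is compact and estimating $\sup_{\mathcal{C}}|\mu_j-\mu|$ by $\sup_{\mathcal{C}'}|\nu_j-\mu|$. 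Both arguments are valid; yours is more elementary and avoids any support control or Banach-space machinery, while the paper's version automatically yields invariant approximants with supports in a prescribed compact neighbourhood of $\supp\mu$ — a refinement you also recover in your closing remark by choosing the neighbourhood $B$ in Theorem \ref{theorem_density_smooth_valuations} to be $G$-invariant.
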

		\begin{proof}
			Without loss of generality we can assume that $G$ is a subgroup of $\mathrm{O}(n)$, $V=\R^n$. Let $\mu\in\VConv_k(\R^n)$ be a $G$-invariant valuation and let $R>0$ be such that $B_R$ is a neighborhood of $\supp\mu$. From Proposition \ref{proposition_support_convex_valuation}, it is easy to deduce that $G$ maps an element of $\VConv_{B_R}(\R^n)$ to an element of the same space. Using Theorem \ref{theorem_density_smooth_valuations}, choose a sequence $(\mu_j)_j$ of smooth valuations converging to $\mu$ such that the supports of the valuations $\mu_j$ are all contained in $B_R$. By Theorem \ref{theorem:Characterization_smooth_dually_epi_valuations}, each $\mu$ can be represented by a vertically translation invariant differential form $\tau_j$ with horizontally compact support. By averaging $\mu_j$ with respect to the Haar measure, we obtain a $G$-invariant valuation $\tilde{\mu}_j\in\VConv_{B_R}(\R^n)$. We claim that this valuation is induced by the differential form $\tilde{\tau}_j$ obtained by averaging $g\mapsto \sign(\det g)(g^{-1})^*\tau_j$ with respect to the Haar measure. Using the relation $D(f\circ g)[\tau]=\sign(\det g) D(f)[(g^{-1})^*\tau]$ from Proposition \ref{proposition_Fu_differential_cycle_and_diffeomorphisms}, this is easily verified. Thus $\tilde{\mu}_j$ is a smooth $G$-invariant valuation.\\
			It is easy to see that $G$ acts continuously on $\VConv_{B_R}(\R^n)$, i.e. the map 
			\begin{align*}
				G\times \VConv_{B_R}(\R^n)&\rightarrow \VConv_{B_R}(\R^n)\\
				(g,\mu)&\mapsto [f\mapsto (g\cdot \mu)(f):=\mu(f\circ g)]
			\end{align*}
			is continuous. As $\VConv_{B_R}(\R^n)$ is a Banach space due to Proposition \ref{proposition_Banach_structures_subspaces_VConv}, the principle of uniform boundedness implies that there exists $C>0$ such that
			\begin{align*}
				\|g\cdot\mu\|\le C\|\mu\|\quad\forall g\in G, \mu\in\VConv_{B_R}(\R^n)
			\end{align*}
			(in fact, $G$ acts by isometries, see the definition of the norms in \cite{Knoerr:support_of_dually_epi-translation_invariant_valuations}), were $\|\cdot\|$ denotes the corresponding norm on $\VConv_{B_R}(\R^n)$.  We thus obtain
			\begin{align*}
				\left\|\mu-\tilde{\mu}_j\right\|=&\left\|\int_G g\cdot(\mu -\mu_j)dg\right\|\le \int_G\left\|g\cdot(\mu-\mu_j)\right\|dg\le\int_GC\left\|\mu-\mu_j\right\|dg
				=C\left\|\mu-\mu_j\right\|.
			\end{align*}
			Thus $(\tilde{\mu}_j)_j$ is a sequence of smooth $G$-invariant valuations converging to $\mu$.
		\end{proof}
		Note that the argument actually shows the following
		\begin{corollary}
			If $G\subset \GL(V)$ is a compact subgroup, then any $G$-invariant valuation $\mu\in\VConv_k(V)^{sm}$ can be represented by a $G$-invariant differential form $\tau\in \Omega^{n-k,k}$.
		\end{corollary}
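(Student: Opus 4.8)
The plan is to symmetrise a representing form over the compact group $G$; this is essentially the computation already carried out in the proof of Proposition~\ref{proposition:density_invariant_valuations}, specialised to a valuation that is from the outset both smooth and $G$-invariant. By part~(1) of Theorem~\ref{theorem:Characterization_smooth_dually_epi_valuations} there is some $\tau\in\Omega^{n-k,k}$ with $\mu(f)=D(f)[\tau]$ for all $f\in\Conv(V,\R)$. Writing the action of $g\in G$ on $T^*V$ as in Proposition~\ref{proposition_invarant_valuations_and_D}, namely $g\cdot(x,y)=(gx,y\circ g^{-1})$, I would set
\begin{align*}
	\tilde\tau:=\int_G\sign(\det g)\,(g^{-1})^*\tau\;dg,
\end{align*}
the integral being taken against the normalised Haar measure on $G$. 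One first checks that this average makes sense and again lies in $\Omega^{n-k,k}$: each $g$ acts on $T^*V=V\times V^*$ linearly and block-diagonally with respect to this splitting, so it preserves the bidegree and vertical translation invariance, and it maps $\pi^{-1}(K)$ to $\pi^{-1}(gK)$, so it preserves horizontal compactness of the support; since $G$ is compact, all the forms $(g^{-1})^*\tau$ have support in one fixed set $\pi^{-1}(G\cdot K)$ and $g\mapsto(g^{-1})^*\tau$ is smooth into $\Omega^{n-k,k}$, so the integral converges in the $C^\infty$-topology.

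Next I would show that $\tilde\tau$ still represents $\mu$. By Proposition~\ref{proposition_Fu_differential_cycle_and_diffeomorphisms}, $D(f\circ g)[\tau]=\sign(\det g)\,D(f)[(g^{-1})^*\tau]$ for every $g\in\GL(V)$; since the current $D(f)$, restricted to forms supported in the fixed compact set above, is continuous, it may be passed under the Haar integral, and using the $G$-invariance of $\mu$ one obtains
\begin{align*}
	D(f)[\tilde\tau]=\int_G\sign(\det g)\,D(f)[(g^{-1})^*\tau]\;dg=\int_G D(f\circ g)[\tau]\;dg=\int_G\mu(f\circ g)\;dg=\mu(f)
\end{align*}
for all $f\in\Conv(V,\R)$. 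For the invariance of $\tilde\tau$, I would fix $h\in G$, move $h^*$ under the integral, rewrite $h^*(g^{-1})^*\tau$ as the pullback under the action of $g^{-1}h$, and substitute $u=g^{-1}h$; using that the Haar measure of a compact group is invariant under left translations and under inversion, this yields $h^*\tilde\tau=\sign(\det h)\,\tilde\tau$ for all $h\in G$. This is exactly the notion of a \emph{$G$-invariant} representing form adapted to the oriented setting here --- it reduces to literal invariance $h^*\tilde\tau=\tilde\tau$ when every element of $G$ has positive determinant --- and, together with $\int_V\tilde\tau=\int_V\tau$ (checked by the same substitution), it is precisely the pair of conditions appearing in Proposition~\ref{proposition_invarant_valuations_and_D}.

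I do not expect a genuine obstacle here: the statement is a direct averaging argument, and indeed the proof of Proposition~\ref{proposition:density_invariant_valuations} already contains all the needed ingredients. The only points that require a little care are the two bookkeeping issues above, namely (i) justifying that the Haar average is a legitimate operation on $\Omega^{n-k,k}$ (convergence in $C^\infty$, stability of the subspace and of horizontal compactness under the $G$-action, and the admissibility of exchanging $D(f)$ with the integral), and (ii) carrying the orientation sign $\sign(\det g)$ from Proposition~\ref{proposition_Fu_differential_cycle_and_diffeomorphisms} consistently through both the identity $D(\cdot)[\tilde\tau]=\mu$ and the invariance computation $h^*\tilde\tau=\sign(\det h)\,\tilde\tau$.
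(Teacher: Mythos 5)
Your argument is correct and is essentially the paper's own proof: the corollary is obtained by specialising the averaging step in the proof of Proposition \ref{proposition:density_invariant_valuations}, i.e.\ averaging $g\mapsto \sign(\det g)(g^{-1})^*\tau$ over the Haar measure and using Proposition \ref{proposition_Fu_differential_cycle_and_diffeomorphisms} together with the $G$-invariance of $\mu$ to see that the average still represents $\mu$. Your remark that the resulting form satisfies the sign-twisted invariance $h^*\tilde\tau=\sign(\det h)\,\tilde\tau$ (matching the conditions of Proposition \ref{proposition_invarant_valuations_and_D}) is exactly the notion of invariance intended in the corollary, so there is nothing to add.
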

	
		\begin{proof}[Proof of Theorem \ref{maintheorem_classification_transitive_group_1-hom_case}]
			Obviously any $\mathrm{O}(n)$-invariant valuation is $G$-invariant. For the converse inclusion, observe that any $G$-invariant, smooth,  $1$-homogeneous valuation is $\mathrm{O}(n)$-invariant by Corollary \ref{corollary:1-homogeneous_invariant-valuation}. As the operation of $\mathrm{O}(n)$ on $\VConv_1(\R^n)$ is continuous and smooth $G$-invariant valuations are dense in the space of all continuous $G$-invariant valuations by Proposition \ref{proposition:density_invariant_valuations}, the general case thus follows by approximation.
		\end{proof}

\bibliography{literature}

\Addresses
\end{document}